\definecolor{todo-background-color}{gray}{0.95}
\def\@settitle{%
  \vspace*{-0pt}
  \begin{flushleft}%
%    \baselineskip14\p@\relax
    \LARGE\bfseries
%    \uppercasenonmath\@title
    \strut\@title\strut
  \end{flushleft}%
}
\def\@setauthors{%
  \begingroup
  \def\thanks{\protect\thanks@warning}%
  \trivlist
  %\centering
  \raggedright
  \large \@topsep27\p@\relax
  \advance\@topsep by -\baselineskip
  \item\relax
  \author@andify\authors
  \def\\{\protect\linebreak}%
%  \MakeUppercase{\authors}%
  \authors
  \ifx\@empty\contribs
  \else
    ,\penalty-3 \space \@setcontribs
    \@closetoccontribs
  \fi
  \normalfont
% \@setaddresses
  \endtrivlist
  \endgroup
}
\def\@setaddresses{\par
  \nobreak \begingroup
  \small\raggedright
  \def\author##1{\nobreak\addvspace\smallskipamount}%
  \def\\{\unskip, \ignorespaces}%
  \interlinepenalty\@M
  \def\address##1##2{\begingroup
    \par\addvspace\bigskipamount\noindent
    \@ifnotempty{##1}{(\ignorespaces##1\unskip) }%
    {\ignorespaces##2}\par\endgroup}%
  \def\curraddr##1##2{\begingroup
    \@ifnotempty{##2}{\nobreak\noindent\curraddrname
      \@ifnotempty{##1}{, \ignorespaces##1\unskip}\/:\space
      ##2\par}\endgroup}%
  \def\email##1##2{\begingroup
    \@ifnotempty{##2}{\nobreak\noindent E-mail address%
      \@ifnotempty{##1}{, \ignorespaces##1\unskip}\/:\space
      \ttfamily##2\par}\endgroup}%
  \def\urladdr##1##2{\begingroup
    \def~{\char`\~}%
    \@ifnotempty{##2}{\nobreak\noindent\urladdrname
      \@ifnotempty{##1}{, \ignorespaces##1\unskip}\/:\space
      \ttfamily##2\par}\endgroup}%
  \addresses
  \endgroup
  \global\let\addresses=\@empty
}
\def\@setabstracta{%
    \ifvoid\abstractbox
  \else
    \skip@17pt \advance\skip@-\lastskip
    \advance\skip@-\baselineskip \vskip\skip@
%    \chrule\vskip2pt
    \box\abstractbox
    \prevdepth\z@ % because \abstractbox is a vtop
%    \vskip2pt\hrule
    \vskip-28pt
  \fi
}
\renewenvironment{abstract}{%
  \ifx\maketitle\relax
    \ClassWarning{\@classname}{Abstract should precede
      \protect\maketitle\space in AMS document classes; reported}%
  \fi
  \global\setbox\abstractbox=\vtop \bgroup
    \normalfont\small
    \list{}{\labelwidth\z@
      \leftmargin0pc \rightmargin\leftmargin
      \listparindent\normalparindent \itemindent\z@
      \parsep\z@ \@plus\p@
      
    }%
    \item[\hskip\labelsep\bfseries\abstractname.]%
}{%
  \endlist\egroup
  \ifx\@setabstract\relax \@setabstracta \fi
}
\def\ps@headings{\ps@empty
  \def\@evenhead{%
    \setTrue{runhead}%
    \normalfont\scriptsize
    \rlap{\thepage}\hfill
    \def\thanks{\protect\thanks@warning}%
    \leftmark{}{}}%
  \def\@oddhead{%
    \setTrue{runhead}%
    \normalfont\scriptsize
    \def\thanks{\protect\thanks@warning}%
    \rightmark{}{}\hfill \llap{\thepage}}%
  \let\@mkboth\markboth
}\ps@headings
\def\section{\@startsection{section}{1}%
  \z@{-1.4\linespacing\@plus-.5\linespacing}{.8\linespacing}%
  {\normalfont\bfseries\Large}}
\def\subsection{\@startsection{subsection}{2}%
  \z@{-.8\linespacing\@plus-.3\linespacing}{.5\linespacing\@plus.2\linespacing}%
  {\normalfont\bfseries\large}}
\def\subsubsection{\@startsection{subsubsection}{3}%
  \z@{.7\linespacing\@plus.2\linespacing}{-1.5ex}%
  {\normalfont\bfseries}}
\def\paragraph{\@startsection{paragraph}{4}%
  \z@{.7\linespacing\@plus.2\linespacing}{-1.5ex}%
  {\normalfont\itshape}}
\def\@secnumfont{\bfseries}
\renewcommand\contentsnamefont{\bfseries}
\def\@starttoc#1#2{\begingroup
  \setTrue{#1}%
  \par\removelastskip\vskip\z@skip
  \@startsection{}\@M\z@{\linespacing\@plus\linespacing}%
    {.5\linespacing}{%\centering
      \contentsnamefont}{#2}%
  \ifx\contentsname#2%
  \else \addcontentsline{toc}{section}{#2}\fi
  \makeatletter
  \@input{\jobname.#1}%
  \if@filesw
    \@xp\newwrite\csname tf@#1\endcsname
    \immediate\@xp\openout\csname tf@#1\endcsname \jobname.#1\relax
  \fi
  \global\@nobreakfalse \endgroup
  \addvspace{32\p@\@plus14\p@}%
  \let\tableofcontents\relax
}
\def\contentsname{Contents}
\def\l@section{\@tocline{2}{.5ex}{0mm}{5pc}{}}
\def\l@subsection{\@tocline{2}{0pt}{2em}{5pc}{}}
\def\to{\mathchoice{\longrightarrow}{\rightarrow}{\rightarrow}{\rightarrow}}
\newcommand{\shortxra}[2][]{\ext@arrow 0359\rightarrowfill@{#1}{#2}}
\def\longrightarrowfill@{\arrowfill@\relbar\relbar\longrightarrow}
\newcommand{\longxra}[2][]{\ext@arrow 0359\longrightarrowfill@{#1}{#2}}
\renewcommand{\xrightarrow}[2][]{\mathchoice{\longxra[#1]{#2}}%
  {\shortxra[#1]{#2}}{\shortxra[#1]{#2}}{\shortxra[#1]{#2}}}
\def\addtagsub#1{\let\oldtf=\tagform@\def\tagform@##1{\oldtf{##1}\hbox{$_{#1}$}}}
\def\Nopagebreak{\@nobreaktrue\nopagebreak}
\newtheoremstyle{theorem-giventitle}
        {}{}              %%% space between body and thm
        {\itshape}                      %%% Thm body font
        {}                              %%% Indent amount (empty = no indent)
        {\bfseries}                     %%% Thm head font
        {.}                             %%% Punctuation after thm head
        {\thm@headsep}                             %%% Space after thm head
        {\thmnote{\bfseries#3}}%%% Thm head spec
\newtheoremstyle{theorem-givenlabel}
        {}{}              %%% space between body and thm
        {\itshape}                      %%% Thm body font
        {}                              %%% Indent amount (empty = no indent)
        {\bfseries}                     %%% Thm head font
        {.}                             %%% Punctuation after thm head
        {\thm@headsep}                             %%% Space after thm head
        {\thmname{#1}~\thmnumber{#3}\setcurrentlabel{#3}}%%% Thm head spec
\newtheoremstyle{definition-giventitle}
        {}{}              %%% space between body and thm
        {}                      %%% Thm body font
        {}                              %%% Indent amount (empty = no indent)
        {\bfseries}                     %%% Thm head font
        {.}                             %%% Punctuation after thm head
        {\thm@headsep}                             %%% Space after thm head
        {\thmnote{\bfseries#3}}%%% Thm head spec
\def\setcurrentlabel#1{\gdef\@currentlabel{#1}}
\newtheorem{theorem}{Theorem}[section]
\newtheorem{theoremalpha}{Theorem}
\newtheorem{corollaryalpha}[theoremalpha]{Corollary}
\newtheorem{proposition}[theorem]{Proposition}
\newtheorem{corollary}[theorem]{Corollary}
\newtheorem{lemma}[theorem]{Lemma}
\newtheorem{conjecture}[theorem]{Conjecture}
\theoremstyle{definition}
\newtheorem{definition}[theorem]{Definition}
\newtheorem{question}[theorem]{Question}
\newtheorem{remark}[theorem]{Remark}
\theoremstyle{theorem-giventitle}
\newtheorem{theorem-named}{}
\theoremstyle{theorem-givenlabel}
\newtheorem{theorem-labeled}{Theorem}
\theoremstyle{definition-giventitle}
\newtheorem{definition-named}{}
\newtheorem{step-named}{}
\numberwithin{equation}{section}
\def\Z{\mathbb Z}
\def\R{\mathbb R}
\def\Q{\mathbb Q}
\def\sm{\smallsetminus}
\DeclareMathOperator\Int{Int}
\DeclareMathOperator\Wh{Wh}
\def\la{\langle}
\def\ra{\rangle}
\def\id{\mathrm{Id}}
\def\ed{\text{--}}
\begin{document}

\title{Casson towers and slice links}

\author{Jae Choon Cha}
\address{
  Department of Mathematics\\
  POSTECH\\
  Pohang 790--784\\
  Republic of Korea\quad
  -- and --\linebreak
  School of Mathematics\\
  Korea Institute for Advanced Study \\
  Seoul 130--722\\
  Republic of Korea
}
\email{jccha@postech.ac.kr}

\author{Mark Powell}
\address{
  D\'{e}partment de Math\'{e}matiques\\
  UQAM \\
  Montr\'{e}al, QC\\
  Canada
}
\email{mark@cirget.ca}

%\date{\today}
%\makeatletter
%\@namedef{subjclassname@2010}{\textup{2010} Mathematics Subject Classification}
%\makeatother
%\subjclass[2010]{primary: , secondary: }
%\keywords{embedded Morse theory, manifold with boundary, cobordism, critical points}

\def\subjclassname{\textup{2010} Mathematics Subject Classification}
\expandafter\let\csname subjclassname@1991\endcsname=\subjclassname
\expandafter\let\csname subjclassname@2000\endcsname=\subjclassname
\subjclass{%
57N13, % Topology of 4-manifolds
%  57M27, % Invariants of knots and 3-manifolds
 57N70, % Cobordism and concordance (in low dimension)
 57M25%, % Knots and links in $S^3$
%  57Q60; % Cobordism and concordance (in high dimension)
%  57M07, % Topological methods in group theory
}
\keywords{4-manifold, Casson tower, capped grope, disc embedding theorem}

\begin{abstract}
  We prove that a Casson tower of height 4 contains a flat embedded
  disc bounded by the attaching circle, and we prove disc embedding
  results for height~2 and~3 Casson towers which are embedded into a
  4-manifold, with some additional fundamental group assumptions.  In
  the proofs we create a capped grope from a Casson tower and use a
  refined height raising argument to establish the existence of a
  symmetric grope which has two layers of caps, data which is
  sufficient for a topological disc to exist, with the desired
  boundary.  As applications, we present new slice knots and links by
  giving direct applications of the disc embedding theorem to produce
  slice discs, without first constructing a complementary
  4-manifold.  In particular we construct a family of slice knots
  which are potential counterexamples to the homotopy ribbon slice
  conjecture.
\end{abstract}

\maketitle

\section{Introduction}

This paper presents results on \emph{Casson towers} of height 2, 3 and
4 in dimension four, and applications to the problem of slicing knots
and links.

The disc embedding problem is one of the most important questions in
4-manifold topology.  Roughly speaking, when the disc embedding
problem can be solved, the surgery and $s$-cobordism programme for the
classification of 4-manifolds can be carried out as in the high
dimensional case.  In fact disc embedding in these contexts is
essentially equivalent to the Whitney trick, which is a key ingredient
for geometrically realising the algebraic cancellation of intersection
data.
%  The Whitney trick is a key step in the surgery
% and $s$-cobordism programme for the classification of manifolds in
% dimension at least~$5$ \cite{Wall-Ranicki:1999-1}.  When the disc
% embedding problem can be solved in dimension~$4$, then, at least in
% the presence of suitable dual spheres, the Whitney trick works and
% surgery and $s$-cobordism can also be applied to the classification of
% $4$-manifolds.

M.~Freedman solved the disc embedding problem in simply connected
topological 4-manifolds, and as a consequence he was able to classify
such manifolds~\cite[Chapter~10]{Freedman-Quinn:1990-1} using surgery
theory.  Freedman's solution built upon the work of A.~Casson, who
introduced the influential idea of a Casson tower.  A Casson tower
arises as the trace of repeated attempts to eliminate intersections of
an immersed disc, the goal being to find a flat embedded
disc~\cite{Casson-1986-towers}.  Briefly speaking, the \emph{height}
of a Casson tower is the number of stages of iterated attempts.  A
Casson tower $T$, itself a 4-manifold, is endowed with a framed circle
$C=C(T)$ embedded in its boundary.  We ask whether there exists a flat
embedded disc with framed boundary~$C$.  See
Definition~\ref{definition:casson-tower} for details.

Casson considered a tower of infinite height, which is now called a
\emph{Casson handle}~\cite{Casson-1986-towers}.  He showed that a
Casson handle is \emph{proper homotopy equivalent} to an open
2-handle.  In the original proof of the celebrated disc embedding
theorem in dimension 4~\cite{Freedman:1982-1}, Freedman showed that a
Casson handle is \emph{homeomorphic} to an open 2-handle, and
consequently contains a flat embedded disc with framed
boundary~$C(T)$. A key ingredient of the proof was Freedman's
reimbedding theorem~\cite[Theorem~4.4]{Freedman:1982-1}, which says
that a height 6 Casson tower contains within it a height 7 tower
(see~\cite{Bizaca-casson-handles-algorithm} for a detailed
exposition).  Iterating this, it follows that a given height 6 tower
$T$ contains a Casson handle, and consequently contains a flat
embedded disc with framed boundary~$C(T)$.  Gompf and Singh improved
this disc embedding result by showing that height 5 Casson towers are
sufficient for reimbedding~\cite{gompf-singh-1984}.

From this a natural question arises: what is the minimal height of a
Casson tower required to obtain an embedded disc?

In Theorems~\ref{theorem:main}, \ref{theorem:main-height-3} and
\ref{theorem:main-height-2} below we give disc embedding results for
Casson towers of height 4, 3 and 2 respectively, under increasingly
strong assumptions on fundamental groups.  The height 2 result is
particularly useful for the study of knot and link concordance, since
it is often feasible to construct such a tower in $D^4$ bounded by a
knot or link.  %To compute concordance, we would like to know exactly which knots and links are slice.  It seems likely that the full class of slice links is not yet known.

Work of Freedman in the 1980s and
90s~\cite{Freedman:1982-2,Freedman:1985-1,Freedman:1988-2,Freedman:1993-1},
and also later work such as that of
Freedman-Teichner~\cite{Freedman-Teichner:1995-2},
Friedl-Teichner~\cite{Friedl-Teichner:2005-1} and
Cochran-Friedl-Teichner~\cite{Cochran-Friedl-Teichner:2006-1},
produced slice knots and links of great interest.  Particular focus
was placed on the question of which Whitehead doubles are slice (see
Conjecture~\ref{conjecture:topological-whitehead-double} below), since
topological surgery problems in dimension four can be reduced to
atomic problems~\cite{Casson-Freedman:1984-1} which have solutions
precisely when such links are slice.

Using our height 2 Casson tower embedding theorem
(Theorem~\ref{theorem:main-height-2}), we extend the class of known
slice knots to include the new family of slice knots described in
Theorem~\ref{theorem:main-slice-knots}.  We apply the disc embedding
theorem to construct slice discs directly, rather than using the
topological surgery machine employed by many of the papers mentioned
above.  Our slice knots relate closely to the Topological Whitehead
Double Conjecture~\ref{conjecture:topological-whitehead-double}, give
potential counterexamples to the Homotopy Ribbon Slice
Conjecture~\ref{conjecture:homotopy-ribbon-slice}, and suggest a
possible connection between the Homotopy Ribbon Slice
Conjecture~\ref{conjecture:homotopy-ribbon-slice} and the
4-dimensional surgery conjecture.

\subsection{Casson towers of height four, three, and two}

We proceed to introduce our disc embedding results for Casson towers of
height four, three, and two.  Let $W$ be a 4-manifold with boundary.
A framed link $L \subset \partial W$ is \emph{slice} in $W$ if $L$
bounds a collection of disjointly embedded flat discs in~$W$, as
framed manifolds.

\subsubsection*{Height four}

Our first main result implies that a \emph{height 4} Casson tower is
in fact sufficient to obtain a flat embedded disc.  In fact we give a
stronger result.  Briefly, define a \emph{distorted Casson tower} by
introducing plumbings of the top stage discs into discs of stage two
or higher in a Casson tower (see
Definition~\ref{definition:distorted-tower}).

\begin{theoremalpha}
  \label{theorem:main}
  A distorted Casson tower $T$ of height~4 contains a topologically
  embedded flat disc bounded by $C(T)$ as a framed manifold.
\end{theoremalpha}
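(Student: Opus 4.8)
The plan is to convert the height-4 distorted Casson tower $T$ into a geometric object amenable to Freedman's disc embedding machinery — namely, a symmetric capped grope with two layers of caps — and then invoke the conclusion that such a grope contains a flat embedded disc with the correct framed boundary. The first step is a local conversion: each stage of a Casson tower consists of immersed discs whose self-intersections and mutual intersections are the data passed to the next stage. I would replace each immersed disc (together with the kinky handles / plumbings recording its intersections) by a capped surface stage, using the standard trick that a kinky handle is equivalent to a genus-one (or higher) grope stage with caps coming from the next tower stage. Carrying this out across all four stages of $T$, and handling the extra plumbings allowed by the ``distorted'' hypothesis by absorbing them into higher-genus surface stages or additional body sheets, produces a capped grope $G$ of height roughly $4$ with caps, embedded in $T$, whose attaching circle is still $C(T)$ and whose framing is the given one.

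The second, and I expect hardest, step is a \emph{refined height-raising argument}: a generic capped grope obtained this way will have caps that intersect the grope body and each other, so it is not yet in the ``two clean layers of caps'' form that Freedman's theorem requires. The abstract says the authors use ``a refined height raising argument to establish the existence of a symmetric grope which has two layers of caps.'' Concretely, I would push the cap intersections down into the body using the grope structure (contraction/push-off), trading height for cleanliness: each time one contracts a surface stage against its caps one can re-grow additional grope stages from the leftover intersections, and the point of starting with height $4$ (rather than $5$ or $6$) is that this bookkeeping just barely closes up. The symmetry of the resulting grope — that both the ``positive'' and ``negative'' sides carry enough capped structure — is what one must track carefully; this is where the precise count of $4$ stages, together with the flexibility afforded by the distorted plumbings, gets used. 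I would organise this as a finite induction on a complexity measure (number of bad cap intersections, weighted by stage), showing it strictly decreases while the height/symmetry invariant stays sufficient.

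The final step is to feed the symmetric grope with two layers of caps into the disc embedding theorem (in the form: such a grope, or equivalently a skyscraper built from it, is homeomorphic to a standard $2$-handle), yielding a flat topologically embedded disc $\Delta \subset T$ with $\partial \Delta = C(T)$. It remains to check the framing: since all the conversions (kinky handle to capped grope stage, contractions, height raising) are framed operations and $C(T)$ comes with its given framing as part of the Casson tower data, the disc $\Delta$ inherits that framing, so $C(T)$ is slice in $T$ as a framed manifold. The main obstacle, as noted, is the middle step — making the height-raising refined enough to produce genuinely \emph{two} clean layers of caps starting from only height $4$, rather than the height $5$ or $6$ used in the classical Freedman–Gompf–Singh reimbedding arguments; the distorted-tower hypothesis is presumably what buys the extra room needed to make height $4$ suffice.
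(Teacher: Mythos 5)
There is a genuine gap, and it is exactly the issue the paper flags as the main difficulty: $\pi_1$-nullity. Your middle step asserts that a ``refined height-raising argument'' --- contractions, push-offs, and regrowing stages --- will upgrade a height~4 capped grope to a symmetric grope with \emph{two} clean layers of caps. But height raising and cap-cleaning are purely combinatorial moves that keep you inside the capped-grope world; they can never manufacture the second layer of caps (the tower caps of a one storey capped tower), because those are null-homotopies for the double point loops of the first-layer caps, disjoint from the body and caps. Producing them requires a fundamental group input, namely that the relevant double point loops die in the complement of the bottom stage, which is where the good group (Null Disc Lemma) hypothesis of Theorem~\ref{theorem:grope-to-disc-main} enters. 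The paper's proof uses the four stages asymmetrically for precisely this reason: Lemma~\ref{lemma:capped-grope-in-casson-tower} builds only a height~$2$ capped grope inside $T_{1\ed 3}$ (caps from the stage~3 core discs), and the entire fourth stage is spent providing null-homotopies for the stage~3 double point loops, so that by Lemma~\ref{lemma:pi-1-T_1-1-setminus-G_1-1} and Seifert--Van Kampen the image of $\pi_1(\nu G^c_2\sm G_{1\ed 1})$ in $\pi_1(T\sm G_{1\ed 1})$ is $\Z$, a good group; only then do Lemma~\ref{lemma:capped-grope-to-one-story-tower} and Theorem~\ref{theorem:one-story-tower-four-stages-implies-disc} apply. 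If your scheme worked as stated, a height~3 tower would already suffice, since by Corollary~\ref{cor:grope-filtration} a height~3 tower contains capped gropes of arbitrary height --- yet the paper's height~3 result (Theorem~\ref{theorem:main-height-3}) still needs an external good-group assumption, and removing it would essentially require the surgery conjecture for free groups.

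A secondary but real misreading: the ``distorted'' hypothesis does not ``buy extra room.'' A distorted tower allows \emph{additional} plumbings of top-stage discs into lower stages, so it is a weaker hypothesis (every Casson tower is vacuously distorted) and the theorem is \emph{stronger} for covering it; the distortion is a nuisance the argument must tolerate, not a resource. What makes height~4 work is not bookkeeping that ``just barely closes up,'' but the fact that the top stage kills the double point loops one level down, reducing the ambient fundamental group condition to~$\Z$.
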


In other words, $C(T)$ is slice in $T$.  Since a Casson tower is
vacuously a distorted Casson tower, Theorem~\ref{theorem:main} holds
for an ordinary (non-distorted) Casson tower of height~4.  This
assertion seems to have been expected to be true by the experts, but
to the knowledge of the authors, no proof has appeared in the
literature; compare~\cite[Footnote~1]{Ray-2013-1}.

\subsubsection*{Height three}

It is not known in general whether a height 3 Casson tower $T$
contains an embedded disc with boundary~$C(T)$.  Progress has been
made by looking at special cases, as instigated
in~\cite{Casson-Freedman:1984-1}.  Freedman proved that the simplest
Casson tower of height~3, namely the tower with a single double point
at each stage, contains a disc~\cite{Freedman:1988-2}\footnote{In
  fact, Freedman showed that a two component link called
  ``Whitehead${}_3$'' bounds slicing discs in the 4-ball whose
  complement has free fundamental group.  This link is associated to
  the simplest Casson tower $T$ of height 3, as explained in our
  Section~\ref{subsection:kirby-diagram-for-distorted-casson-tower}.
  It turns out that each of the two slicing discs is ambiently isotopic
  to the standard disc in the 4-ball
  by~\cite[11.7A]{Freedman-Quinn:1990-1}.  It follows that the
  exterior of one slicing disc is $T$ and the other slicing disc is
  bounded by~$C(T)$.}.  We remark that completing the analogous
argument to our proof of Theorem~\ref{theorem:main} for a height~3
Casson towers would seem to require the surgery conjecture for
non-abelian free groups.  The corresponding statement to
Theorem~\ref{theorem:main} for general height~3 Casson towers would
therefore be rather interesting.  The main difficulty, as so often in
this subject, is to achieve $\pi_1$-nullity.

Instead of looking for null homotopies internally in Casson towers, we
can consider embedded Casson towers in a 4-manifold, and then try to
find null homotopies inside the 4-manifold.  For our height~3 result,
we use the notion of a \emph{good group}.  In this paper we use the
definition of~\cite{Freedman-Teichner:1995-1}, which defines a group
to be good if it satisfies the $\pi_1$-null disc lemma.  Note that
this differs from the definition of a good group
in~\cite{Freedman-Quinn:1990-1}.  For a precise description and
related discussion, see Definition~\ref{definition:good-group} and the
paragraph following it.
% introduce the notion of a \emph{$\pi_1$-null disc group},
% abbreviated to \pnd\ group.  We postpone the precise description to
% Definition~\ref{definition:pi-1-nd}.  (Experts will guess correctly
% that a \pnd\ group is a group for which the $\pi_1$-null disc lemma
% holds.)  A~\pnd\ group is good in the sense that topological surgery
% works for it, but there might conceivably be good groups which are
% not \pnd.
A result of Freedman-Teichner and Krushkal-Quinn
\cite{Freedman-Teichner:1995-1, Krushkal-Quinn:2000-1} tells us that a
group of subexponential growth is good, as is any group obtained from
good groups by extensions and direct limits.

A.\ Ray considered a framed grope bounded by $C(T)$ in a Casson
tower~$T$~\cite{Ray-2013-1}. Denote the first stage surface of this
grope by~$\Sigma(T)$.  This is an oriented surface embedded in $T$
with $\partial\Sigma(T)=C(T)$.  See Figure~\ref{figure:ray-surface}
and Proposition~\ref{proposition:grope-in-casson-tower} for more
details.  For a disjoint union of Casson towers $T=\bigsqcup T_i$,
denote $C(T):=\bigsqcup C(T_i)$ and $\Sigma(T) := \bigsqcup
\Sigma(T_i)$.  Denote a tubular neighbourhood of $C(T)$ in $\partial
T$ by~$\partial_-(T)$.

\begin{theoremalpha}
  \label{theorem:main-height-3}
  Let $W$ be a $4$-manifold with boundary and suppose that
  $T=\bigsqcup T_i$ is a collection of disjoint Casson towers $T_i$ of
  height~$3$ in $W$ such that $\partial_-(T)\subset \partial W$ and
  the image of $\pi_1(T_i \sm \Sigma(T_i)) \to \pi_1(W \sm\Sigma(T))$
  is a good group for each~$i$.  Then the framed link
  $C(T) \subset \partial W$ is slice in~$W$.
\end{theoremalpha}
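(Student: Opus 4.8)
The strategy follows the blueprint of Theorem~\ref{theorem:main} (the height~4 case), but where that proof internally produces $\pi_1$-null data, here we must borrow null-homotopies from the ambient manifold~$W$ and feed the problem into the $\pi_1$-null disc lemma, whence the good-group hypothesis. First I would invoke Proposition~\ref{proposition:grope-in-casson-tower} to replace each height~3 Casson tower $T_i$ by a capped grope, or more precisely to extract from $T_i$ a framed grope bounded by $C(T_i)$ with first-stage surface $\Sigma(T_i)$ and with the stages coming from the tower's double-point loci. The height~3 hypothesis should translate, via the usual dictionary between tower stages and grope height together with the extra layers of caps available from the top stage discs, into a \emph{capped grope of height~2 with two layers of caps} (a ``symmetric'' configuration), exactly the data that the disc embedding theorem consumes — except that the caps will not be disjointly embedded; they will have intersections and self-intersections among themselves and with the grope body, and crucially their interiors may meet $\Sigma(T)$.

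Next I would push all the bad intersections of the caps off the grope body by finger moves and the standard capped-grope calculus, at the cost of introducing new double points \emph{among the caps}. At this stage one has a capped grope whose caps form an immersed $2$-complex in $W \sm \Sigma(T)$, and the remaining task is to find Whitney discs (or rather $\pi_1$-null immersed discs, in the Freedman--Teichner sense) pairing up these double points. This is where the hypothesis on the image of $\pi_1(T_i \sm \Sigma(T_i)) \to \pi_1(W \sm \Sigma(T))$ enters: because the caps live in (a neighbourhood coming from) $T_i$, the relevant fundamental group into which we must null-homotope is precisely that image, and since it is good, the $\pi_1$-null disc lemma (the defining property of good groups, Definition~\ref{definition:good-group}) supplies the required $\pi_1$-null discs. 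Applying the disc embedding theorem — or the capped-grope version of Freedman's machine — to this symmetric capped grope then yields a flat topologically embedded disc in $W$ bounded by $C(T_i)$, framed correctly, and disjoint from $\Sigma(T)$ hence from the discs for the other components; running this simultaneously over all $i$ (the towers and their gropes being disjoint) shows $C(T) \subset \partial W$ is slice in $W$.

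The main obstacle is the bookkeeping that guarantees the configuration produced after the height-raising / cap-cleanup step really is \emph{symmetric}, i.e.\ has two genuine layers of caps, rather than one-and-a-half: a single layer of caps is never enough, and the whole point of starting from height~$3$ (as opposed to height~$2$, which needs a stronger hypothesis in Theorem~\ref{theorem:main-height-2}) is that the third stage of the tower is exactly what promotes the second layer from ``sheets that still hit $\Sigma(T)$'' to ``honest caps''. Making this precise requires a careful refined height-raising argument — tracking how plumbings and double points propagate through the stages, and verifying that every surface and cap we name stays in the correct subset of $W$ so that its fundamental-group data lands in the good group $\operatorname{im}(\pi_1(T_i\sm\Sigma(T_i))\to\pi_1(W\sm\Sigma(T)))$ rather than in the a priori larger $\pi_1(W\sm\Sigma(T))$. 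I expect the framing and the disjointness from $\Sigma(T)$ to be comparatively routine once the grope-to-tower correspondence of Proposition~\ref{proposition:grope-in-casson-tower} is set up, but the symmetry/height count is the crux.
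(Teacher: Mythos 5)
Your plan has the same coarse shape as the paper's argument—extract Ray's grope from each tower (Proposition~\ref{proposition:grope-in-casson-tower}), cap it off using the top-stage discs, and use the good-group hypothesis (in its NDL form, so that only the \emph{image} subgroup matters) to feed the Freedman machinery—but there is a genuine gap at exactly the point you call the crux, and your expectation of how it is resolved is off. A height-3 tower does \emph{not} yield a height-2 capped grope with two layers of caps, nor any ``symmetric'' configuration ready for the disc embedding theorem: Lemma~\ref{lemma:capped-grope-in-casson-tower} gives precisely a height-2 grope body (from stages 1--2) with a \emph{single} layer of immersed caps coming from parallels of the stage-3 core discs, and there is no further tower material to ``promote'' into a second layer. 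No refined bookkeeping of plumbings through the stages can produce that second layer from the tower itself; it must be manufactured in the ambient manifold. This is the content of Theorem~\ref{theorem:grope-to-disc-main}, which is where the real work happens and which your proposal leaves unspecified: starting from the once-capped grope one applies the Cap Separation Lemma~\ref{lemma:cap-separation} and the Grope Height Raising Lemma~\ref{lemma:grope-height-raising} to reach height~6, contracts so that all cap intersections are self-intersections, constructs transverse spheres for the stage-2 surfaces, and only then uses Definition~\ref{definition:good-group}—applied to the upper height-2 sub-capped-gropes, whose neighbourhoods have $\pi_1$-image inside the hypothesised good subgroup—to replace them by immersed discs whose double point loops are null-homotopic in $W\sm\Sigma(T)$. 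The result is a $\pi_1$-null height-2 capped grope with transverse spheres, which Lemma~\ref{lemma:capped-grope-to-one-story-tower} extends to a one-storey capped tower, and Theorem~\ref{theorem:one-story-tower-four-stages-implies-disc} then produces the flat disc. Your alternative phrasing—find Whitney discs for the cap double points and ``apply the disc embedding theorem''—would, as stated, require goodness in the Freedman--Quinn disc-embedding sense for a group containing the relevant loops, which is not what is hypothesised; the NDL formulation is consumed at the sub-grope replacement step above, not as a Whitney-disc search.

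Two further points. First, the paper's actual proof of this theorem is a short reduction: build $G_2^c$ exactly as in the height-4 proof, observe that the image of $\pi_1(\nu G_2^c\sm G_{1\ed 1})\to\pi_1(W\sm G_{1\ed 1})$ factors through the hypothesised good image and that goodness passes to subgroups, and quote Theorem~\ref{theorem:grope-to-disc-main}; so the fundamental-group verification you worry about is genuinely routine once that theorem is in place. Second, your disjointness argument for the link case does not work as stated: slicing the components one at a time and noting each disc is disjoint from $\Sigma(T)$ does not make the discs disjoint from \emph{each other}, since each disc is only controlled to lie in a neighbourhood of its tower together with ambient null-homotopies that may wander through the rest of $W$, including near the other towers' discs. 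The paper handles this by treating the disconnected $G_2^c$ as a single union-of-discs-like capped grope, and the conclusion of Theorem~\ref{theorem:grope-to-disc-main} supplies all the flat embedded discs simultaneously and disjointly.
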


\noindent We remark that this result concerns links and not just knots.

\subsubsection*{Height two}

For the height 2 case, we obtain a slicing result under a stronger
simple connectivity hypothesis.  In the statement $T_{p\ed q}$ denotes
the union of the stages $p$ through $q$ inclusive for a Casson
tower~$T$; see Definition~\ref{definition:stages-notation} for a more
precise description.

\begin{theoremalpha}
  \label{theorem:main-height-2}
  Let $W$ be a $4$-manifold with boundary and suppose $T$ is a Casson
  tower of height~$2$ embedded in $W$ such that the second stage
  $T_{2\ed 2}$ of $T$ lies in a codimension zero simply connected
  submanifold $V \subseteq \overline{W \sm T_{1\ed 1}}$.  Then the knot
  $C(T)\subset \partial W$ is slice in~$W$.
\end{theoremalpha}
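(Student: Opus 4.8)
The plan is to realise $C(T)$ as the attaching circle of a symmetric grope with two layers of caps --- the data shown in the proof of Theorem~\ref{theorem:main} to be sufficient for the existence of a flat embedded disc --- with the decisive embedding step carried out inside the simply connected submanifold~$V$.

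First I would feed the height~$2$ tower $T$ into Proposition~\ref{proposition:grope-in-casson-tower} to obtain the framed grope bounded by $C(T)$ with first stage $\Sigma(T)\subset T_{1\ed 1}$.  The stage-two discs of $T$, which lie in $T_{2\ed 2}\subseteq V$, furnish the caps of this grope; together with a dual family of caps coming from the stage-one double-point structure they exhibit $C(T)$ as the boundary of a capped surface whose essential caps are immersed in~$V$, disjoint from $\Sigma(T)$ except along their boundary curves.  Applying the standard capped-grope manipulations --- boundary twisting, pushing in parallel copies, and symmetric contraction --- I would promote this single layer of caps to a configuration with two layers of caps, all of the moves taking place in (a neighbourhood of) $V$; as in the proof of Theorem~\ref{theorem:main}, this step also manufactures algebraically dual $2$-spheres for the top caps out of the lower cap structure.

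The heart of the argument, and the step I expect to be the main obstacle, is then to make the top caps disjointly and flatly embedded.  Here I would invoke the disc embedding theorem for the good group $\pi_1(V)$: since $V$ is simply connected, $\pi_1(V)$ is trivial, hence good (Definition~\ref{definition:good-group}), and every loop in $V$ bounds, so the required $\pi_1$-null hypothesis holds automatically.  The embedding theorem, applied to the top caps together with their dual spheres inside $V$, replaces them by disjointly and flatly embedded caps; these are automatically disjoint from $\Sigma(T)$ because $\Sigma(T)\subset T_{1\ed 1}$ meets $V$ only along part of its boundary.  Surgering $\Sigma(T)$ along the resulting embedded dual caps produces a flat embedded disc with framed boundary $C(T)$, so the knot $C(T)$ is slice in~$W$.

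The reason the hypothesis here is genuine simple connectivity, rather than the weaker good-group hypothesis of Theorem~\ref{theorem:main-height-3} or the purely internal argument of Theorem~\ref{theorem:main}, is that a height~$2$ tower provides too little grope height to iterate height raising: all of the intersection and $\pi_1$-nullity problems among the caps must be dissolved in a single application of the embedding theorem.  This forces us to confine the caps to a region in which $\pi_1$ vanishes, so that the strongest form of the theorem --- with its dual-sphere and $\pi_1$-null hypotheses verifiable directly --- is available.
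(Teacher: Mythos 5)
There is a genuine gap, and it sits exactly at the point you flag as ``the main obstacle''. After capping the first-stage surface you only have a height~$1$ capped grope: a surface bounded by $C(T)$ with twisted caps on half the basis curves and, on the dual curves, caps made from the stage-two core discs, which still carry the stage-two double points. Your claim that boundary twisting, parallel copies and symmetric contraction ``promote this single layer of caps to a configuration with two layers of caps'' is not a valid step: those moves only rearrange existing caps, whereas a second layer of caps consists of null-homotopies for the double point loops of the first-layer caps, and such null-homotopies must be produced from somewhere. This is precisely where the hypothesis on $V$ does its real work in the paper: the stage-two plumbed handles are traded, via Lemma~\ref{lemma:resolving-double-points}, for genus surfaces with $\pm1$-twisted caps, the dual basis curves are parallel to the stage-two double point loops, and those loops bound discs in $V$ \emph{because $V$ is simply connected}; attaching these null-homotopies as the ``big'' caps and cleaning up with Lemma~\ref{lemma:big-small-caps} yields the properly immersed height~$1.5$ capped grope of Proposition~\ref{proposition:capped-grope-from-embedded-casson-tower}. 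Height~$1.5$ is the minimum the machinery can start from (Lemma~\ref{lemma:grope-height-raising} and Theorem~\ref{theorem:grope-to-disc-main} require it; with caps on both sides of a height~$1$ grope, the transverse object built from the dual side is only a sphere of caps, and tubing caps into it leaves them caps, so height raising cannot begin). In your sketch, simple connectivity of $V$ is used only to say ``$\pi_1(V)$ is trivial, hence good'' and ``$\pi_1$-nullity is automatic'', which misses its essential role.

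The closing appeal to ``the disc embedding theorem applied to the top caps together with their dual spheres inside $V$'' also does not stand as written. The caps are not contained in $V$: their boundaries lie on $\Sigma(T)\subset T_{1\ed 1}$ and the connecting annuli from Proposition~\ref{proposition:grope-in-casson-tower} run through $T_{1\ed 1}$, so any embedding argument takes place in (a neighbourhood of) $T\cup V$ minus the first-stage surface, whose fundamental group is $\Z$, not trivial --- this is exactly what Lemma~\ref{lemma:pi-1-T_1-1-setminus-G_1-1} together with Seifert--Van Kampen is used for in the paper's proof. More seriously, the algebraically dual spheres you say are ``manufactured out of the lower cap structure'' are not actually available: the natural candidate, obtained by doubling a cap along its attaching curve, meets the dual cap once near the base surface but also picks up extra intersections, which do not cancel over the group ring, coming from the self-intersections of the stage-two core discs; so algebraic duality fails precisely when the tower has stage-two double points, and eliminating those excess intersections again requires the null-homotopies in $V$ that your argument never invokes. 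The working route is the paper's: build the height~$1.5$ capped grope in $M=T\cup V$ via Proposition~\ref{proposition:capped-grope-from-embedded-casson-tower}, verify the good-group hypothesis by computing $\pi_1\bigl((T\cup V)\sm G_{1\ed 1}\bigr)\cong\Z$, and then apply Theorem~\ref{theorem:grope-to-disc-main}.
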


In Theorems \ref{theorem:main-height-3} and
\ref{theorem:main-height-2}, the slice discs are contained in a
neighbourhood of a union of the tower itself and a collection of null
homotopies for double point loops constructed during the proofs.

\subsubsection*{Our proofs and gropes}

After Freedman's original proof of the disc embedding theorem using
Casson towers, the grope technology (see
Definitions~\ref{definition:grope}--\ref{definition:one-story-capped-tower})
has been developed in subsequent work by
Quinn~\cite{Quinn-1982-ends-of-maps},
Edwards~\cite{Edwards-annulus-conjecture},
Freedman-Quinn~\cite{Freedman-Quinn:1990-1},
Freedman-Teichner~\cite{Freedman-Teichner:1995-1},
Krushkal-Quinn~\cite{Krushkal-Quinn:2000-1} and others.  It turned out
that gropes are effective for proving the disc embedding theorem in
the non-simply connected case.

Gropes are in fact a key ingredient of our proofs.  For
height four and three, our arguments hinge on Ray's construction of a
framed grope inside a Casson tower~\cite{Ray-2013-1}.  It enables us
to connect the grope and Casson tower techniques.

In the grope setting the minimal data required for the existence of a
topological disc has been quite well optimised in the decades since
the original reference~\cite{Freedman-Quinn:1990-1} was written.  (The
optimisation has not been enough for the surgery conjecture to be
known, of course).  Up to date grope combinatorics were partially
written up in~\cite[Chapter~8]{teamfreedman} by the second author and
W.~Politarczyk as part of the lecture notes for Freedman's lectures
for the Max Planck Institute for Mathematics semester on 4-manifolds
in~2013.  In the hope that they represent a useful addition to the
literature, details relevant to the current paper which cannot be
found in the earlier literature (e.g.\ \cite{Freedman-Quinn:1990-1})
are given below: see Grope Height Raising
Lemma~\ref{lemma:grope-height-raising} and Cap Separation
Lemma~\ref{lemma:cap-separation}.  In fact the proof of the latter
lemma has not appeared anywhere before to the best of our knowledge.
The inclusion of these details is further justified by the following
corollary, which is proven by combining Ray's grope construction with
the Grope Height Raising Lemma~\ref{lemma:grope-height-raising}.

\begin{theorem-named}[Corollary~\ref{cor:grope-filtration}]
  A Casson tower $T$ of height~3 contains an embedded grope of
  height~$n$, with the same attaching circle $C(T)$ as the Casson
  tower, for all $n$.
\end{theorem-named}

This improves a result of Ray~\cite[Theorem~A~(i)]{Ray-2013-1}.
Further discussion of this corollary can be found in
Section~\ref{section:casson-tower-3-infinite-grope}.

The proof of the height 2 result, Theorem~\ref{theorem:main-height-2},
requires an entirely new construction, given in
Proposition~\ref{proposition:capped-grope-from-embedded-casson-tower},
of \emph{capped} gropes from Casson towers embedded in a 4-manifold,
under a certain fundamental group condition.  This depends on new
geometric arguments and some quite delicate combinatorics.  The
application to slice knots discussed next utilises this construction.

\subsection{Applications to slicing knots and links}

We apply our results on Casson towers to present new slice knots and
slice links in~$S^3$.  As usual, we say a knot or link in $S^3$ is
\emph{slice} if it is slice in~$D^4$.

\subsubsection*{New slice knots}

To state our results on knots, we recall that Milnor called a link $L$
in $S^3$ \emph{homotopically trivial} if its components admit disjoint
null-homotopies \cite{Milnor:1954-1}. That is, if there are maps
$h_i\colon D^2 \to S^3$ such that $L=\bigsqcup_i h_i(S^1)$ and
$h_i(D^2)\cap h_j(D^2)=\emptyset$ for $i\ne j$.  We also recall that a
band sum operation on a link $L$ is performed along an embedded band
$D^1 \times D^1$ which joins two components of~$L$ and such that
$\Int(D^1) \times D^1$ is disjoint from~$L$.  Denote the untwisted
Whitehead double of a link $L$ by~$\Wh(L)$.  The link $\Wh(L)$ is only
defined up to a sign choice for the clasp when doubling each component
of $L$.  Our theorems hold for any choices of signs.

\begin{theoremalpha}
  \label{theorem:main-slice-knots}
  Suppose that $L$ is an $m$-component homotopically trivial link, and $K$
  is a knot obtained from $\Wh(L)$ by applying $m-1$ band sum
  operations.  Then $K$ is slice.
\end{theoremalpha}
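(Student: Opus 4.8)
The plan is to realise $K$ as the attaching circle $C(T)$ of a height~$2$ Casson tower $T$ embedded in $D^4$, with $C(T)\subset S^3=\partial D^4$, whose second stage lies in a simply connected codimension zero submanifold of $\overline{D^4\sm T_{1\ed 1}}$, and then to apply Theorem~\ref{theorem:main-height-2}. The input from the Whitehead double is the standard immersed disc: writing $\Wh(L)=\bigsqcup_i\Wh(L)_i$ with $\Wh(L)_i$ contained in a tubular neighbourhood $N_i\cong S^1\times D^2$ of $L_i$, compression of the genus one Seifert surface of the untwisted Whitehead double along its meridional curve exhibits $\Wh(L)_i$ as the boundary of an immersed disc $\Delta_i$, which we push into a thin collar $S^3\times[0,\epsilon)$ of $\partial D^4$, with exactly one transverse double point. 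Two standard features matter here: the normal framing of $\Delta_i$ restricts to the $0$-framing on $\Wh(L)_i$, which is precisely the meaning of ``untwisted''; and the double point loop $\gamma_i$ of $\Delta_i$ is isotopic, in the complement of $\Delta_i$, to a $0$-framed parallel copy of the companion $L_i$ --- the component itself, not a meridian. It is the second feature that makes homotopic triviality of $L$ the right hypothesis.

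I would build the first stage of $T$ for $K$ as follows. The knot $K$ is obtained from $\Wh(L)$ by $m-1$ band sum operations; the bands form a spanning tree joining the $m$ components, and their interiors are disjoint from $\Wh(L)$. Pushing the bands slightly into $D^4$ and using them to band the discs $\Delta_i$ together produces a single immersed disc $\Delta$ with $\partial\Delta=K$, with $0$-framed boundary, with exactly $m$ double points --- no new ones are created --- and with the same double point loops $\gamma_1,\dots,\gamma_m$, since each $\gamma_i$ is a local object near the corresponding double point. Set $T_{1\ed 1}=N(\Delta)\subset S^3\times[0,\epsilon)$.

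For the second stage I would use the hypothesis that $L$ is homotopically trivial. Let $h_i\colon D^2\to S^3$ be null-homotopies of the $L_i$ with pairwise disjoint images, and push their interiors into the inner $4$-ball $B_0:=\overline{D^4\sm(S^3\times[0,\epsilon))}$, keeping $h_i(S^1)=L_i\times\{\epsilon\}\subset\partial B_0$. Using the isotopy from the first paragraph, run each double point loop $\gamma_i\subset\partial N(\Delta)$ by a thin annulus $A_i$, disjoint from $\Delta$ and from the $A_j$ for $j\ne i$, out to the $0$-framed copy $L_i\times\{\epsilon\}$, where it is capped by the pushed-in $h_i$. This produces immersed discs $\Delta_i'=A_i\cup h_i$ with $\partial\Delta_i'=\gamma_i$, pairwise disjoint and meeting $\Delta$ only along their attaching regions; correcting framings if necessary by boundary twisting --- which only adds local double points to the $\Delta_i'$ --- we obtain that $T:=T_{1\ed 1}\cup\bigsqcup_i N(\Delta_i')$ is a height~$2$ Casson tower embedded in $D^4$ with $C(T)=K\subset S^3$.

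It remains to verify the hypothesis of Theorem~\ref{theorem:main-height-2}. The $4$-ball $B_0$ is simply connected and disjoint from the interior of $T_{1\ed 1}$; set $V:=B_0\cup\bigcup_i N(A_i)$, a codimension zero submanifold of $\overline{D^4\sm T_{1\ed 1}}$ containing $T_{2\ed 2}=\bigsqcup_i N(\Delta_i')$. Each $N(A_i)\cong S^1\times D^3$ is attached to $B_0$ along a tubular neighbourhood $S^1\times D^2$ of $L_i$ in $\partial B_0$, and since such an $L_i$ is null-homotopic in the $4$-ball $B_0$, van Kampen's theorem gives $\pi_1(V)=1$. Theorem~\ref{theorem:main-height-2} now applies and shows that $K=C(T)$ is slice in $D^4$; that is, $K$ is slice. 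I expect the main obstacle to be the geometry and framing bookkeeping of the second stage: one must pin down that the double point loop of the Whitehead pattern really is a $0$-framed pushoff of the companion along an annulus missing the first stage disc, so that the disjoint null-homotopies can be grafted on without creating intersections with $T_{1\ed 1}$ or with one another and with the correct framings; and then confirm that after the band sums the whole configuration still assembles into an embedded tower with the stated containment. Once $T$ is correctly in place, the conclusion is immediate.
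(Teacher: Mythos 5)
Your proposal is essentially the paper's own argument: you build a height~2 Casson tower in $D^4$ whose attaching circle is $K$, taking the first stage to be the clasp-undoing immersed discs for the components of $\Wh(L)$ banded together along the $m-1$ bands, and the second stage to be the disjoint null-homotopies of $L$ (guaranteed by homotopic triviality) pushed into the inner 4-ball, and then you invoke Theorem~\ref{theorem:main-height-2} with $V$ the simply connected inner region. The two ``standard features'' you quote at the outset are exactly the content of the paper's Lemma~\ref{lemma:Wh-plumbed-handles}, which is proved there by a Kirby-diagram/product-structure argument; your variant where $V$ is the inner ball together with the annulus neighbourhoods, rather than arranging the double point loops to land on the boundary of the inner ball, is an inessential difference.

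The one point to repair is the aside ``correcting framings if necessary by boundary twisting --- which only adds local double points to the $\Delta_i'$.'' That mechanism is not available here: the framing that must be matched when attaching the second stage is the \emph{preferred} framing of Definition~\ref{definition:casson-tower}, which by construction is insensitive to adding local kinks (it is the disc framing twisted by $k_--k_+$), while a genuine boundary twist changes the disc framing only at the cost of an intersection of $\Delta_i'$ with the material along which one twists, which would destroy the Casson tower structure. Fortunately no correction is needed: since the double point loops of the first stage are $0$-framed copies of the $L_i$ (Lemma~\ref{lemma:Wh-plumbed-handles}), and since the preferred framing of the attaching circle of any plumbed handle embedded in the inner 4-ball with attaching region in its boundary 3-sphere is the zero framing --- the general fact invoked in the paper's proof --- the framings agree on the nose, and your tower $T$ is a Casson tower as required.
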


Our proof of Theorem~\ref{theorem:main-slice-knots} uses
Theorem~\ref{theorem:main-height-2} on Casson towers of height 2.
The details are discussed in Section~\ref{section:slice-knots}.

Theorem~\ref{theorem:main-slice-knots} specialises to several
interesting cases.  First, taking $L$ to be a knot we see that
Theorem~\ref{theorem:main-slice-knots} has, as a special case, the
result of Freedman that the Whitehead double of any knot is slice.
When $L$ is a knot there are no band sums.  More generally, when the
bands miss standard genus one Seifert surfaces for the components of
$\Wh(L)$, we obtain a knot of Alexander polynomial one, to which
Freedman's slicing result applies.  The novel aspect of
Theorem~\ref{theorem:main-slice-knots} is that arbitrary bands are
allowed.  By applying Theorem~\ref{theorem:main-slice-knots} for $L$ a
link and suitably complicated bands, we obtain a large family of new
slice knots.  For instance, the following corollary gives a way to
construct intriguing examples.

\begin{corollaryalpha}
  \label{corollary:main-whitehead-ribbon-slice}
  Suppose $L$ is an $m$-component homotopically trivial link, and $R$
  is a ribbon knot.  Consider a split union $\Wh(L)\sqcup R$ in $S^3$,
  and choose $m$ disjoint bands which join each component of $\Wh(L)$
  to~$R$, such that in addition the bands are disjoint from an
  immersed ribbon disc for $R$ in $S^3$ and are disjoint from Seifert
  surfaces for $\Wh(L)$.  Then the knot $K$ obtained from
  $\Wh(L)\sqcup R$ by these band sum operations along the arcs is
  slice.
\end{corollaryalpha}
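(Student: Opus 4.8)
The plan is to reduce Corollary~\ref{corollary:main-whitehead-ribbon-slice} to Theorem~\ref{theorem:main-slice-knots} by absorbing the ribbon knot $R$ into a larger homotopically trivial link. First I would use the immersed ribbon disc $\Delta$ for $R$ in $S^3$ to replace $R$ by an unknot. Precisely, if $R$ bounds an immersed ribbon disc with $k$ ribbon singularities, then a neighbourhood of $\Delta$ in $S^3$ exhibits $R$ as obtained from a $(k+1)$-component unlink $U=U_0\sqcup U_1\sqcup\cdots\sqcup U_k$ by $k$ band sums, where $U_0$ is the component that will be joined to $\Wh(L)$ and $U_1,\dots,U_k$ are the ``extra'' unknotted components created by pushing off along the ribbon moves. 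Since the bands realising $R$ from $U$ lie in a small neighbourhood of $\Delta$, and the bands joining $\Wh(L)$ to $R$ were chosen disjoint from $\Delta$, all these bands can be taken mutually disjoint and disjoint from the given Seifert surfaces for $\Wh(L)$.

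Next I would reinterpret the whole picture. Consider the link $L' := L \sqcup O$, where $O=O_1\sqcup\cdots\sqcup O_k$ is a $k$-component unlink split from everything (these $O_j$ are the cores of the extra unknots $U_j$, which being unknots are double slice, hence we can realise $U_j$ as $\Wh(O_j)$ up to isotopy since the untwisted Whitehead double of the unknot is the unknot). Then $\Wh(L') = \Wh(L) \sqcup \Wh(O) = \Wh(L)\sqcup U_1\sqcup\cdots\sqcup U_k$, and the link $L'$ is homotopically trivial: $L$ is homotopically trivial by hypothesis, the $O_j$ bound disjoint embedded discs, and these can be made disjoint from the null-homotopies for the components of $L$ since $O$ is split. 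Now the knot $K$ of the corollary is obtained from $\Wh(L')$ by exactly $(m-1) + 1 + k = (m+k) - 1$ band sums: the original $m-1$ bands turning $\Wh(L)$ into a knot, the $m$-th band joining $\Wh(L)$ to $U_0$, wait --- I should recount --- the $m$ bands joining the $m$ components of $\Wh(L)$ to $R$ become, after the substitution, $m$ bands joining $\Wh(L)$ to the $U_0$-part, plus the $k$ bands assembling $U_0\sqcup\cdots\sqcup U_k$ into $R$; but $U_0$ must be reintroduced as a Whitehead double too. The cleanest bookkeeping is: let $L'' = L\sqcup O_0\sqcup O_1\sqcup\cdots\sqcup O_k$ be an $(m+k+1)$-component split-augmented homotopically trivial link, so $\Wh(L'') = \Wh(L)\sqcup U_0\sqcup\cdots\sqcup U_k$, and $K$ arises from $\Wh(L'')$ by $k$ bands (building $R$) plus $m$ bands (joining $\Wh(L)$ to $R$), for a total of $m+k = (m+k+1)-1$ band sums, exactly the number allowed by Theorem~\ref{theorem:main-slice-knots}. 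Applying that theorem gives sliceness of $K$.

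The main obstacle I anticipate is the verification that $\Wh(L'')$ is literally realised by the construction --- that is, that the extra unknotted components $U_0,\dots,U_k$ appearing when one thickens the immersed ribbon disc really are untwisted Whitehead doubles of an unlink (one must check the framing/clasp is the untwisted doubling, using that an unknot is isotopic to its own untwisted Whitehead double, and that the clasp sign is immaterial by the remark preceding Theorem~\ref{theorem:main-slice-knots}), together with the bookkeeping that all bands remain simultaneously disjointly embedded with interiors off the link. The disjointness of the ribbon-assembly bands from the $\Wh(L)$-to-$R$ bands follows because the former lie in a neighbourhood of $\Delta$ and the latter were chosen to avoid $\Delta$; disjointness of everything from the Seifert surfaces of $\Wh(L)$ is inherited from the hypotheses plus the fact that the new components and their bands live near $\Delta$, which is split from $\Wh(L)$. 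Once these elementary but fiddly isotopy and general-position checks are in place, the corollary is immediate from Theorem~\ref{theorem:main-slice-knots}.
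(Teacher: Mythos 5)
Your proposal is correct and follows essentially the same route as the paper: decompose the ribbon knot $R$, via its ribbon disc, as the result of band sums on a trivial link, observe that a trivial link is the untwisted Whitehead double of a trivial link $L_0$ split from $L$, so that $K$ is obtained by the permitted number of band sums from $\Wh(L\sqcup L_0)$ with $L\sqcup L_0$ homotopically trivial, and apply Theorem~\ref{theorem:main-slice-knots}. The band counting and disjointness bookkeeping you flag are exactly the (minor) points the paper also handles, by arranging the band feet to miss the ribbon singularities.
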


The additional assumptions on the bands is not necessary to conclude
that $K$ is slice, but we include it so that we can discuss the ribbon
knot $R$ meaningfully.  For example, a slice knot $K$ from
Corollary~\ref{corollary:main-whitehead-ribbon-slice} has the same
Alexander polynomial as $R$; see
Proposition~\ref{proposition:properties-of-slice-knots}.  An explicit
example is given in Figure~\ref{figure:whitehead-ribbon-construction}.
To construct this knot apply band sum operations to $\Wh(L)\sqcup R$,
where $L$ is the 3-component link obtained from the Whitehead link by
adjoining an untwisted parallel of one of the components, and $R$ is
the ribbon knot~$8_{8}$.  By
Corollary~\ref{corollary:main-whitehead-ribbon-slice}, the knot $K$ in
Figure~\ref{figure:whitehead-ribbon-construction} is slice.  This $K$
is a hyperbolic knot (verified by SnapPea), and consequently is prime
and non-satellite.

\begin{figure}[t]
\includegraphics[scale=.8]{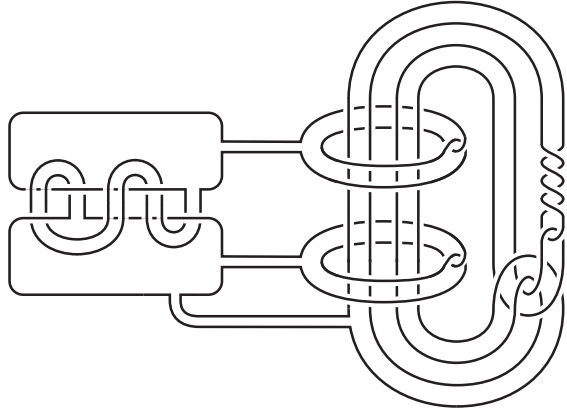}
\caption{An example of a new slice knot}
\label{figure:whitehead-ribbon-construction}
\end{figure}

To the knowledge of the authors, previously known methods and results
are not able to show that all of our knots are slice, except for in
some special cases.
Section~\ref{subsection:properties-of-slice-knots} contains more
details on the failure of the topological surgery method to slice
these knots.  Another possible approach to slice a knot $K$ given by
Corollary~\ref{corollary:main-whitehead-ribbon-slice} would be to show
that the \emph{link} $\Wh(L)\sqcup R$ is slice.  This is an important
conjecture in the theory of topological 4-manifolds.

\begin{conjecture}[Topological Whitehead Double Conjecture]
  \label{conjecture:topological-whitehead-double}
  The Whitehead double $\Wh(L)$ of a link $L$ is freely slice if and
  only if $L$ is homotopically trivial.
\end{conjecture}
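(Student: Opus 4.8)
Since Conjecture~\ref{conjecture:topological-whitehead-double} is open, the plan is to split it into its two implications: the reverse implication I would establish unconditionally, and the forward implication I would reduce to the four-dimensional topological surgery conjecture, this reduction being the main obstacle. As a preliminary I would record the elementary fact that $\Wh(L)$ is \emph{always} homotopically trivial: the Whitehead pattern in a solid torus has winding number zero, hence is null-homotopic inside that solid torus, and applying this inside disjoint tubular neighbourhoods of the components of $L$ produces disjoint null-homotopies for the components of $\Wh(L)$. In particular every distinct-index Milnor invariant of $\Wh(L)$ vanishes automatically, so the reverse implication cannot be proved using only that $\Wh(L)$ is slice --- the hypothesis ``freely'' must enter in an essential way.

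For the reverse implication, write $\Wh(L) = D_1 \cup \cdots \cup D_m$ and let $X$ be the exterior of a system of disjoint flat slice discs $\Delta_i \subset D^4$ with $\pi_1(X) \cong F_m$ free on the meridians; as for any freely slice link, $X$ has the integral homology of a wedge of $m$ circles, so $H_2(X;\Z) = 0$. Put $G = \pi_1(S^3 \sm \nu(\Wh(L)))$. The inclusion $S^3 \sm \nu(\Wh(L)) \hookrightarrow X$ is an isomorphism on $H_1(-;\Z)$ (both sides $\Z^m$, matching up meridians) and surjective on $H_2(-;\Z)$ (the target is $0$), so Stallings' theorem gives $G/G_n \cong F_m/(F_m)_n$ for all $n$. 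Equivalently, every Milnor invariant $\bar\mu_{\Wh(L)}(I)$ vanishes, for all multi-indices $I$, with full indeterminacy. I would then invoke a doubling formula for Milnor invariants: if $\bar\mu_L(J) = 0$ for all distinct-index sequences $J$ with $|J| < k$, then for $|J| = k$ the integer $\bar\mu_L(J)$ is recovered, up to sign and with vanishing indeterminacy at this length, from a Milnor invariant of $\Wh(L)$ associated to a suitable doubling of $J$. Inducting on $k$ forces $\bar\mu_L(J) = 0$ for all distinct-index $J$, and Milnor's characterisation of homotopically trivial links finishes this direction.

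For the forward implication, I would use the machinery of this paper. Starting from disjoint null-homotopies for the components of $L$ in $D^4$ together with the clasps of the Whitehead pattern, I would build, for each component $D_i$ of $\Wh(L)$, an embedded geometric object consumed by our main theorems --- a distorted Casson tower of height~$4$ (Theorem~\ref{theorem:main}), or an embedded height-$2$ Casson tower whose second stage lies in a simply connected submanifold (Theorem~\ref{theorem:main-height-2}), or the arbitrarily tall gropes of Corollary~\ref{cor:grope-filtration} upgraded with two layers of caps via Lemmas~\ref{lemma:grope-height-raising} and~\ref{lemma:cap-separation} --- arranged disjointly and with the joint exterior having free fundamental group, and then apply the corresponding disc embedding result. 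Ray's construction together with Corollary~\ref{cor:grope-filtration} already supplies the disjoint gropes of arbitrary height bounded by $\Wh(L)$, so the content lies entirely in the last requirement.

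That last requirement is the main obstacle: controlling the fundamental group of the \emph{joint} exterior of the $m$ discs. In Theorem~\ref{theorem:main-slice-knots} this is circumvented by performing $m-1$ band sums, which merge the components into a single knot and sweep the problematic fundamental group into a simply connected region so that Theorem~\ref{theorem:main-height-2} applies; but this destroys the link, yielding only that certain knots built from $\Wh(L)$ are slice, not that $\Wh(L)$ itself is freely slice. Removing the band sums requires finding null-homotopies, inside a region with free fundamental group, for the double-point loops of immersed discs bounded by the components of $\Wh(L)$ --- i.e.\ the $\pi_1$-null disc lemma for non-abelian free groups. Since this is the content of the four-dimensional topological surgery conjecture --- the atomic surgery problems of Casson--Freedman being homotopically trivial links --- the realistic outcome is a conditional statement: the forward implication holds modulo topological surgery for free groups, and an unconditional proof would resolve that conjecture. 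I would therefore not expect to remove the surgery hypothesis from the forward direction.
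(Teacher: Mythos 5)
First, a point of status: this statement is an open conjecture, not a theorem of the paper. The paper offers no proof --- it explicitly records that the conjecture remains open, that Freedman settled it for knots and 2\ed component links, that Freedman--Teichner proved the ``if'' direction only under the stronger hypothesis that $L$ is homotopically trivial$^+$, and that the ``only if'' direction would imply the failure of topological surgery for the rank two free group. Your proposal, by its own admission, also does not prove the conjecture: the ``if'' direction is only reduced to the four-dimensional surgery conjecture (which is consistent with the known picture, since the atomic surgery problems are governed by exactly these links), so at best you end with a conditional statement, not a proof.

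The more serious problem is your claimed \emph{unconditional} proof of the ``only if'' direction. The untwisted Whitehead double $\Wh(L)$ is a \emph{boundary link}: each component bounds the standard genus one Seifert surface inside the tubular neighbourhood of the corresponding component of $L$ (the paper uses exactly these surfaces in the proof of Proposition~\ref{proposition:properties-of-slice-knots}), and these surfaces are disjoint because the tubular neighbourhoods are. Consequently \emph{all} Milnor invariants $\bar\mu_{\Wh(L)}(I)$ vanish, for every multi-index $I$, for every link $L$ whatsoever. Your Stallings argument is correct but delivers only this automatic conclusion, so the hypothesis of free sliceness has contributed nothing at the level of nilpotent quotients; and the ``doubling formula'' you invoke --- recovering $\bar\mu_L(J)$ from some Milnor invariant of $\Wh(L)$ --- cannot exist, precisely because the right-hand side is identically zero while the left-hand side need not be. (Such formulas exist for Bing doubling, not for Whitehead doubling.) This is why the ``only if'' direction is genuinely hard: the obvious invariants of $\Wh(L)$ all die, and the known partial results (Freedman, Freedman--Lin, Krushkal) require much more delicate use of the slice disc complement than Stallings' theorem. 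Note also that an elementary unconditional proof of this direction would, by the paper's remarks, disprove the surgery conjecture for free groups --- a strong sign that the inductive step you propose had to fail somewhere.
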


Here a link is called \emph{freely slice} if there are slice discs
whose complement has free fundamental group.
Conjecture~\ref{conjecture:topological-whitehead-double} was stated
explicitly in~\cite[Conjecture~1.1]{Cochran-Friedl-Teichner:2006-1},
but was implicit in several earlier works such
as~\cite{Freedman:1988-2, Freedman-Lin:1989-1,
  Freedman-Teichner:1995-2}.

The only if direction of the conjecture implies that topological
surgery does not work for the rank two free
group~\cite{Casson-Freedman:1984-1, Freedman-Lin:1989-1}.  Freedman
confirmed the conjecture for knots and 2-component
links~\cite{Freedman:1988-2}.  See \cite{Krushkal:1999-1,
  Krushkal:2008-1, Krushkal:2014-1} for recent progress towards the
only if direction in the 3-component case.

The best known result toward the if direction of
Conjecture~\ref{conjecture:topological-whitehead-double} is a theorem
of Freedman and Teichner that if a link $L$ is homotopically
trivial$^+$, then $\Wh(L)$ is (freely)
slice~\cite{Freedman-Teichner:1995-2}, where $L$ is said to be
\emph{homotopically trivial\hspace{1pt}$^+$} if any link obtained from
$L$ by adjoining a zero-linking parallel copy of one of the components
is homotopically trivial.

Since Conjecture~\ref{conjecture:topological-whitehead-double} remains
open, this approach is not sufficient to slice the knots of
Corollary~\ref{corollary:main-whitehead-ribbon-slice} when one uses a
link $L$ which is homotopically trivial but not homotopically
trivial$^+$.  For instance, this is the case for the knot of
Figure~\ref{figure:whitehead-ribbon-construction}.

\subsubsection*{Homotopy ribbon-slice conjecture}

Recall that the ribbon slice conjecture claims that every slice knot
is a ribbon knot.  More precisely, the statement depends on the
category: in the smooth case, one asks whether a knot bounds a smooth
slicing disc if and only if it is a ribbon knot.  In the topological
case, first note that not all slice knots are ribbon, since there are
slice knots which are not smoothly slice.
Following~\cite{Casson-Gordon:1983-1}, we say that a knot $K$ in $S^3$
is \emph{homotopy ribbon} if there is a slicing disc $\Delta$ in $D^4$
for which the inclusion induces an epimorphism $\pi_1(S^3\sm K)
\twoheadrightarrow \pi_1(D^4\sm \Delta)$.  As stated in, for
instance,~\cite[Section~3.5]{Levine-Orr:1997-1} (also cf.\
\cite[Question~6.2]{Casson-Gordon:1983-1},
\cite[Problem~4.22]{Kirby:problem-list-1995-edition}), one asks the
following in the topological category.

\begin{conjecture}[Homotopy Ribbon Slice Conjecture]
  \label{conjecture:homotopy-ribbon-slice}
  A knot is slice if and only if it is homotopy ribbon.
\end{conjecture}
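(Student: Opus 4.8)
The reverse implication is immediate from the definitions: a homotopy ribbon knot bounds a slicing disc $\Delta$ in $D^4$, hence is slice. The substance of the conjecture is the forward implication, that every slice knot is homotopy ribbon, and this is genuinely open --- indeed the point of the present paper is not to establish it but to produce candidate counterexamples. What follows is therefore not a proof but a sketch of the only plausible line of attack, together with the reason I expect it to be the main obstacle.

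Suppose $K$ is slice, with slicing disc $\Delta\subset D^4$ and exterior $X=D^4\sm\nu\Delta$. It is classical that $H_1(X)\cong\Z$, generated by a meridian $\mu$ of $\Delta$, and that the inclusion $S^3\sm K\hookrightarrow X$ induces an isomorphism on $H_1$; in particular any failure of homotopy ribbonness is non-abelian in nature. The induced map $\pi_1(S^3\sm K)\to\pi_1(X)$ carries a meridian of $K$ to $\mu$, and since $\mu$ normally generates $\pi_1(X)$ the map becomes surjective after passing to normal closures, but it need not be surjective on the nose --- and homotopy ribbonness asks exactly for surjectivity. The plan would be to replace $\Delta$ by another slicing disc $\Delta'$, with the same boundary $K$, for which the induced map \emph{is} surjective: concretely, to perform ambient surgery on $\Delta$, or a Casson-tower style reimbedding inside $X$, so as to kill the part of $\pi_1(X)$ not carried by the meridian. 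Realising such a modification --- cancelling the relevant intersection data by embedded Whitney discs, or surgering embedded $2$-spheres representing the offending $\pi_1$-classes --- is precisely a disc embedding problem in the $4$-manifold $X$. Since $\pi_1(X)$ is a ``ribbon-type'' group which typically contains nonabelian free subgroups, this embedding problem is not known to be solvable; it would follow from the $4$-dimensional surgery conjecture for the groups involved, and appears to be no easier.

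The main obstacle is therefore the obstacle that blocks $4$-dimensional surgery itself: there is no general mechanism for upgrading an arbitrary slicing disc to one for which the knot group already surjects onto the fundamental group of the exterior, which is exactly the kind of $\pi_1$-controlled modification the surgery conjecture would supply. This is sharply illustrated by the slice knots produced here via Theorem~\ref{theorem:main-slice-knots} and Corollary~\ref{corollary:main-whitehead-ribbon-slice}, obtained from $\Wh(L)$ with $L$ homotopically trivial but not homotopically trivial$^+$: for such a $K$ a slicing disc is known to exist by Theorem~\ref{theorem:main-height-2}, but there is no reason to expect a homotopy ribbon one, and the constraints a homotopy ribbon disc would impose (see Proposition~\ref{proposition:properties-of-slice-knots}) plausibly amount to cases of the surgery conjecture for free groups. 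In short, proving the forward implication in full generality appears to be at least as hard as the $4$-dimensional surgery conjecture for nonabelian free groups; absent a breakthrough there, the realistic expectation is that some $K$ of the above type is in fact a counterexample, so the honest ``proof proposal'' is to search for such a counterexample rather than for a proof.
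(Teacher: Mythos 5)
This statement is a conjecture, not a theorem: the paper offers no proof of it, and in fact its main purpose here is to exhibit the slice knots of Corollary~\ref{corollary:main-whitehead-ribbon-slice} as potential counterexamples (see Question~\ref{question:homotopy-ribbon}). Your assessment is exactly right and matches the paper's stance --- the ``only if'' direction is immediate from the definitions, the ``if'' direction is genuinely open and tied to the $4$-dimensional surgery conjecture, and no proof should be attempted --- so there is nothing to correct.
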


We remark that the homotopy ribbon property is essential in the study
of ribbon obstructions in the literature.  See for
example~\cite{Casson-Gordon:1983-1, Bonahon:1983-1,
  Casson-Gordon:1986-1, Miyazaki:1994-1, Friedl:2003-4,
  Davis-Naik:2002-1}.

Our geometric method, which applies the disc embedding theorem
directly to construct a slicing disc, gives potential counterexamples
to the homotopy ribbon slice conjecture.  In particular, we ask an
explicit question:

\begin{question}
  \label{question:homotopy-ribbon}
  Is the slice knot in
  Figure~\ref{figure:whitehead-ribbon-construction} homotopy ribbon?
\end{question}

We remark that no potential counterexample to the homotopy ribbon
slice conjecture was known---more precisely, every previously known
slice knot is either smoothly slice or homotopy ribbon, or both.  For
instance, many slice knots in the literature which are not smoothly
slice are knots with Alexander polynomial one. These are homotopy
ribbon by Freedman's result.  Slice knots obtained by the results in
\cite{Friedl-Teichner:2005-1} are homotopy ribbon, too.  Several
papers in the literature (e.g.\
\cite{Hedden-Livingston-Ruberman:2010-01, Cochran-Harvey-Horn:2012-1,
  Cochran-Horn:2012-1}) also consider slice knots produced by
satellite constructions using companion knots of Alexander polynomial
one.  They are all homotopy ribbon.  The essential reason for this is
that all use (building blocks obtained by) a topological surgery
construction of a slice disc exterior, with the fundamental group of a
ribbon disc exterior, as in \cite{Freedman-Quinn:1990-1}
and~\cite{Friedl-Teichner:2005-1}.  The smoothly slice knots presented
in \cite{Gompf-Scharlemann-Thompson:2010-1,Abe-Tange:2013-1}, which
are not known to be ribbon, can be seen to be homotopy ribbon by
inspecting their constructions.

We remark that the if direction of
Conjecture~\ref{conjecture:topological-whitehead-double} would imply
that the slice knots given by
Corollary~\ref{corollary:main-whitehead-ribbon-slice} are homotopy
ribbon, and in particular that the answer to
Question~\ref{question:homotopy-ribbon} is yes.  It is also
interesting to note that according
to~\cite[Proposition~2]{Casson-Freedman:1984-1}, in order to solve
4-dimensional surgery problems, one needs the Whitehead doubled links
in Conjecture~\ref{conjecture:topological-whitehead-double} to be
homotopy ribbon.  We discuss more related questions at the end of
Section~\ref{section:slice-knots}.

\subsubsection*{Smooth status}
The smooth status of our knots is also interesting.  We think our
knots are unlikely to be smoothly slice (particularly when all the
clasps have the same sign); compare~\cite{LevineA:2009-01}.  For some
special cases of
Corollary~\ref{corollary:main-whitehead-ribbon-slice}, we computed the
Rasmussen $s$-invariant to be nonzero, aided by a computer.  Thus at
least some of our examples are not smoothly slice.  Recall that the
Alexander polynomial satisfies $\Delta_K(t) = \Delta_R(t)$, in the
notation of Corollary~\ref{corollary:main-whitehead-ribbon-slice}\@.
The following natural question arises.

\begin{question}
  \label{question:smooth-conc-alex-poly-one}
  Is the slice knot in
  Figure~\ref{figure:whitehead-ribbon-construction} smoothly
  concordant to an Alexander polynomial one knot?
\end{question}

In particular for the knot $K$ of
Figure~\ref{figure:whitehead-ribbon-construction} we have
\[
\Delta_K(t) = \Delta_{8_8}(t) = 2t^{-2}-6t^{-1}+ 9-6t+ 2t^2 = (2t^2 -
2t+1)(2t^{-2} -2t^{-1} +1).
\]
We think the answer to
Question~\ref{question:smooth-conc-alex-poly-one} is likely to be no,
but we do not know at present how to perform the computation of
$d$-invariants which we think will be necessary to prove this.  The
existence of topologically but not smoothly slice knots with this
property was shown in~\cite{Hedden-Livingston-Ruberman:2010-01}.  We
remark that their examples were constructed using satellite operations
which tied in Alexander polynomial one knots.  We conjecture that
there are slice knots produced by
Corollary~\ref{corollary:main-whitehead-ribbon-slice}, which are
linearly independent from the examples
in~\cite{Hedden-Livingston-Ruberman:2010-01} in the smooth knot
concordance group modulo Alexander polynomial one knots.

\subsubsection*{New slice links}

Using (distorted) Casson towers of height 4 and
Theorem~\ref{theorem:main}, we prove the following two results on
links.  To state the first, we consider the following operation, which
is called \emph{ramified Whitehead doubling}: for a given knot, take
some number of untwisted parallel copies, and then replace each
parallel copy by its untwisted Whitehead double.  Either sign may be
used for the clasp.  We may iterate, by applying this operation again
to each component produced in the previous step.  If we repeat this
$n$ times, where the number of parallel copies used in each iteration
may change, then we say that the result is obtained by an
\emph{$n$-fold ramified Whitehead doubling}.  Define a \emph{ramified
  $\Wh_n$ link} to be a link obtained from the Hopf link by replacing
one component with its $n$-fold ramified Whitehead double.

\begin{theoremalpha}
  \label{theorem:main-ramified-wh4}
  Any ramified $\Wh_n$ link is slice for $n\ge 4$.
\end{theoremalpha}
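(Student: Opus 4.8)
The plan is to realise a ramified $\Wh_n$ link as the attaching circles of a height~4 distorted Casson tower, and then invoke Theorem~\ref{theorem:main}. First I would recall the standard dictionary (compare the footnote on ``Whitehead$_3$'' and Section~\ref{subsection:kirby-diagram-for-distorted-casson-tower}) between towers and Kirby diagrams: an iterated untwisted Whitehead double of a knot exactly encodes the recursive clasp-and-push-off structure that appears when one attempts to cap off a disc and is forced into the next stage of a Casson tower. Concretely, I would start from the Hopf link: one component $C_0$ bounds an obvious flat disc in $D^4$ (it is an unknot), so we only need to slice the second component. The second component is the $n$-fold ramified Whitehead double of a knot $J$ which, by construction, is \emph{nullhomotopic in the complement of the disc bounded by $C_0$}; in fact one can arrange $J$ to be meridian of the pushed-in disc, so in the exterior it is isotopic to a small unknot. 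From that innermost unknot one builds outward: each ramification (taking several parallel copies) corresponds to taking several parallel copies of a Casson tower stage (this is permissible since stages consist of standard immersed handles), and each Whitehead doubling step replaces a parallel copy by a stage with a single clasp-type double point pushed off into the next stage. Doing this $n$ times produces a height-$n$ tower; the ``ramified'' bookkeeping only multiplies the number of parallel surfaces at each level and does not change the height.

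Next I would handle the case $n\ge 4$ by \emph{truncating}: if $n>4$ we do not need the full height-$n$ tower, since a height~4 Casson tower suffices by Theorem~\ref{theorem:main}. More precisely, after building the height-$n$ tower one keeps only the bottom four stages and observes that the top stage discs of this truncated object are immersed discs (not embedded), with their double points controlled. This is where the notion of a \emph{distorted} Casson tower enters: the deeper ramification data, when collapsed, manifests as plumbings of the fourth-stage discs into earlier stages, which is precisely what Definition~\ref{definition:distorted-tower} permits. So the truncated object is a distorted Casson tower of height~4 with attaching circle the given link component, embedded in $D^4$ and disjoint from the disc bounded by $C_0$. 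Then Theorem~\ref{theorem:main} produces a flat embedded disc bounded by this component, disjoint from $C_0$'s disc by construction, so the whole ramified $\Wh_n$ link is slice.

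The main obstacle I expect is the careful verification that the ramified-Whitehead-doubling recursion genuinely produces a \emph{distorted} Casson tower of height~4 in the precise sense of Definition~\ref{definition:distorted-tower}, rather than some more degenerate object — i.e.\ controlling exactly how the collapsed deep stages reappear as plumbings into stages $2$ through $4$, and checking all framings are the zero (untwisted) framing, matching the ``as framed manifolds'' clause. There is also a bookkeeping subtlety in making the parallel copies genuinely disjoint and in confirming that the base case (the unknotted core around the Hopf companion) together with $n-4$ collapsed stages leaves \emph{exactly} four honest stages. None of this is deep, but it requires a clean inductive setup; once it is in place, the result is immediate from Theorem~\ref{theorem:main}. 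It is worth noting that the necessity of $n\ge 4$ is exactly the height-4 hypothesis of Theorem~\ref{theorem:main}, and that for $n<4$ one would need the stronger (conjectural, for general towers) height~2 or~3 statements, which do not apply here without extra fundamental-group hypotheses on the ambient manifold.
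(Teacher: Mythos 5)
Your overall strategy works, and interestingly it is the mirror image of the paper's. You give the \emph{undoubled} Hopf component $C_0$ the trivial pushed-in disc and build height-$4$ Casson towers bounded by the components of the ramified double, by stacking the standard plumbed handles of Lemma~\ref{lemma:Wh-plumbed-handles} (one layer per doubling, parallel copies accounting for the ramification) in collar levels inside a solid torus around the meridian, capping $C_0$ off deeper in $D^4$; Theorem~\ref{theorem:main} then slices each doubled component inside its own tower. This stacking is essentially the construction from the paper's proof of Theorem~\ref{theorem:main-slice-knots}, iterated. The paper's proof runs the other way around: it reduces to $n=4$ and observes, via the handle eliminations of Section~\ref{subsection:kirby-diagram-for-distorted-casson-tower}, that the ramified $\Wh_4$ link is precisely $C(T)$ together with the dotted circles of a $2$-handle-free Kirby diagram of a (non-distorted) height-$4$ tower $T$, so that $T$ embeds in $D^4$ as the exterior of the \emph{standard} slice discs of the dotted circles (Lemma~\ref{lemma:distorted-casson-tower-slicing}); the doubled components then get the trivial discs and the undoubled circle $C(T)$ gets the single Freedman disc from Theorem~\ref{theorem:main}. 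Your version trades that Kirby-calculus identification for an explicit geometric construction and one application of Theorem~\ref{theorem:main} per doubled component rather than one in total.

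Two assertions in your write-up are false, though neither turns out to be load-bearing once the construction is set up as above. First, the meridian $J$ of $C_0$ is \emph{not} null-homotopic in the complement of the pushed-in disc for $C_0$: it generates the infinite cyclic fundamental group of that complement, and its parallels do not even bound null-homologous surfaces there. Fortunately nothing requires this: the top-stage double point loops of a Casson tower need not bound anything, which is exactly why height $4$ and Theorem~\ref{theorem:main} come with no hypotheses on an ambient manifold; but if your ``build outward from the innermost unknot'' step secretly relied on capping those loops, it would fail. Second, no distorted towers arise: keeping the bottom four stages of the height-$n$ tower means simply discarding stages $5$ through $n$, leaving the honest (non-distorted) height-$4$ tower $T_{1\ed 4}$; there is no ``collapsing'' of deeper stages and no induced plumbings of fourth-stage discs into lower stages. (Distorted towers, Definition~\ref{definition:distorted-tower}, are needed in the paper only for Theorem~\ref{theorem:main-distorted-casson-tower-link-example}.) Finally, remember that the ramified double is in general a multi-component link while a Casson tower has a single attaching circle, so you must build one tower per component, disjointly, and apply Theorem~\ref{theorem:main} componentwise; Lemma~\ref{lemma:Wh-plumbed-handles} supplies the framed, disjoint plumbed handles that make this bookkeeping and the $0$-framing verification go through.
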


Freedman showed that the unramified $\Wh_3$, and consequently
unramified $\Wh_n$ for $n\ge 3$, are slice~\cite{Freedman:1988-2}.
Although Theorem~\ref{theorem:main-ramified-wh4} is for $n\ge 4$ only,
the ramified $n=4$ case gives new slice links.  The case $n \ge 5$ was
shown by Gompf-Singh~\cite{gompf-singh-1984}.

As a second application of Theorem~\ref{theorem:main}, we prove the
following:

\begin{theoremalpha}
  \label{theorem:main-distorted-casson-tower-link-example}
  The link in Figure~\ref{figure:distorted-tower-slice-link} is slice.
\end{theoremalpha}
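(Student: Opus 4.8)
The plan is to exhibit the link $L$ in Figure~\ref{figure:distorted-tower-slice-link} as the attaching circle $C(T)$ of a distorted Casson tower $T$ of height~$4$ embedded in $D^4$ (in fact, of height $4$ or higher, as slicing discs only get easier), and then to quote Theorem~\ref{theorem:main} directly. So the entire content is: build the right geometric object and recognise it.

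First I would read off from the Kirby diagram (cf.\ Section~\ref{subsection:kirby-diagram-for-distorted-casson-tower}, which gives the dictionary between Casson towers and framed link diagrams) how the diagram in Figure~\ref{figure:distorted-tower-slice-link} encodes a distorted Casson tower. Concretely, each stage of a Casson tower corresponds to a collection of dotted circles (the cores of the stage discs, drawn as $1$-handles) together with the kinks and plumbings recorded as clasps/linking between successive layers; a \emph{distorted} tower additionally allows the top-stage discs to be plumbed into stage-$2$-or-higher discs rather than only into stage-$(k-1)$ discs, and these extra plumbings show up as additional clasps in the diagram. I would identify the four layers of dotted circles, check that the clasping pattern between consecutive layers is exactly that of kinked, plumbed immersed discs (so that layers $1,2,3$ form an honest height-$3$ portion), and then verify that the top (fourth) layer consists of discs whose plumbings land in layers of index $\ge 2$, matching Definition~\ref{definition:distorted-tower}. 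Simultaneously I would check that the leftover component of the link—the one that is \emph{not} a dotted circle—is precisely $C(T)$, the framed attaching circle sitting in $\partial(D^4)=S^3$, with the correct ($0$-)framing.

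Once $T$ is recognised as a distorted height-$4$ Casson tower embedded in $D^4$ with $\partial_-(T)\subset S^3$ and attaching curve $C(T)=L$, Theorem~\ref{theorem:main} applies verbatim: it produces a flat topologically embedded disc in $T\subset D^4$ bounded by $C(T)=L$ as a framed manifold. Hence $L$ is slice. If $L$ has several components, I would arrange that the diagram decomposes as a disjoint union of such towers $T=\bigsqcup T_i$ inside $D^4$ (the towers can be isotoped to be disjoint since the diagram presents them disjointly), apply Theorem~\ref{theorem:main} to each $T_i$, and note the resulting discs are automatically disjointly embedded because they live in disjoint regular neighbourhoods $N(T_i)$; the framings match by construction, so the framed link $L$ bounds disjoint framed flat discs in $D^4$.

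The main obstacle is purely one of \emph{verification and bookkeeping}: correctly translating the pictorial data of Figure~\ref{figure:distorted-tower-slice-link} into the formal combinatorial structure of Definition~\ref{definition:distorted-tower}, i.e.\ making sure that every clasp in the figure is accounted for either as a kink (self-plumbing) or as an inter-stage plumbing of the allowed type, that nothing forces a plumbing of a \emph{lower} stage disc into a higher one (which would not be a distorted Casson tower), and that the unknotting/framing conventions are consistent so that the unmarked component really is $C(T)$ with its given framing. No genuinely new mathematics is needed beyond Theorem~\ref{theorem:main}; the proof is essentially the sentence ``Figure~\ref{figure:distorted-tower-slice-link} is a height-$4$ distorted Casson tower, so apply Theorem~\ref{theorem:main},'' fleshed out with the Kirby-calculus identification.
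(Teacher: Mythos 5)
Your overall instinct---recognise a height-$4$ distorted Casson tower in the picture and invoke Theorem~\ref{theorem:main}---is the right starting point, but there is a genuine gap in how you treat the components of the link. The link of Figure~\ref{figure:distorted-tower-slice-link} is not the attaching circle of a tower: $C(T)$ is a single circle, whereas the link has several components. The other components are the \emph{dotted circles} of a Kirby diagram of the distorted tower in which all 2-handles have been eliminated (the bottom-to-top elimination of Section~\ref{subsection:kirby-diagram-for-distorted-casson-tower}; compare Figure~\ref{figure:elimination-in-casson-tower-diagram-3}). As written, your argument either slices only the one non-dotted component, or falls back on the claim that the diagram splits as a disjoint union of towers $\bigsqcup T_i$ so that Theorem~\ref{theorem:main} can be applied componentwise; that fallback fails, since the components are clasped together through the twist boxes and do not form a split link, and the dotted components are not attaching circles of separate towers at all.

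The missing idea, which is how the paper argues (Theorem~\ref{theorem:distorted-casson-tower-slicing} via Lemma~\ref{lemma:distorted-casson-tower-slicing}), is to exploit what the dotted-circle notation means. The dotted circles form an unlink, hence bound standard disjoint slice discs $\Delta_i$ in $D^4$, and by definition of 1-handle notation the distorted tower $T$ sits embedded in $D^4$ precisely as the exterior of these discs, with $\partial_-(T)\subset S^3$ and attaching circle $C(T)$ the remaining component. Theorem~\ref{theorem:main} then gives a flat embedded disc $\Delta\subset T$ bounded by $C(T)$, and since $\Delta$ lies in the exterior of the $\Delta_i$ it is automatically disjoint from them; the discs $\Delta,\Delta_1,\Delta_2,\dots$ together slice \emph{every} component of the link simultaneously. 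You would also still owe the bookkeeping that Figure~\ref{figure:distorted-tower-slice-link} really is such a diagram (in the paper it arises from the height-$4$ tower with one self-plumbing per stage and one distorting intersection between the stage-$4$ and stage-$2$ discs, after the 2-handle elimination), but the essential missing step in your proposal is the treatment of the dotted components and the identification of $T$ with the complement of their standard slice discs.
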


In fact, more generally, we specify a class of slice links, related to
distorted Casson towers of height 4, which contains the link in
Figure~\ref{figure:distorted-tower-slice-link}.  See
Section~\ref{subsection:distorted-iterated-ramified-whitehead-double},
and particularly Theorem~\ref{theorem:distorted-casson-tower-slicing}.

\begin{figure}[t]
  \labellist
  \footnotesize
  \pinlabel{\rotatebox{20}{$-2$}} at 61 185
  \pinlabel{\rotatebox{65}{$-2$}} at 11 150
  \endlabellist
  \includegraphics[scale=.82]{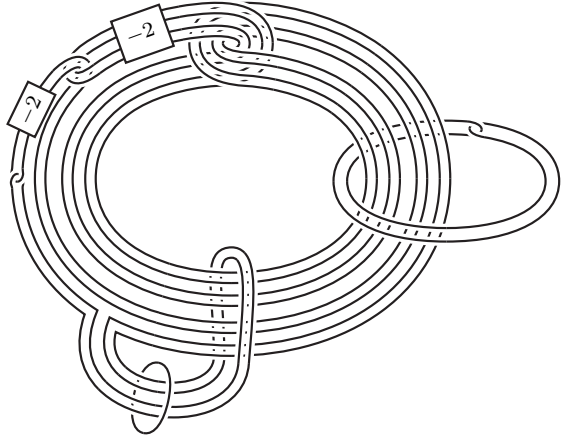}
  \caption{A slice link obtained from a distorted Casson tower of
    height~4.  Each box labeled $-2$ designates two left-handed full
    twists.}
  \label{figure:distorted-tower-slice-link}
\end{figure}

%We also prove, using Theorem~\ref{theorem:main-height-2}, another slicing result, namely that links in a certain class of links, which we call ``height two homotopically trivial,'' have slice Whitehead doubles.  This may be compared with the Topological Whitehead Double Conjecture~\ref{conjecture:topological-whitehead-double}, which would require deleting the words ``height two'' from the hypothesis.  See Theorem~\ref{theorem:whitehead-double-height-2-HT-slice} in Section~\ref{subsection:whitehead-double-height-2-HT} for a discussion.

We finish the introduction with a couple of additional remarks.  Our
proofs of slicing results are more geometric and direct than many
previous applications of the disc embedding theorem to slicing knots
and links.  Our method is similar in character to Kervaire and
Levine's programme~\cite{Kervaire:1965-1, Levine:1969-1} for high
dimensional knots, in that we construct the \emph{slice discs}
in~$D^4$, while most previous slicing results (e.g.\
\cite{Freedman:1988-2, Freedman-Teichner:1995-2,
  Friedl-Teichner:2005-1, Cochran-Friedl-Teichner:2006-1,
  Davis:2006-1}) proceed by first constructing a slice disc
\emph{exterior} using 4-dimensional topological surgery, following the homology surgery slicing strategy of
Cappell-Shaneson~\cite{Cappell-Shaneson:1974-1}.  The only exception
known to the authors is an alternative proof of Garoufalidis-Teichner
that Alexander polynomial one knots are
slice~\cite{Garoufalidis-Teichner}.  Of course there is a reason for
this: the surgery method is often remarkably effective.

Recall that while one can visualise a ribbon disc, (for example by
drawing ``movie pictures'' of cross sections), it is nigh on
impossible to visualise a slice disc for a knot or link which is
topologically but not smoothly slice.  Using the slicing theorems of
this paper, one can at least understand a neighbourhood, in the
4-ball, used in the construction of the disc.  The reader may also
perhaps see it as virtuous, when constructing slice discs, to minimise
the number of times Freedman's disc embedding theorem is used; the
slicing constructions of this paper only use it once per slice disc.
This can be contrasted with the number of Freedman discs required to
employ topological surgery and $h$-cobordism when using the homology
surgery method, or even in
Garoufalidis-Teichner~\cite{Garoufalidis-Teichner}.

\subsubsection*{Organisation of the paper}

In Section~\ref{section:definitions} we give preliminary definitions
of Casson towers, gropes, and related objects.  In
Section~\ref{section:from-gropes-to-disc}, we prove that the existence
of a height 1.5 grope with a certain fundamental group condition gives
rise to a flat embedded disc with the desired boundary.  In
Section~\ref{section:casson-towers} we prove our main disc embedding
results for Casson towers: Theorems~\ref{theorem:main},
\ref{theorem:main-height-3}, and~\ref{theorem:main-height-2}\@.
Sections~\ref{section:slice-knots} and~\ref{section:slice-links}
present new slice knots and links respectively.
Section~\ref{section:casson-tower-3-infinite-grope} discusses the
grope filtration of knots and Casson towers.

\subsubsection*{Acknowledgements}

The authors would like to thank Kent Orr and Peter Teichner for some
very helpful conversations and suggestions.  Wojciech Politarczyk and
the second author worked together on the combinatorics chapter for the
Freedman lecture notes, from which the proof of
Lemma~\ref{lemma:grope-height-raising} is derived, and the second
author gained a great deal of understanding from this collaboration.
We also thank the referees for their very useful comments.  The first
author was partially supported by NRF grants 2013067043 and
2013053914.

\section{Preliminary definitions}\label{section:definitions}

All 4-manifolds in this paper are compact and oriented.  First we give
the definitions of a Casson tower and a grope, which are the two main
objects we will be working with.

\begin{definition}[\cite{Casson-1986-towers}]
  \label{definition:casson-tower}
  A \emph{Casson tower} is a 4-manifold $T$ with a framed embedded
  circle $C=C(T)$ in its boundary, defined as below.
   We write $\partial T$ as a union of
  two codimension zero submanifolds $\partial_+$ and $\partial_-$,
  where $\partial_-$ is a (closed) tubular neighbourhood of $C$ in
  $\partial T$ and $\partial_+ = \overline{\partial T\sm \partial_-}$.
  We call $C$ and $\partial_-$ the \emph{attaching circle} and the
  \emph{attaching region} respectively.

  A Casson tower has a height $n \in \mathbb{N}$.  A \emph{Casson
    tower $T_1$ of height 1}, which we will also call a \emph{plumbed
    handle}, is a thickened disc $D^2 \times D^2$, with
  $C:=\partial D^2\times 0$, $\partial_- := \partial D^2 \times D^2$,
  and with some number of self-plumbings performed in the interior of
  $D^2\times 0$.  A self-plumbing is performed by taking two discs
  $D_1, D_2 \subset \Int D^2\times 0$ and then
  identifying $D_1\times D^2$ and $D_2\times D^2$, viewing each 2-disc
  as the unit disc in~$\mathbb{C}$, via $(z,w)\sim (w,z)$ to produce a
  positive self-intersection of $D^2\times 0$, or
  $(z,w)\sim (\bar w, \bar z)$ to produce a negative
  self-intersection.  The \emph{core} disc is defined to be the image
  of $D^2\times 0$ in $T_1$, which is now an immersed disc.  The
  attaching circle $C$ is framed by the restriction of the unique
  framing of $D^2\times 0$ \emph{before} plumbing.  Equivalently, if
  we perform $k_+$ positive and $k_-$ negative plumbings, then the
  framing on $C$ is obtained by twisting the restriction to $C$ of the
  unique framing of the core disc in $T_1$ by $k_--k_+$.  To each
  double point of the core disc, there is an associated \emph{double
    point loop} on the core disc, which departs the double point into
  a sheet and comes back through the other sheet, avoiding all other
  double points.  Isotope this loop to obtain an embedded circle in
  $\partial_+$, which we call a \emph{double point loop
    in~$\partial_+$}.  We assume that double point loops are disjoint.
  Double point loops are framed in such a way
  % that the pair of the plumbed handle with 2-handles attached along
  % this framing and the attaching circle is diffeomorphic to the pair
  % of the 4-ball and the zero framed unknot in its boundary.
  there is a diffeomorphism of the plumbed handle with 2-handles
  attached along this framing to the 4-ball which takes the attaching
  circle to the zero framed unknot in the 3-sphere.  There is a unique
  such framing.

  The framings can be explicitly described using a standard Kirby
  diagram of a plumbed handle, shown in
  Figure~\ref{fig:plumbed-handle-diagram}: the circles $a_i$ are
  double point loops on $\partial_+$, and the framings on $C$ and
  $a_i$ described above are the zero-framing in
  Figure~\ref{fig:plumbed-handle-diagram} (see \cite[Lemma~2 of
  Lecture~I]{Casson-1986-towers}).

  \begin{figure}[H]
    \labellist
    \small
    \pinlabel {$C$} at 14 7
    \pinlabel {$a_1$} at 15 87
    \pinlabel {$a_2$} at 78 87
    \pinlabel {$a_k$} at 158 87
    \endlabellist
    \includegraphics[scale=1]{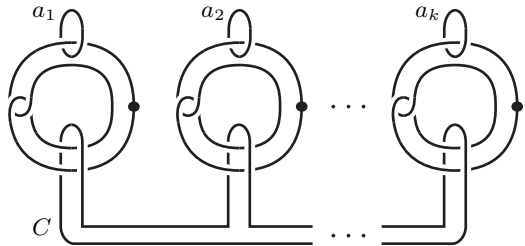}
    \caption{A standard Kirby diagram of a plumbed handle, together
      with the attaching circle $C$ and double point loops $a_i$ on
      the boundary of the plumbed handle.  Each plumbing corresponds
      to a dotted circle, where the sign of the plumbing determines
      the sign of the clasp.}
    \label{fig:plumbed-handle-diagram}
  \end{figure}

  For $n \geq 2$, a Casson tower of height $n$, denoted $T_n$, is
  constructed inductively by taking a height one Casson tower and, for
  each double point loop, identifying a neighbourhood of the double
  point loop on $\partial_+$ with the attaching region of some Casson
  tower $T_{n-1}$ of height $n-1$, along the preferred framings of the
  double point loop and $C(T_{n-1})$. A different height $n-1$ Casson
  tower may be used for each double point loop.  The attaching circle
  $C(T_n)$ and the attaching region $\partial_-$ of the new Casson
  tower $T_n$ are just those of the original height one tower.
\end{definition}

\begin{definition}\label{definition:stages-notation}
  The \emph{$k$th stage} of a Casson tower $T_n$ is the material that
  was introduced by the $(n-k+1)$th inductive step in the
  construction, where taking a height one tower counts as the first
  step.  Following~\cite{Freedman:1982-1}, we denote the union of the
  stages of a Casson tower $T$ from $p$ through $q$ inclusive
  by~$T_{p\ed q}$.
\end{definition}

\begin{definition}[\cite{Freedman-Quinn:1990-1,
    Freedman-Teichner:1995-1}]
  \label{definition:grope}
  A \emph{grope} of height $n$ ($n\in \mathbb{N}$) is a pair
  (2-complex, base circles) of a certain type described inductively
  below.  A \emph{grope of height $1$} is a disjoint union of oriented
  connected surfaces each of which has connected nonempty boundary.
  The boundary circles are the base circles.  Take a grope $G_1$ of
  height 1, and let $\{\alpha_1,\ldots,\alpha_{2g}\}$ be a standard
  symplectic basis of circles for the first homology of~$G_1$. Then a
  \emph{grope of height $n+1$} is formed by, for each $i$, attaching
  some grope of height $n$ with a single base circle to $G_1$,
  identifying the base circle of this height $n$ grope
  with~$\alpha_i$.  A different height $n$ grope may be used for each
  $\alpha_i$.  The base circles of the height $n+1$ grope are defined
  to be the base circles of $G_1$.  We often call the base circles the
  \emph{boundary} of the grope.  The \emph{$k$th stage} of the grope
  is the union of the surfaces that were introduced by the $(n-k+1)$th
  inductive step in the construction, where taking a height one grope
  counts as the first step.  We denote a grope of height~$n$ by~$G_n$,
  and denote the stages from~$p$ through~$q$ inclusive by~$G_{p\ed
    q}$.

  A \emph{capped grope} is constructed by attaching a disc to each of
  a symplectic basis of curves for the top stage surfaces.  These
  discs are referred to as the \emph{caps}.  The surface stages are
  called the \emph{body} of the grope.  A capped grope of height $n$,
  sometimes also known as a \emph{capped grope with $n$~surface
    stages}, will be denoted $G_n^c$.
\end{definition}

We note that, in this paper, a capped grope has a multi-component body
in general.  It is called a \emph{union-of-discs-like} capped grope
in~\cite{Freedman-Quinn:1990-1}.  As a special case, if the body is
connected, it is called a \emph{disc-like} capped grope.  A 2-complex
obtained by attaching 2-discs to capped gropes along each boundary
component is called a \emph{union-of-spheres-like} capped grope.
These only differ from union-of-discs-like capped gropes in that the
bottom stage surfaces are closed.  In this paper a capped grope refers
by default to a union-of-discs-like capped grope.  On the other hand,
in the sequel, we will construct ``transverse'' capped gropes by
glueing together union-of-discs-like capped gropes.  Transverse capped
gropes are always union-of-spheres-like.

As shown in Figure~\ref{figure:grope-example}, a capped grope has a
standard model embedded in~$\R^3$.
%We use this to thicken and frame gropes so they are 4-dimensional objects.

\begin{figure}[H]
  \includegraphics[scale=.95]{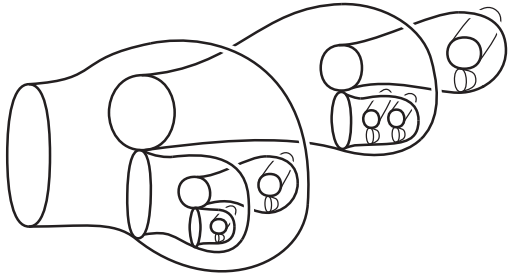}
  \caption{A standard model of a capped grope of height 3 in~$\R^3$.}
  \label{figure:grope-example}
\end{figure}

\begin{definition}
  \label{definition:framing-of-gropes}
  Start with a model (capped) grope embedded in $\R^3$, and embed this
  in $\R^4$ via $\R^3 \hookrightarrow \R^3 \times \R \cong \R^4$.
  Take a thickening of the model in $\R^4$.  We refer to this
  thickening as a \emph{framed (capped) grope}, which is a compact
  4-manifold with boundary.  The \emph{attaching region} $\partial_-$
  of a framed (capped) grope is the compact 3-manifold with boundary
  given by thickening the base circles of the bottom surface stage.
  The attaching region is a submanifold of the boundary of the framed
  (capped) grope.
\end{definition}

Note that each surface and disc component of a model (capped) grope in
$\R^4$ has a canonical framing of its normal bundle given by taking a
1-dimensional framing in $\R^3$, and extending via the trivial line
bundle when we take the product with~$\R$.

We always regard the base circles of a framed (capped) grope as framed
circles endowed with the induced framing.  Similarly, the symplectic
basis curves of the top stage surfaces of a framed grope have an
induced framing.  In the case of a framed capped grope, the framing
for the symplectic basis curves is equal to the restriction of the
unique framing of the caps.

The next definition is of proper immersions.  Briefly, a properly
immersed grope has embedded body, and immersed caps which are disjoint
from the body.

\begin{definition}[\cite{Freedman-Quinn:1990-1}]
  \label{definition:proper-immerssed-capped-grope}
  Take a model framed capped grope as in
  Definition~\ref{definition:framing-of-gropes}, and introduce
  plumbings into the model, by plumbing together caps and introducing
  self-plumbings in caps.  A \emph{proper immersion} of a capped grope
  into a 4-manifold $M$ with boundary is an embedding of this plumbed
  model into $M$ such that the attaching region $\partial_-$ maps
  to~$\partial M$.
\end{definition}

We remark that for a proper immersion it is allowed to plumb two caps
attached to any, possibly different, body components.

We will also denote a framed grope of height~$n$ by~$G_n$, and a
framed capped grope of height $n$ by~$G_n^c$.  From now on when we
refer to a capped grope, we will mean a framed capped grope, often a
properly immersed framed capped grope.  However we will also refer to
geometric operations, such as tubing and taking parallel copies, on
surfaces which are part of the underlying 2-complex, hopefully without
causing confusion.  When there is concern about ambiguity, we will
denote a (further) thickening of a (framed) capped grope by~$\nu
G^c_n$.

We will be interested in improving a capped grope to a \emph{one storey
  capped tower}.  Briefly, a proper immersion of a one storey capped
tower is a capped grope with caps for the caps, that is, discs bounded
by the double point loops of the caps.  The first layer of caps should
have self-intersections only.  The second layer of caps, called the
\emph{tower caps}, must be disjoint from the body \emph{and} the caps
of the capped grope.  The official definition is next.

\begin{definition}[\cite{Freedman-Quinn:1990-1}]
  \label{definition:one-story-capped-tower}
  A \emph{one storey capped tower} is obtained from a model framed
  capped grope by performing finger moves that introduce
  self-plumbings into the caps, and then by adjoining disjoint Whitney
  discs and accessory discs.  We call the Whitney discs and accessory
  discs the \emph{tower caps}.  A \emph{proper immersion} of a one
  storey capped tower into a 4-manifold~$M$ is obtained by introducing
  plumbings (not necessarily just self-plumbings) into the tower caps
  and then embedding the plumbed model into~$M$.  Note that tower caps
  still miss the entire capped grope.
\end{definition}

A reference for finger moves, Whitney discs, and accessory discs for
readers not familiar with them is \cite[Sections~1.5
and~3.1]{Freedman-Quinn:1990-1}.  In this paper, we do not need their
definitions nor do we need any properties of tower caps, since we are
not going to work with capped towers. Rather, once we have one we will
observe (Lemma~\ref{lemma:capped-grope-to-one-story-tower} and
Theorem~\ref{theorem:one-story-tower-four-stages-implies-disc}) that
this is sufficient to activate the Freedman-Quinn machine and produce
an embedded disc.

\section{Obtaining a disc from a framed capped grope}
\label{section:from-gropes-to-disc}

This section proves Theorem~\ref{theorem:grope-to-disc-main} below,
which is our main technical result.  It sharpens the minimal grope
data required to produce a disc, and will be used to deduce all of our
various statements on Casson towers.  In order to state the theorem we
will introduce some terminology and background.

% We will use the main theorem of \cite{Krushkal-Quinn:2000-1}, stated
% as Theorem~\ref{theorem:krushkal-quinn} below.

%   The operation of
% \emph{asymmetric contraction} on a framed capped grope of height $n$
% is performed by using one cap from each dual pair of caps to surger
% the surface to which it is attached.  The other caps are forgotten.
% The \emph{total asymmetric contraction} of a capped grope is the union
% of discs obtained after the operation of asymmetric contraction, to
% create a capped grope with height lowered by one, has been performed
% sequentially on each new capped grope created by this operation, until
% there are no more caps to contract.

\begin{definition}[\cite{Freedman-Teichner:1995-1}]
  \label{definition:good-group}
  We say that a group $\pi$ is \emph{good} if for any properly
  immersed disc-like capped grope $G^c$ of height 2 (obtained from a
  model framed capped grope by plumbing) and for any homomorphism
  $\pi_1(G^c) \to \pi$, there is an immersed disc in $G^c$ whose
  framed boundary is equal to the base circle of~$G^c$ and whose
  double point loops have trivial image in~$\pi$.

  % For such $\pi$, we call the smallest $m\in\mathbb{N}$ satisfying the
  % above the \emph{\pnd-height} of~$\pi$.

  % We say that a group $\pi$ is a \emph{$\pi_1$-null-disc
  % \textup{(}\pnd\textup{)} group} if there exists
  % $m \in \mathbb{N}$, such that given any $4$-manifold $W$ and any
  % properly immersed framed capped grope $G_n^c \hookrightarrow W$ of
  % height at least $m$, together with a homomorphism
  % $\rho \colon \pi_1(W) \to \pi$, the following holds: the total
  % asymmetric contraction of $G_n^c$ is regularly homotopic rel.\
  % boundary to an immersion of a union of discs, whose double point
  % loops have trivial image in~$\pi$.
\end{definition}

% We remark that the above defining condition holds if and only if it
% holds for the special case that $W$ is a regular neighbourhood
% of~$G_n^c$.
% % (so that $\pi_1(W)\to \pi$ can be viewed as $\pi_1(G^c_n)\to \pi$).

In the book of Freedman and Quinn~\cite{Freedman-Quinn:1990-1}, a
different definition is used. They call a group good if the disc
embedding theorem \cite[p.~5]{Freedman-Quinn:1990-1} holds.
Definition~\ref{definition:good-group}, which is from
\cite{Freedman-Teichner:1995-1}, describes a (potentially) smaller
class of groups than the definition of~\cite{Freedman-Quinn:1990-1}.
A good group in the sense of Definition~\ref{definition:good-group} is
sometimes called a \emph{Null Disc Lemma (NDL) group}; see for
example~\cite{Kirby-Taylor:2001-1}.

% In \cite{Freedman-Teichner:1995-1}, a good group is defined to be a
% group for which $\pi_1$ null disc lemma
% \cite[p.~510]{Freedman-Teichner:1995-1} holds.  (This is also called
% an \emph{NDL group}; e.g., see~\cite{Kirby-Taylor:2001-1}.)  The
% latter implies the former.  Definition~\ref{definition:pi-1-nd} is
% close to the notion of a good group in \cite{Freedman-Teichner:1995-1}
% but stronger in that Definition~\ref{definition:pi-1-nd} requires the
% $\pi_1$ null disc property for capped groups with
% \emph{multi-component} body.  This is important in our proof \ldots
% Oh, I've just realized that it suffices, for our purpose, to use the
% disc-like case only, without using union-of-discs-like case!

\begin{definition}
Following \cite[Section~3]{Krushkal-Quinn:2000-1}, a group $\pi$ has
\emph{subexponential growth} if given any finite subset
$S \subseteq \pi$, there is an integer~$n$ such that the set of all
products of elements of~$S$ with length~$n$ determine fewer than $2^n$
elements of the group~$\pi$.
\end{definition}

\begin{theorem}[\cite{Freedman-Teichner:1995-1,Krushkal-Quinn:2000-1}]
  \label{theorem:krushkal-quinn}
  Any group of subexponential growth is a good group.
\end{theorem}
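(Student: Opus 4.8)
The plan is to reduce the defining property of a good group in Definition~\ref{definition:good-group} --- the Null Disc Lemma --- to the problem of producing, inside the given capped grope, a new capped grope which is \emph{$\pi_1$-null over $\pi$}, and then to use subexponential growth to solve that problem by a counting argument run against grope height raising. The subexponential growth hypothesis will enter only in the last step, which is precisely where the argument fails for a general group.

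First I would make the reduction to $\pi_1$-nullity. Let $G^c$ be a properly immersed disc-like capped grope of height~$2$ and let $\varphi\colon\pi_1(G^c)\to\pi$ be a homomorphism. Call a capped grope \emph{$\pi_1$-null} if the double point loops of its caps, together with the loops formed by the intersections of its caps with its body, all lie in $\ker\varphi$. I claim it suffices to construct, rel boundary and inside $G^c$, a $\pi_1$-null capped grope $H^c$: performing symmetric contraction on $H^c$ one surface stage at a time yields an immersed disc in $G^c$ whose framed boundary is the base circle of $G^c$ and whose double point loops are products of conjugates of the double point loops and cap--body intersection loops of $H^c$, hence lie in $\ker\varphi$. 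This is exactly the output required by Definition~\ref{definition:good-group}, so all the content is in the construction of $H^c$.

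Next I would apply grope height raising (see Lemma~\ref{lemma:grope-height-raising} below) to replace $G^c$, rel boundary, by a capped grope of any prescribed height~$n$, at the expense of introducing more caps carrying more self-plumbings; the number of branches of such a grope grows exponentially in~$n$. Fix a finite generating set $S$ of the subgroup $\varphi(\pi_1(G^c))\le\pi$. Using subexponential growth, choose $n$ so that the number of elements of $\pi$ expressible as a product of at most $n$ elements of $S$ is strictly less than~$2^n$. Reading off the double point loops of the top-stage caps along the roughly $2^n$ branches of the height-$n$ grope then forces two such loops, sitting on parallel branches, to represent the \emph{same} element of~$\pi$. The geometric heart of the Freedman--Teichner argument, with the subexponential sharpening carried out by Krushkal--Quinn, is that such a coincidence can be traded --- by pushing the relevant parallel sheets together and cancelling --- for a strict reduction in the multiset of nontrivial group elements carried by the double point loops. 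Iterating this ``dwindling'' procedure, and verifying that it terminates, produces the desired $\pi_1$-null capped grope $H^c$.

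The step I expect to be the main obstacle is the last one: converting a pigeonhole coincidence of elements of~$\pi$ into an honest geometric cancellation that strictly decreases a suitable complexity, and checking that the resulting iteration converges. This is the technical core of \cite{Freedman-Teichner:1995-1} and \cite{Krushkal-Quinn:2000-1}; by contrast, the reduction to $\pi_1$-nullity via contraction, and the height raising step, are formal grope combinatorics in the spirit of \cite{Freedman-Quinn:1990-1}.
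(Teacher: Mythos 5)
The paper offers no proof of this statement: Theorem~\ref{theorem:krushkal-quinn} is imported wholesale from \cite{Freedman-Teichner:1995-1} and \cite{Krushkal-Quinn:2000-1} and used as a black box (alongside the closure of goodness under extensions and direct limits), so there is no internal argument to compare yours with. What you have written is an outline of that external proof, and as a proof it has a genuine gap which you yourself flag: the entire content of the theorem sits in the step you defer, namely converting the pigeonhole coincidence of elements of $\pi$ into a geometric cancellation that strictly decreases a complexity and showing that the iteration terminates in a capped grope whose relevant loops die in $\pi$. The steps you do carry out---reducing Definition~\ref{definition:good-group} to $\pi_1$-nullity over $\pi$ via symmetric contraction, and raising height as in Lemma~\ref{lemma:grope-height-raising}---are formal and valid for \emph{every} group, so subexponential growth has done no verifiable work by the point your argument stops.

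Two further points where the sketch is not yet an argument. First, your counting step assumes that the double point loops read off along the roughly $2^n$ branches of the height-$n$ grope are represented by words of length at most $n$ in the chosen generating set $S$; but the height-raising and contraction/push-off operations conjugate and multiply loops, so a priori the word lengths are uncontrolled. Establishing this length control is precisely the ``controlled'' refinement of \cite{Krushkal-Quinn:2000-1}, and without it the pigeonhole comparison of a subexponential count against $2^n$ branches does not get off the ground. Second, a minor inaccuracy: in a properly immersed capped grope the caps are disjoint from the body by definition, so the ``cap--body intersection loops'' you propose to track only arise in intermediate, non-properly-immersed stages of the construction and need to be handled there rather than in the statement of your $\pi_1$-nullity reduction. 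In short, the proposal is a reasonable road map of the Freedman--Teichner/Krushkal--Quinn proof, but the technical core is named rather than proved, so it does not constitute a proof of the theorem.
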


In addition, the class of good groups is closed under extensions and
direct limits~\cite[Lemma~1.2]{Freedman-Teichner:1995-1}.

For $n \in \mathbb{N}$, a \emph{capped grope of height $n.5$} is
constructed by attaching a height $n$ capped grope to one curve from
each dual pair of curves in a symplectic basis for the first homology
of a height one grope, and then attaching height $n-1$ capped gropes
to the remaining curves.  (By convention, a height $0$ capped grope is
a union of discs.)
A \emph{proper immersion} is defined by allowing plumbings of the
caps, similarly to the height~$n$ case.

Now we state the main theorem of this section.  The theorem is
probably known to the experts, but a detailed proof has not appeared.

\begin{theorem}[Disc Embedding for Capped Gropes of Height $\ge 1.5$]
  \label{theorem:grope-to-disc-main}
  For some $n \in \frac{1}{2}\mathbb{N}$ which is at least $1.5$, let
  $(G^c_{n}, \partial_-) \rightarrow (M,\partial M)$ be a properly
  immersed capped grope of height $n$ in a 4-manifold $M$, and let $\nu
  G^c_n$ be a further thickening of $G^c_n$. Suppose that the image of
  the inclusion induced map
  \[
  \pi_1(\nu G^c_n \sm G_{1\ed 1}, *) \to \pi_1(M\sm G_{1\ed 1}, *)
  \]
  is a good group, for all choices of basepoint $*$ in
  $\nu G^c_n \sm G_{1\ed 1}$.  Then there are disjoint flat embedded
  discs in $M$ with the same framed boundary as~$G^c_{n}$.
\end{theorem}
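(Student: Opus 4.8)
The strategy is to start from the properly immersed capped grope $G^c_n$ of height $n \ge 1.5$ and successively improve it to a one storey capped tower with four surface stages, which by Lemma~\ref{lemma:capped-grope-to-one-story-tower} and Theorem~\ref{theorem:one-story-tower-four-stages-implies-disc} then yields the desired embedded flat discs with boundary equal to that of $G^c_n$. The main tool is a refined height-raising argument, keeping careful track of the fact that the bottom-stage surface $G_{1\ed 1}$ must be kept fixed and that the relevant fundamental group condition is only imposed on the complement of $G_{1\ed 1}$, not of all of $G^c_n$.

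First I would reduce to the case $n = 1.5$: if $n > 1.5$, a capped grope of height $n$ contains, in an obvious way, a capped grope of height $1.5$ with the same bottom stage and the same attaching region, and the hypothesis on the image of $\pi_1(\nu G^c_n \sm G_{1\ed 1}) \to \pi_1(M \sm G_{1\ed 1})$ passes to the smaller grope since its thickening embeds in $\nu G^c_n \sm G_{1\ed 1}$. So assume $n = 1.5$. Next, applying the Grope Height Raising Lemma~\ref{lemma:grope-height-raising} to the height $1.5$ capped grope — performing surgery on the caps against their algebraic duals and pushing into higher stages, all while leaving the first stage surface untouched — I obtain a properly immersed capped grope of height, say, $4.5$ (or as high as needed for the subsequent tower-building step), again with bottom stage $G_{1\ed 1}$ unchanged. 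Since only the interior stages and caps are altered, the thickening of the new grope still sits inside $\nu G_{1.5}^c \sm G_{1\ed 1}$ pushed off, so the good-group hypothesis is preserved verbatim.

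With a tall capped grope in hand and the good-group hypothesis, I would now contract caps against their duals to perform the standard Casson–Freedman "grope to capped tower" conversion: using the Cap Separation Lemma~\ref{lemma:cap-separation} to arrange the caps so that they intersect the body and each other in a controlled way, and then using the good-group hypothesis to find, in the complement of $G_{1\ed 1}$, the $\pi_1$-null immersed discs (Whitney and accessory discs) that serve as a second layer of caps. This is exactly the data of a one storey capped tower with four surface stages in the sense of Definition~\ref{definition:one-story-capped-tower}. The key point is that all the surgeries and cap replacements can be done in a regular neighbourhood of the body-plus-first-layer-caps minus the first stage surface, so that the homomorphism whose image must be good is precisely the one hypothesised. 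Finally, feeding this one storey capped tower into the Freedman–Quinn machine via Theorem~\ref{theorem:one-story-tower-four-stages-implies-disc} produces disjoint flat embedded discs, and by construction these have the same framed boundary as $G^c_n$, since the attaching region and base circles were fixed throughout.

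The step I expect to be the main obstacle is the bookkeeping in the middle: arranging, after height raising, that the caps can be separated and that the second-layer caps can be found with the required $\pi_1$-triviality \emph{relative to $G_{1\ed 1}$ rather than relative to the whole grope}. Keeping the first stage surface out of the picture is essential — it is what makes the hypothesis usable in the applications to Casson towers — but it means one cannot simply quote the textbook disc embedding theorem; one must verify that every geometric move (the Cap Separation Lemma, the finger moves introducing self-plumbings into the caps, the application of the good-group property) takes place in $M \sm G_{1\ed 1}$ and induces the stated homomorphism on $\pi_1$. The combinatorial care needed to track base points and the images of double point loops through all these moves is where the real work lies.
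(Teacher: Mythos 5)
Your overall architecture matches the paper's proof: reduce to height $1.5$ (by contraction), raise the height, manipulate the caps, invoke the good-group hypothesis away from $G_{1\ed 1}$, and finish with Lemma~\ref{lemma:capped-grope-to-one-story-tower} and Theorem~\ref{theorem:one-story-tower-four-stages-implies-disc}. But the middle step, which you yourself flag as the main obstacle, contains a genuine gap: you propose to ``use the good-group hypothesis to find \dots the Whitney and accessory discs that serve as a second layer of caps.'' That is not what the good-group property provides. By Definition~\ref{definition:good-group} (the null disc lemma), a good group yields, from a properly immersed disc-like height~$2$ capped grope and a homomorphism of its fundamental group to the good group, a single immersed disc bounded by the base circle whose double point loops are trivial in the group; it does not yield tower caps, and it gives no disjointness of any discs from the body and caps of the grope, which Definition~\ref{definition:one-story-capped-tower} requires. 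Tower caps are produced only by the Freedman--Quinn machinery of Lemma~\ref{lemma:capped-grope-to-one-story-tower}, and that lemma has two hypotheses your plan never arranges: $\pi_1$-nullity of the capped grope in the ambient manifold and transverse spheres for its bottom-stage components.

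The paper's proof supplies exactly these ingredients: after raising to height~$6$, one contracts the top stage so that all cap intersections are self-intersections (Step~2), constructs transverse spheres for the stage~$2$ surfaces and makes caps on distinct top-stage surfaces disjoint (Step~3), and then --- the decisive use of the hypothesis --- applies the null disc property inside disjoint neighbourhoods $W_\Sigma \subset \nu G^c_n \sm G_{1\ed 1}$ of the height~$2$ capped gropes sitting above the stage~$3$ surfaces, replacing those upper gropes by immersed discs whose double point loops are null-homotopic in $M \sm G_{1\ed 1}$ (Step~4). This replacement is what makes the remaining height~$2$ capped grope over the stage~$2$ surfaces $\pi_1$-null in $M \sm G_{1\ed 1}$, so that Lemma~\ref{lemma:capped-grope-to-one-story-tower} applies and produces a one storey capped tower with three surface stages, which becomes four surface stages after reattaching $G_{1\ed 1}$. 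Without some version of Steps~2--4, your plan cannot legitimately invoke Lemma~\ref{lemma:capped-grope-to-one-story-tower}, and the good-group hypothesis alone does not bridge that gap. (A smaller inaccuracy: a height~$n$ capped grope does not contain ``in an obvious way'' a height~$1.5$ capped grope with the same bottom stage --- the caps live only on the top stage, so one must contract the upper stages to manufacture caps, as the paper notes.)
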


Recall that the body of a capped grope need not be connected.  We
also remark that in order to check that the hypothesis is true for all
basepoints, it is enough to check it for some choice of basepoint in
each connected component of $\nu G^c_n \sm G_{1\ed 1}$.

Next we state some results from the literature which will be used
during the proof of Theorem~\ref{theorem:grope-to-disc-main}.  In the
following lemma, the phrase ``$G_n^c$ has transverse spheres'' means
that each connected component of the bottom stage of $G_n^c$ has a
transverse sphere intersecting $G_n^c$ in precisely one point, which
lies in that connected component of the bottom stage.  Also, we say
that a properly immersed capped grope $G_n^c$ in $M$ is
\emph{$\pi_1$-null} if any loop in the image of $G_n^c$ is
null-homotopic in~$M$.

\begin{lemma}[{\cite[Section~3.3]{Freedman-Quinn:1990-1}}]
  \label{lemma:capped-grope-to-one-story-tower}
  Suppose $G_n^c \to M$ is a proper immersion of a capped grope of
  height $n$ at least 2, into a 4-manifold $M$, which is $\pi_1$-null
  and has transverse spheres.  Then the embedding of the body of
  $G_n^c$ extends to a proper immersion of a union-of-discs-like one
  storey capped tower with arbitrarily many surface stages.
\end{lemma}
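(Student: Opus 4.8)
The plan is to follow \cite[Section~3.3]{Freedman-Quinn:1990-1}; I sketch the argument. Throughout, the body of $G^c_n$ is left fixed and will become the bottom surface stages of the one storey capped tower, so that all the work takes place in (a thickening of) the caps and in the ambient $M$.

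First I would apply grope height raising (Lemma~\ref{lemma:grope-height-raising}; cf.\ \cite{Freedman-Quinn:1990-1}). Using the transverse spheres, this upgrades $G^c_n$, for any prescribed number of surface stages, to a proper immersion of a capped grope with that many surface stages, still equipped with transverse spheres, still $\pi_1$-null (the new surfaces lie in a thickening of the old caps, hence every loop in the new grope is homotopic in $M$ to a loop in the image of $G^c_n$), and having the body of $G^c_n$ among its bottom stages. This provides the ``arbitrarily many surface stages'' of the conclusion and, more importantly, furnishes the extra stages in which the tower caps will later be cleaned up.

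Next I would put the caps in standard form, meaning that no two distinct caps intersect, so that every cap double point is a self-plumbing. This is the Cap Separation Lemma~\ref{lemma:cap-separation}, whose proof again consumes the transverse spheres: cap--cap intersections are traded, by pushing down through the body and applying the Norman trick against parallel copies of the transverse spheres, for self-plumbings of caps. After this step the capped grope is exactly the standard model for the body-and-caps part of a one storey capped tower.

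It remains to construct the tower caps and to make them disjoint from the whole capped grope; this is the main obstacle. Each double point loop of a (now self-intersecting) cap is a loop in the image of the $\pi_1$-null capped grope, hence null-homotopic in $M$, so by general position in the $4$-manifold it bounds a generically immersed disc; these discs (accessory discs, together with Whitney discs where double points of opposite sign are paired) form a provisional family of tower caps. They may meet one another, which is permitted, but they must be pushed off the capped grope. An intersection of a provisional tower cap with a body surface stage is pushed down the grope to the bottom stage and then removed by tubing into a parallel copy of a transverse sphere, which is where the extra surface stages produced above are used up; an intersection with a cap of the grope is the delicate case, removed after contracting enough top surface stages that the surviving caps acquire geometric duals, against which the Norman trick can again be run. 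Once all such intersections are eliminated the tower caps miss the capped grope, and the union is, by construction, a proper immersion of a union-of-discs-like one storey capped tower extending the embedded body of $G^c_n$, with as many surface stages as we like.
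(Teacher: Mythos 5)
The paper does not reprove this lemma: it is quoted from \cite[Section~3.3]{Freedman-Quinn:1990-1}, and the only in-text commentary is that the first step of Freedman--Quinn's argument is grope height raising, and that their hypothesis of height at least~$3$ may be weakened to height at least~$2$ in light of the improved Grope Height Raising Lemma~\ref{lemma:grope-height-raising}. Your sketch follows that same outline---height raising, putting the caps in order, building tower caps from $\pi_1$-nullity, and cleaning them up against the transverse spheres---so it is consistent with the paper's treatment rather than an alternative to it. Two mechanism points are slightly off, though neither is fatal. First, height raising is not powered by the transverse spheres in the hypothesis: it runs on the internal $+$/$-$ duality of the capped grope, via transverse capped gropes made from parallel copies of the opposite side after the Cap Separation Lemma~\ref{lemma:cap-separation}; the transverse spheres of the hypothesis are reserved for later, when intersections of provisional tower caps (and Whitney discs) with the body are pushed down to the bottom stage and tubed away. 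Second, the Cap Separation Lemma only makes the $+$ side caps disjoint from the $-$ side caps; it does not by itself reduce all cap intersections to self-intersections, which is what Definition~\ref{definition:one-story-capped-tower} requires of the first cap layer. That stronger normalisation is achieved by contracting the top stage surfaces one at a time and pushing the remaining caps off each contraction (as in Step~2 of the proof of Theorem~\ref{theorem:grope-to-disc-main}); this costs one surface stage, which is harmless since height raising has already supplied arbitrarily many. With those two corrections your sketch matches the cited Freedman--Quinn argument.
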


The first step in the proof of
Lemma~\ref{lemma:capped-grope-to-one-story-tower} given in
\cite[Section~3.3]{Freedman-Quinn:1990-1} uses grope height raising
(see Lemma~\ref{lemma:grope-height-raising} below) to find a capped
grope with arbitrary height in a neighbourhood of the given capped
grope.  This capped grope is then improved to a one storey capped
tower.  Freedman and Quinn's statement begins with a proper immersion
of a capped grope of height at least~3.  In our statement we have
replaced height~3 with height~2, in light of
Lemma~\ref{lemma:grope-height-raising} below.

The strategy of the proof of Theorem~\ref{theorem:grope-to-disc-main}
will be to arrange a situation where
Lemma~\ref{lemma:capped-grope-to-one-story-tower} can be applied.  We
will then be able to apply the following theorem
of~\cite{Freedman-Quinn:1990-1}.

\begin{theorem}[Freedman-Quinn]
  \label{theorem:one-story-tower-four-stages-implies-disc}
  A neighbourhood of a properly immersed one storey capped tower with
  at least four surface stages contains an embedded flat
  topological disc with the same framed boundary.
\end{theorem}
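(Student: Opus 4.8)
The plan is to run Freedman's infinite reimbedding-and-shrinking argument, with the given one storey capped tower as the seed. So let $\mathcal{T}$ be a properly immersed one storey capped tower with at least four surface stages, mapped into a $4$-manifold $M$, and let $\nu\mathcal{T}$ be a regular neighbourhood of its image. The first step is a \emph{reimbedding} step: I would show that inside $\nu\mathcal{T}$, rel the attaching region $\partial_-$, there is another one storey capped tower, again with at least four surface stages, whose first surface stage and first layer of caps are genuinely, disjointly embedded, and which is disjoint from a collar of $\partial_-$. The mechanism is the one already underlying Lemma~\ref{lemma:capped-grope-to-one-story-tower}: one uses the tower caps to contract the self-plumbings in the first layer of caps of $\mathcal{T}$, which replaces those caps by parallel duplicates and so produces transverse spheres, and --- because the tower caps were disjoint from the body and from the first caps --- the resulting capped grope is $\pi_1$-null; Grope Height Raising (Lemma~\ref{lemma:grope-height-raising}) then regrows the body, and the Cap Separation Lemma (Lemma~\ref{lemma:cap-separation}) together with a fresh round of finger moves reinstalls the tower caps. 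Each pass consumes only a bounded number of surface stages and regenerates them; four is the least number for which this loop closes up, so it can be iterated indefinitely, yielding a nested sequence $\mathcal{T}=\mathcal{T}_0\supset\mathcal{T}_1\supset\mathcal{T}_2\supset\cdots$ of one storey capped towers in $\nu\mathcal{T}$, all with the same attaching region, in which more and more of the structure of $\mathcal{T}_k$ is genuinely embedded and disjoint as $k$ grows.

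The second step is to analyse the limit. Let $\mathcal{T}_\infty$ be the subset of $\nu\mathcal{T}$ swept out by these genuinely embedded pieces together with a collar; I would identify it, rel $\partial_-$, with a standard thickened disc. The object $\mathcal{T}_\infty$ is the image of a canonical surjection $q\colon D^2\times D^2\to\mathcal{T}_\infty$ fixing $\partial_-$ and collapsing a nested defining sequence of compact subsets of $D^2\times D^2$, whose nondegenerate elements are the ``complementary'' generalised solid tori dictated by the branching of the iterated grope-and-tower pattern (a recursively Bing/Whitehead-doubled family). This decomposition is null --- the diameters of its elements tend to zero --- precisely because at every stage the tower caps miss the body and the lower caps, which is the combinatorial form of $\pi_1$-nullity. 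I would then verify the Bing Shrinking Criterion: for each $\varepsilon>0$ there is a self-homeomorphism of $D^2\times D^2$, supported off $\partial_-$, carrying every element of the defining sequence into an $\varepsilon$-ball while moving no point more than $\varepsilon$. Here the caps do the essential work: a cap on a genus-one piece of a grope stage exhibits the core of the corresponding solid-torus-like decomposition element as unknotted and slice in its complement, which is exactly the input needed to shrink the Bing/Whitehead pattern, just as in the proof of the $4$-dimensional disc embedding theorem. Passing to the limit, $q$ is then a near-homeomorphism, and one concludes that $\mathcal{T}_\infty$, with its collar, is homeomorphic to $D^2\times D^2$ rel $\partial_-$.

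The disc is then the image of $D^2\times\{0\}$ under this homeomorphism: a flat embedded topological disc in $\nu\mathcal{T}\subset M$. Since the reimbedding moves and the final homeomorphism are compatible with the product framings of the surface and cap pieces, the framed boundary of this disc is the attaching circle of $\mathcal{T}$ with its given framing, which is the assertion. The main obstacle is squarely the second step: designing the defining sequence (the ``design'') so that it is simultaneously null and, using the compressing data supplied by the caps, shrinkable in the sense of Bing's criterion. This is the technical heart of Freedman's theorem; the argument in essentially the form required here is carried out in~\cite[Chapters~3--5]{Freedman-Quinn:1990-1}, with a more leisurely modern account in~\cite{teamfreedman}.
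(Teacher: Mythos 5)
Your proposal is correct and takes essentially the same route as the paper: the paper's proof is precisely an appeal to Freedman--Quinn's tower height raising with control (their Sections~3.5--3.8) to produce a convergent infinite tower inside the neighbourhood, followed by the identification of that tower, rel its attaching region, with a 2-handle via ``the design'' and Bing shrinking (their Chapter~4), which is the same reimbed-then-shrink pipeline you describe and defer to the same source for.
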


\begin{proof}
  Follow the arguments of the second and third sentences of
  \cite[Proof of Theorem~5.1A]{Freedman-Quinn:1990-1}.  The given data
  is sufficient to perform \emph{tower height raising with control}.
  Begin with the tower height raising proposition in
  \cite[Section~3.5]{Freedman-Quinn:1990-1}, and introduce control, to
  produce an infinite convergent tower as
  in~\cite[Sections~3.6--8]{Freedman-Quinn:1990-1}.  See in particular
  \cite[Proposition~3.8]{Freedman-Quinn:1990-1}.  A convergent
  infinite tower is then shown to be homeomorphic, relative to its
  attaching region $\partial_-$, to an open 2-handle via ``the
  design'' and Bing shrinking~\cite[Chapter~4]{Freedman-Quinn:1990-1}.
  See \cite[Theorem~4.1]{Freedman-Quinn:1990-1}.
\end{proof}

A key step in the proof of Theorem~\ref{theorem:grope-to-disc-main} is
to perform \emph{grope height raising}.

\begin{lemma}[Grope Height Raising Lemma]
  \label{lemma:grope-height-raising}
  Let~$G_{1.5}^c$ be a height $1.5$ capped grope which is properly
  immersed in a 4-manifold. For any $n \in \mathbb{N}$ there exists,
  inside a neighbourhood of~$G_{1.5}^c$, a properly immersed
  height~$n$ capped grope~$G_n^c$ with the same framed boundary.
\end{lemma}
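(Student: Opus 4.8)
The plan is to prove this by induction, raising the height of the capped grope one surface stage at a time, with each step relying on a single geometric move: surgering a cap along a pair of parallel copies of itself to replace it by a genus-one surface stage carrying two new caps. First I would fix the model picture: $G_{1.5}^c$ has a bottom (height one) surface stage whose symplectic basis curves $\{\alpha_i,\beta_i\}$ each bound a capped grope, with one curve from each dual pair (say the $\alpha_i$) bounding a \emph{height one} capped grope — i.e. a single cap — and the other ($\beta_i$) bounding a \emph{height zero} capped grope, i.e. also just a disc but formally one stage lower. More precisely, I want the bookkeeping so that the target $G_n^c$ has $n$ honest surface stages, and the inductive hypothesis produces, in a neighbourhood of the given data, a properly immersed height-$(k.5)$ capped grope with the same framed boundary; the base case $k=1$ is the hypothesis.

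The key inductive step is the cap-surgery (or ``capped surface'' / ``push-off'') construction, exactly as in the proof of the Grope Height Raising Lemma in \cite[Chapter~8]{teamfreedman} and ultimately \cite[Section~2.7]{Freedman-Quinn:1990-1}. Given a top-stage cap $c$ of the current capped grope, with framed boundary $\gamma$ on a top surface stage, take a parallel (normal) push-off $c'$ of $c$ using the canonical framing; the union $c \cup c'$, after removing a small bigon near $\gamma$, becomes an embedded genus-one surface $S$ with boundary $\gamma$ and with a symplectic basis consisting of a meridian-longitude pair on the annular region between $c$ and $c'$. One basis curve bounds a small disc (a meridian of $c$), which we keep as one new cap; the dual curve is parallel to $\gamma$, hence bounds — here one re-uses the \emph{original} cap $c$ itself, suitably tubed, as the second new cap. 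This replaces one surface-cap pair $(\text{surface stage curve }\gamma,\ c)$ by a new surface stage $S$ together with two caps, raising the height by one at that location. Doing this simultaneously for all top-stage caps, and being careful that the push-offs are taken disjointly and inside the prescribed neighbourhood $\nu G^c_{k.5}$, yields a properly immersed height-$((k{+}1).5)$ capped grope with the same framed boundary. (For the framed-boundary claim one checks the new bottom stage and its base circles are literally unchanged, since all modifications happen in the top two stages; the framing statement follows because push-offs use the canonical framings and the re-used cap carries its unique framing.)

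The main obstacle — and the reason this deserves a written proof rather than a citation — is controlling \emph{disjointness and properness} through the iteration: the push-off caps $c'$ must be made disjoint from the entire rest of the capped grope and from each other, and any intersections that are forced (self-plumbings, or plumbings among caps coming from the original proper immersion) must be tracked so that the output is again a \emph{properly immersed} capped grope, i.e. body embedded and caps disjoint from the body. The delicate point is that a plumbing between two caps $c$ and $d$ of the input forces an intersection between $S_c$ and $S_d$ (or their new caps) in the output; one handles this by arranging the push-offs to be ``contained'' near their parent caps, so that a plumbing $c \pitchfork d$ becomes a plumbing of new top-stage caps (permissible for proper immersions, by the remark after Definition~\ref{definition:proper-immerssed-capped-grope}) rather than an illegal cap–body intersection. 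Once this local model is set up correctly, the induction runs cleanly: after $n-1$ steps one reaches a height-$((n-1).5)$ capped grope, and since a height-$((n-1).5)$ capped grope contains a height-$n$ capped grope (by definition, discard the extra half-stage caps or equivalently note a height-$(m.5)$ grope contains a height-$m$ grope), the lemma follows. I would also remark that this is precisely the step where replacing ``height $3$'' by ``height $2$'' in Lemma~\ref{lemma:capped-grope-to-one-story-tower} is legitimate: height raising starting from $1.5$ surface stages already suffices to reach arbitrary height.
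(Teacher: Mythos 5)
Your proposed inductive step is not a valid construction, and this is a genuine gap rather than a presentational issue. The move you describe is purely \emph{one-sided}: it tries to promote a single cap $c$ to a genus-one surface stage with two new caps using only $c$ and a parallel push-off of $c$, without consuming any structure from the dual side. If such a move existed, it would raise the height of \emph{any} properly immersed capped surface (height-$1$ capped grope) arbitrarily --- note that your argument nowhere uses the hypothesis that the grope has height $1.5$, i.e.\ that the dual curves also carry caps --- and, fed into the Freedman--Quinn machinery, this would produce embedded discs in settings far beyond what is known (it would make the symmetric hypotheses of height raising superfluous). Concretely, the local picture does not close up: a push-off $c'$ of an \emph{immersed} cap $c$ meets $c$ near each double point, so $c\cup c'$ (with a bigon removed) is not an embedded surface; the ``meridian of $c$'' you propose as one new cap intersects the very sheet of $c$ that you are keeping as part of the new body, violating properness; and the re-used copy of $c$ serving as the second cap either coincides with the body or, after pushing off, meets it at every former self-intersection. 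These are exactly the intersections that cannot be traded away locally --- they can only be removed by tubing into a \emph{transverse} surface, and a parallel copy of $c$ is not transverse to $c$ (it is parallel, with essential boundary), so no such surface is available from $c$ alone.

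The paper's proof is structured precisely around supplying that missing transverse object from the \emph{opposite} side: after the Cap Separation Lemma~\ref{lemma:cap-separation} makes the $+$ and $-$ caps disjoint, the transverse capped surface $F_-$ is built from two parallel copies of the $+$ side capped surface, and each intersection of a $-$ side cap is tubed into a parallel copy of $F_-$; this is what legitimately converts the $-$ caps into capped surfaces and raises height on that side. One then alternates sides, re-separating caps at each stage and using transverse capped \emph{gropes}, so the $(+,-)$ heights grow like Fibonacci numbers, and finally one contracts down to a symmetric height-$n$ capped grope. If you want to salvage your induction, the inductive step must be reformulated in this symmetric form: the height gained on one side is paid for by (copies of) the capped structure on the other side, which is exactly why the lemma requires height $1.5$ as input and why a cap-only local move cannot work.
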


The statement that a height~$1.5$ capped grope can have its height
raised arbitrarily is not contained in \cite{Freedman-Quinn:1990-1}
(the best statement given is in an exercise in their Section~2.7, and
involves height $2.5$).  The outline of the proof of the Grope Height
Raising Lemma was explained to the second author by F.~Quinn and
P.~Teichner, in the discussion sessions associated to the MPIM lecture
series of M.~Freedman~\cite{teamfreedman}.  For the convenience of the
reader we include the details below, after stating and proving
Lemma~\ref{lemma:cap-separation}.

Consider a framed capped grope $G_{n}^c$. Divide the surfaces
(including the caps) above the first stage into two sides, labelled as
the $+$ and $-$ sides, as follows. For a dual pair of curves in a
symplectic basis for the first stage surface, the surface attached to
one curve is labelled $+$ and the surface attached to the dual curve
is labelled~$-$.  A surface of stage 3 or higher has the same label as
that of the surface to which it is attached.  We therefore have $+$
and $-$ side height $n-1$ capped gropes.  When beginning with a height
$n.5$ capped grope, choose labels so that we have $+$ side height $n$
capped gropes and $-$ side height $n-1$ capped gropes.  In particular,
for height $1.5$, we just have caps on the $-$ side.

The following lemma is an important preliminary construction in the
height raising process.  When starting with a grope of height at
least~3, this step can be avoided by using the argument
of~\cite{Freedman-Quinn:1990-1}.  When starting with a grope of height
$1.5$ or $2$ however, the Cap Separation
Lemma~\ref{lemma:cap-separation} below seems to be necessary.

\begin{lemma}[Cap Separation Lemma]\label{lemma:cap-separation}
  For any $n \in \frac{1}{2}\mathbb{N}$ with $n \geq 1.5$, within a
  neighbourhood of a height $n$ capped grope, there is a height $n$
  capped grope with the same framed boundary and with the $+$ side
  caps disjoint from the $-$ side caps.
\end{lemma}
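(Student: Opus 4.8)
The plan is to make the $+$ side caps and $-$ side caps disjoint by a finger-move / pushing argument, at the cost of introducing new intersections that can be absorbed into extra body stages. First I would look at the intersections between a $+$ side cap $c_+$ and a $-$ side cap $c_-$. Since the caps are attached to a symplectic basis of the top stage surfaces, a $+$ cap and a $-$ cap attached to dual curves on the \emph{same} top stage surface meet algebraically once (the geometric dual pairing), while caps attached to curves on different surfaces — including different sides of unrelated dual pairs — meet algebraically zero times. The strategy is to push the interior of each $-$ side cap off the $+$ side caps; the obstruction to doing this naively is exactly the algebraically-essential intersection with the \emph{dual} $+$ cap, which we are not allowed to remove.

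The key step is therefore not to remove these intersections but to \emph{trade the $-$ side cap for a higher grope}. Concretely, for a $-$ side cap $c_-$ attached to a top-stage curve $\gamma$, let $c_+$ be the dual $+$ cap. Replace $c_-$ by two parallel push-offs of itself, tubed together along the intersections with $c_+$ — equivalently, surger $c_-$ along a parallel copy of $c_+$. This produces, in a neighbourhood of $c_-\cup c_+$, a surface (genus one, with the boundary $\gamma$) carrying a new symplectic pair of curves: one bounds a small disc dual to the point $c_-\cap c_+$ lies on, and the other is a parallel copy of $\gamma$ pushed along $c_+$. We then re-cap this new surface, placing the new caps so that they lie in a small neighbourhood of $c_+$ and its double-point loops, hence on the $+$ side, while the \emph{body} of the new genus-one piece absorbs the old $c_-$. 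The result is that the offending $c_-$ has been converted into one extra body stage plus caps that live on the $+$ side; after this the $-$ side caps genuinely can be pushed off all the $+$ side caps, because all remaining $+$/$-$ cap intersections are algebraically trivial and can be removed by finger moves (Whitney moves in the complement, or just pushing across the body). One must check that the new surface and caps can be arranged to have the same framed boundary as before — this is routine since the surgery is along a push-off with the induced framing — and that no new $-$/$-$ cap intersections obstruct anything, which holds because everything introduced is confined to disjoint neighbourhoods of the dual pairs $c_-\cup c_+$.

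The main obstacle I expect is bookkeeping the height: after the trade, the $-$ side cap has become a body stage, so a priori one has raised the height on the $-$ side and must re-examine what ``height $n$ capped grope'' means. The fix is to do the trade simultaneously for every dual pair and observe that the new object still has exactly $n$ surface stages with the same side-labelling convention — the extra genus absorbed into what was a cap becomes part of the top surface stage, not a genuinely new stage — so the statement is preserved. A secondary point requiring care is that self-plumbings and $+$/$+$ plumbings among the caps (allowed in a proper immersion) must be carried along: these live on one side and do not interfere with the separation, but one should note that a $+$ side cap plumbed with a $-$ side cap must first have that plumbing undone (pushed off) using the allowed finger moves, which is possible precisely because plumbings contribute algebraically-zero intersections. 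Once the framing and height bookkeeping is settled, the lemma follows, and the construction stays inside a neighbourhood of the original capped grope throughout, as required.
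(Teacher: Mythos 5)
There is a genuine gap here, and it sits at the heart of your plan. First, the intersection pattern you posit is not correct: in a properly immersed capped grope the cap attached to a curve and the cap attached to its dual curve do not meet ``algebraically once,'' and unrelated caps do not meet ``algebraically zero times.'' In the model all caps are disjointly embedded; every intersection between a $+$ side cap and a $-$ side cap arises from a plumbing, and plumbings are allowed arbitrarily between any two caps (Definition~\ref{definition:proper-immerssed-capped-grope}), so there is no algebraic control at all on the $+$/$-$ cap intersections. Hence there is no distinguished ``essential'' intersection to trade away, and no reason the remaining intersections cancel in pairs. Second, and more seriously, your final step --- removing the ``algebraically trivial'' $+$/$-$ intersections by finger moves or ``Whitney moves in the complement, or just pushing across the body'' --- is precisely the move that is unavailable in dimension four. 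A Whitney move needs an embedded, framed Whitney disc whose interior misses the body and all the caps; producing such discs is the disc embedding problem itself, and finger moves only create intersections, they never remove them. So the proposed argument fails at its crucial step even if the intersection bookkeeping were right.

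The paper's proof instead removes the $+$/$-$ intersections by tubing into transverse spheres, which trades them for same-side intersections (which the statement permits): one forms the transverse capped grope $F_-$ for the $-$ side from two parallel copies of the $+$ side capped grope joined by an annulus, contracts its top stage and pushes the $-$ side caps off the contraction (new $-$/$-$ intersections are acceptable), contracts further to get transverse spheres $F'_-$ meeting the $-$ side in single points, and then, for each intersection of a $+$ cap with a $-$ cap, pushes down to a stage~2 $-$ surface if necessary and tubes the $+$ cap into a parallel copy of the appropriate transverse sphere. The new intersections produced are only among $+$ caps (and the earlier ones among $-$ caps), so the separated capped grope has the same body and framed boundary. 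Your intermediate ``trade a $-$ cap for extra genus'' move also has an unjustified point: after surgering $c_-$ the new dual curves of the added genus need honest caps, i.e.\ discs disjoint from the body, and a small neighbourhood of $c_+$ and its double point loops does not obviously contain such discs; without a transverse-sphere or parallel-copy construction of the kind above, this re-capping is not available.
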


We believe some of the details of the proof we give to be new.

In the arguments below we will use the \emph{symmetric contraction}
described in~\cite{Freedman-Quinn:1990-1}.  For readers not familiar
with this notion, we state the key properties. Given a properly
immersed capped surface $\Sigma^c$ (i.e.\ a height~1 capped grope) in
a 4-manifold, \cite[Section~2.3]{Freedman-Quinn:1990-1} associates an
immersed genus zero surface $\Sigma'$ in a given neighbourhood of
$\Sigma^c$, with $\partial\Sigma' = \partial\Sigma^c$, such that any
immersed surface $S$ disjoint from the body of $\Sigma^c$ is regularly
homotopic to a surface $S'$ disjoint from $\Sigma'$, via a homotopy
supported in a given neighbourhood of the union of the caps
of~$\Sigma^c$.  We say that $\Sigma'$ is obtained from $\Sigma^c$ by
\emph{symmetric contraction}, and $S'$ is obtained by \emph{pushing
  $S$ off the contraction}.  We remark that $S'$ may have additional
self-intersections; more generally, for two such surfaces $S_1$ and
$S_2$, the pushed-off surfaces $S_1'$ and $S_2'$ may intersect in more
points than their antecedents $S_1$ and~$S_2$.

In the remainder of this paper, unless specified otherwise,
`contraction' and the verb `to contract' refer by default to the
symmetric contraction.

\begin{proof}[Proof of Cap Separation Lemma~\ref{lemma:cap-separation}]
  We need to remove intersections between the $+$ and $-$ side caps.
  Let $F_-$ denote the transverse capped grope for the $-$ side, which
  is constructed from two parallel copies of the $+$ side capped grope
  and an annulus which joins them together in a neighbourhood of the
  attaching circle for the $+$ side; see
  \cite[Section~2.6]{Freedman-Quinn:1990-1}.  Note that the number of
  components of $F_-$ is equal to the genus of the bottom stage
  surface.

  %Take a parallel copy of $F_-$,
  %contract the top stage of the
  %parallel copy and push $F_-$ and the $-$ side caps off the
  %contraction.
  Contract the top stage of $F_-$ and push the $-$ side caps off the
  contraction.  Note that when we push off the contraction we may
  obtain new intersections of the $-$ side caps, but that is
  acceptable.  Contract further if necessary, to obtain a collection
  of transverse spheres $F'_-$, each of which is dual to a $-$ side
  stage 2 surface or cap.  We have that $F'_-$ is disjoint from the
  $-$ side apart from the transverse point, but $F'_-$ may intersect
  the $+$ caps, since we did not push those off the contraction.  For
  a given $-$ cap we want one of these transverse spheres in order to
  eliminate intersections with the $+$ caps; for each such
  intersection, push the intersection down to a $-$ side surface of
  stage 2 if necessary, and tube the $+$ side cap into a parallel copy
  of the $F'_-$ transverse sphere.  We may obtain new intersections
  between $+$ side caps, since $F'_-$ may intersect $+$ side caps, and
  from the fact that $F'_-$ will probably not be embedded.  But this
  is acceptable too.
\end{proof}

We are now ready to give the proof of
Lemma~\ref{lemma:grope-height-raising}.  The proof uses arguments
known to the experts; a variant appeared in
\cite[Section~2.7]{Freedman-Quinn:1990-1}.  Ours is based on that
typed up by the second author and W.~Politarczyk in
~\cite[Section~8.3]{teamfreedman}, which itself derived from the
Freedman lectures and discussion sessions.

\begin{proof}[Proof of Grope Height Raising
  Lemma~\ref{lemma:grope-height-raising}]

  By the Cap Separation Lemma~\ref{lemma:cap-separation}, we may
  suppose that we have a properly immersed grope of height $1.5$ where
  the $+$ caps are disjoint from the $-$ caps.

  Let $F_-$ denote the transverse capped surface for the $-$ side,
  which is constructed from two parallel copies of the $+$ side capped
  surface.  Tube each intersection of the $-$ side caps into a
  parallel copy of $F_-$.  This turns the $-$ side caps into capped
  surfaces.  We have now raised the height by one on the $-$ side.  We
  started with $(+,-)$ heights being $(1,0)$ and now we have $(1,1)$.

  Next we raise height on the $+$ side.  To achieve this we repeat the
  above process, with $+$ and $-$ reversed.  That is, first we apply
  Lemma~\ref{lemma:cap-separation} to once again separate the caps on
  the $+$ and $-$ sides.  Then we let $F_+$ denote the transverse
  capped surface for the new $+$ side.  Tube intersections of $+$ side
  caps into parallel copies of $F_+$.  This creates a grope with
  $(+,-)$ height $(2,1)$.

  Now repeat as many times as desired, alternating $+$ and $-$ sides,
  but with $F_{\pm}$ as transverse capped gropes instead of capped
  surfaces.  From $(2,1)$ we go to $(2,3)$, then $(5,3)$, then $(5,8)$
  and so on.  The $(+,-)$ heights grow according to the Fibonacci
  numbers.  We remark that we could instead apply the argument in
  \cite[Section~2.7]{Freedman-Quinn:1990-1} once we get both sides of
  height at least $2$; it raises height a little slower but creates
  fewer intersections along the way.

  To finish, once both sides have height at least $n$, contract until
  both sides have height exactly~$n$, so that we have a height~$n$
  symmetric grope.
\end{proof}

\begin{remark}\label{remark:grope-height-raising-from-n}
  The proof of Lemma~\ref{lemma:grope-height-raising} can be applied
  to raise the height of a grope of any height $n \ge 2$.  In this
  case one obtains a Fibonacci sequence beginning with $(n-1,n-1)$.
\end{remark}

With the preliminaries at last complete, there now follows the proof
of Theorem~\ref{theorem:grope-to-disc-main}.

\begin{proof}[Proof of Theorem~\ref{theorem:grope-to-disc-main}]
  Our goal is to extend our capped grope to a properly immersed one
  storey capped tower, so that we can apply
  Theorem~\ref{theorem:one-story-tower-four-stages-implies-disc}.

  We may assume that $n=1.5$, since the first step is to perform
  \emph{grope height raising}.  (One can either first contract the
  grope until it is of height~$1.5$, or use
  Remark~\ref{remark:grope-height-raising-from-n}.)
  % Let $m$ be the $\pi_1$\nd-height of the image of
  % $\pi_1(\nu G^c_n \sm G_{1\ed 1},*)
  % \to \pi_1(M\sm G_{1\ed 1},*)$.
  In each step below, first we state what to do, and then discuss how.

  \begin{step-named}[Step 1]
    Extend the body of $G^c_{1.5}$ to a properly immersed framed
    capped grope $G^c_{6}$ of height $6$ in~$\nu G^c_{1.5}$.
  \end{step-named}

  This is done by applying Grope Height Raising
  Lemma~\ref{lemma:grope-height-raising}, to raise the height of
  $G^c_{1.5}$ by~$4.5$.

  \begin{step-named}[Step 2]
    Extend $G_{1\ed 5}\subset G^c_{6}$ to a height 5 properly immersed
    framed capped grope $G^c_{5}$ in a neighbourhood of $G^c_{6}$,
    such that all of the cap intersections of $G^c_{5}$ are
    self-intersections.
  \end{step-named}

  The argument for Step 2 is as follows.  Contract the top stage
  surfaces of $G_6^c$ one at a time, inductively, pushing caps of all
  the other remaining top stage surfaces off the contraction before
  contracting the next top stage surface.  The result is a height 5
  capped grope $G^c_5$ with the desired property.

  \begin{step-named}[Step 3]
    Modify $G^c_{5}$ in such a way that each stage 2 surface $\Sigma$
    of $G^c_{5}$ has a transverse sphere which meets $G^c_{5}$ at
    exactly one point, and that point belongs to~$\Sigma$; also, caps
    attached to distinct top level surfaces are disjoint.
  \end{step-named}

  This is achieved using what is by now a standard argument
  (\cite[Section~2.6]{Freedman-Quinn:1990-1}), as follows.  We use the
  following notation: for a surface $\Sigma$ in the body of a capped
  grope $G^c$, let $G^c_{\Sigma}$ be the capped grope consisting of
  the surfaces and caps on the top of $\Sigma$, including $\Sigma$
  itself.  Let $\Sigma$ be a stage~2 surface of $G^c_{5}$, and let
  $\Sigma'$ be the stage~2 surface dual to~$\Sigma$ i.e.\ the
  attaching curves of $\Sigma$ and $\Sigma'$ are dual curves on the
  first stage surface $G_{1\ed 1}$.  Construct a transverse capped
  grope of height $4$ for $\Sigma$ by taking two parallel copies of
  $G^c_{\Sigma'}$, and attaching an annulus cobounded by the boundary
  of the parallel copies, which lies in a regular neighbourhood of
  $\partial\Sigma'$, and which meets $\Sigma$ at exactly one point.
  Note that the caps of the transverse capped grope may meet the caps
  of $G^c_{\Sigma'}$, but no other caps.  Contract the top stage of
  the transverse capped grope, push intersections with caps of
  $G^c_{\Sigma'}$ off the contraction, and then totally contract the
  remaining stages of the transverse capped grope.  This gives us a
  new capped grope, which we still call $G^c_{5}$, together with
  transverse spheres for the stage 2 surfaces.  Each transverse sphere
  meets the new $G^c_{5}$ at a single point.  The transverse spheres
  are not mutually disjoint, but that is permitted.  In particular,
  the two transverse spheres associated to $\Sigma$ and $\Sigma'$,
  which are attached to a dual pair of curves on a stage 1 surface,
  intersect each other in two points. These two points lie in a
  neighbourhood of the intersection point between the pair of curves
  on the stage~1 surface.  Note that while the push off operation
  introduces intersections of caps which are not self-intersections,
  caps of the new $G^c_{5}$ which are attached to distinct top stages
  are still disjoint, since the top stage contraction-push-off can
  only introduce intersections between caps attached to the same top
  stage surface.  This uses Step~2.

  \begin{step-named}[Step 4]
    Extend $G_{1\ed 3} \subset G^c_{5}$ to a properly immersed
    framed height 3 capped grope $G^c_3$ whose caps lie in a regular
    neighbourhood of $G^c_{5}$ and whose double point loops are
    null homotopic in $M\sm G_{1\ed 1}$.
  \end{step-named}

  Consider a stage 4 surface $\Sigma$ of~$G^c_{5}$.  Choose a regular
  neighbourhood $W_\Sigma$ of $G^c_\Sigma$ in the exterior
  of~$G_{1\ed 3}$.  Note that $G^c_\Sigma$ has height~$2$.  The capped
  gropes $G^c_\Sigma$ are mutually disjoint by the penultimate
  sentence of Step 3.  Thus we may assume that the neighbourhoods
  $W_\Sigma$ are disjoint.  Since
  $W_\Sigma\subset \nu G^c_n\sm G_{1\ed 1}$, the image of
  $\pi_1(W_\Sigma)\to \pi_1(M\sm G_{1\ed 1})$ lies in the image of the
  fundamental group of the component of $\nu G^c_n \sm G_{1\ed 1}$
  containing $\Sigma$, which is a good group, by the hypothesis.  By
  Definition~\ref{definition:good-group}, it follows that there is an
  immersed disc in $W_\Sigma$, whose boundary is equal to the base
  circle of $G^c_\Sigma$, and whose double point loops are trivial in
  $\pi_1(M\sm G_{1\ed 1})$.  Replacing each $G^c_\Sigma$ with such an
  immersed disc, we obtain the desired properly immersed framed capped
  grope of height~3.

  \begin{step-named}[Step 5]
    Construct a one storey capped tower and then a flat embedded disc.
  \end{step-named}

  Now consider the union of $G^c_F$ taken over all stage 2 surfaces
  $F$ of~$G^c_3$.  This is a capped grope of height 2, which is
  properly immersed in~$M\sm G_{1\ed 1}$, is $\pi_1$-null, and has
  transverse spheres, by Steps~3 and~4.  It follows from
  Lemma~\ref{lemma:capped-grope-to-one-story-tower} that the body of
  $\bigcup_F G^c_F$ extends to a properly immersed one storey capped
  tower with 3 surface stages in~$M\sm G_{1\ed 1}$.  Attach this one
  storey capped tower to $G_{1\ed 1}$, to obtain a one storey capped
  tower with 4 surface stages.
  Theorem~\ref{theorem:one-story-tower-four-stages-implies-disc} then
  yields a flat embedded disc as claimed.
\end{proof}

\section{Casson towers of height four, three and two}
\label{section:casson-towers}

In this section we apply Theorem~\ref{theorem:grope-to-disc-main} on
gropes to obtain results on Casson towers.  We consider Casson towers
of height four, three and two, in that order.  As the height of the
Casson tower decreases, we need stronger assumptions on fundamental
groups in order to deduce the existence of embedded discs.

The following construction of Ray allows us to pass from Casson towers
to gropes.  Recall that the symplectic basis curves of top stage
surfaces of a framed grope is framed by the induced framing, and
double point loops of a plumbed handle are framed as in
Definition~\ref{definition:casson-tower}.

\begin{proposition}[{\cite[Proposition~3.1]{Ray-2013-1}}]
  \label{proposition:grope-in-casson-tower}
  A Casson tower $T$ of height $n$ contains an embedded framed grope
  $G_n$ of height $n$ with base circle equal to the attaching curve
  $C(T)$ as framed circles.  Moreover the union of the
  standard symplectic basis curves on the top stage surfaces of $G_n$
  is, as a framed 1-submanifold, isotopic to the union of $2^n$
  parallel copies of the double point loops of the top stage of $T$,
  via disjointly embedded framed annuli whose interior is disjoint
  from~$G_n$.
\end{proposition}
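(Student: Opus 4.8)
The plan is to induct on the height $n$, and at each stage to resolve the bottom plumbed handle of the tower into the bottom surface of the grope and recurse on the height $n-1$ towers attached above it. The strategy coincides with that of~\cite[Proposition~3.1]{Ray-2013-1}; here is how I would organise it.

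\emph{Base case ($n=1$).} A height~$1$ Casson tower $T_1$ is a plumbed handle whose core is an immersed disc with transverse double points $p_1,\dots,p_k$ and disjoint double point loops $d_1,\dots,d_k$. I would resolve each $p_j$ by deleting from the domain disc a small bidisc neighbourhood of the two preimages of $p_j$ and gluing in an embedded annulus joining the two resulting small circles: those two circles form a Hopf link in the boundary $3$-sphere of a small ball about $p_j$, hence cobound an embedded (pushed-in) Hopf band whose interior can be taken disjoint from the rest of the core and from $C=C(T_1)$. Doing this for every $p_j$ produces an \emph{embedded} genus~$k$ surface $\Sigma\subset T_1$ with $\partial\Sigma=C$; because the surgeries are supported near the $p_j$, $\Sigma$ agrees with the core near $C$, so $C$ carries its tower framing, which is the induced framing of Definition~\ref{definition:framing-of-gropes}. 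The $j$-th resolution adds a genus-$1$ summand, and the point is to pick its symplectic basis correctly: not the ``obvious'' pair consisting of a Hopf band core $a_j$ (which is nullhomotopic in $T_1$, since it bounds the small disc we removed) together with a curve $b_j$ isotopic to $d_j$, but rather the pair $(a_jb_j,\,b_j)$, both of whose members are homotopic in $T_1$ to the double point loop $d_j$. A direct inspection of the local model then exhibits, for each of these $2k$ curves, an embedded framed annulus with interior disjoint from $\Sigma$ to a parallel copy of the appropriate $d_j$ carrying its preferred framing; the annuli for distinct curves can be kept disjoint because the $d_j$ are disjoint and the constructions are local. This proves the $n=1$ case: the $2k$ symplectic basis curves of $\Sigma$ are $2=2^{1}$ parallel copies of the $k$ double point loops.

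\emph{Inductive step.} Assume the statement for height $n-1$. Write $T=T_n$ as $T_1^{(0)}\cup\bigcup_i T_{n-1}^{(i)}$, where $T_1^{(0)}$ is the bottom plumbed handle, with double point loops $d_i$, and $T_{n-1}^{(i)}$ is the height $n-1$ tower glued along a solid-torus neighbourhood of $d_i$. Apply the base case to $T_1^{(0)}$ to get the bottom surface $\Sigma_0$ of genus $k_0$ with its $2k_0$ symplectic curves; using the base-case annuli, the two curves attached to the genus-$1$ summand at $d_i$ may be isotoped to lie in $\partial_-(T_{n-1}^{(i)})$ as two parallel copies of the attaching circle $C(T_{n-1}^{(i)})$ with its framing. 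The inductive hypothesis supplies an embedded height $n-1$ grope in $T_{n-1}^{(i)}$ based at $C(T_{n-1}^{(i)})$, and inside a framed thickening of it one finds a disjoint parallel copy of the whole grope based at a framed pushoff of $C(T_{n-1}^{(i)})$. Glue one copy along each of the two curves at $d_i$, for every $i$; the result is an embedded height $n$ grope $G_n$ with base $C(T)=C(T_n)$, embeddedness following because the $T_{n-1}^{(i)}$ are mutually disjoint (as the $d_i$ are), the two copies inside a given $T_{n-1}^{(i)}$ sit at different normal levels, and each copy meets $\Sigma_0$ only along its base curve. Counting: each $d_i$ carries two symplectic curves, to each of which we glued a grope whose own top curves are, by hypothesis, $2^{n-1}$ parallel copies of the top double point loops of $T_{n-1}^{(i)}$; hence the top surfaces of $G_n$ carry exactly $2\cdot 2^{n-1}=2^{n}$ parallel copies of the top double point loops of $T$. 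The framed annuli realising the ``moreover'' isotopy are assembled from those supplied inductively and in the base case, pushed into the $\partial_+$ region of the relevant top plumbed handles, where they meet neither the lower stages of $G_n$ nor one another.

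\emph{Main obstacle.} The crux is the base case: getting the local model at a self-plumbing exactly right --- identifying the resolved genus-$1$ summand, selecting its symplectic basis so that \emph{both} dual curves (not merely one) become parallel copies of the double point loop, and, most delicately, checking that the surface-induced framings on these curves agree with the preferred framing of the double point loop fixed in Definition~\ref{definition:casson-tower}; this last point is cleanest to verify in the explicit Kirby diagram of Figure~\ref{fig:plumbed-handle-diagram}, where both framings appear as the zero framing. The remaining work --- keeping all the parallel copies of lower-stage gropes and all the connecting annuli simultaneously embedded and mutually disjoint through the induction --- is routine but genuinely bookkeeping-heavy.
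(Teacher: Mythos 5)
Your proposal is correct in outline and follows essentially the same route as the proof the paper relies on: the paper does not reprove this proposition but cites Ray, whose construction (whose first-stage surface is recalled in Figure~\ref{figure:ray-surface}) likewise produces in each plumbed handle an embedded genus-$k$ surface whose symplectic basis curves are framed parallels of the double point loops, and then inducts by inserting two parallel copies of the height-$(n-1)$ gropes, giving the $2^n$ count exactly as you describe. The one weak point is your justification of the boundary framing: ``$\Sigma$ agrees with the core near $C$'' proves nothing, since the induced framing on $C(T)$ is not determined by the germ of the surface along its boundary, and indeed the plumbed core itself agrees with $\Sigma$ near $C$ while its restricted framing differs from the preferred framing by $k_--k_+$ twists (Definition~\ref{definition:casson-tower}). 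The correct and easy fix is the argument used in Lemma~\ref{lemma:resolving-double-points}: in the Kirby diagram of Figure~\ref{fig:plumbed-handle-diagram} the resolved surface is a (pushed-in) Seifert surface for $C(T)$, hence induces the zero framing, which is the preferred framing---the same zero-framing check in the diagram that you correctly identify as settling the framings of $a_jb_j$ and $b_j$, where the $\mp1$ twist of the Hopf-band core is cancelled by the clasp's linking in the band sum.
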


We note that the first stage surface $G_{1\ed 1}$ of the grope $G
\subset T$ is denoted by $\Sigma(T)$ in the introduction.

To employ the grope technology, we need capped gropes.  The following
innocent observation is useful in producing capped gropes in a Casson
tower.  We will also present a more involved construction of a capped
grope in
Lemma~\ref{proposition:capped-grope-from-embedded-casson-tower}.

\begin{lemma}[Capped grope in a Casson tower]
  \label{lemma:capped-grope-in-casson-tower}
  A Casson tower $T$ of height $n+1$ contains a properly immersed
  capped grope $G_n^c$ of height $n$, with base circle equal to $C(T)$
  as framed circles.  The body of $G^c_n$ is the grope $G_n$ for the
  subtower $T_{1\ed n}$ from
  Proposition~\ref{proposition:grope-in-casson-tower}.
\end{lemma}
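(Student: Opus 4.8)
The plan is to take the grope already sitting inside the Casson tower and attach, as caps, the core discs of the top stage plumbed handles of the tower. First I would invoke Proposition~\ref{proposition:grope-in-casson-tower} applied to the subtower $T_{1\ed n}$ (which is a height $n$ Casson tower): this gives an embedded framed grope $G_n$ with base circle $C(T)$, whose top stage symplectic basis curves are, as a framed $1$-submanifold, isotopic to $2^n$ parallel copies of the double point loops of the top ($n$th) stage of $T_{1\ed n}$, via embedded framed annuli with interiors disjoint from $G_n$. Recall that in a height $n+1$ Casson tower $T$, by Definition~\ref{definition:casson-tower} each of these $n$th stage double point loops has a neighbourhood (in the relevant $\partial_+$) identified, along preferred framings, with the attaching region of a height $1$ Casson tower $T_1$, i.e.\ a plumbed handle, whose core is an immersed disc bounded by that double point loop. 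Parallel copies of a double point loop, being isotopic to it, bound parallel copies of this immersed core disc; and the framing matching in Proposition~\ref{proposition:grope-in-casson-tower} is exactly the framing condition needed for these parallel core discs to be the correct framed caps for the top stage surface curves.

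The key steps, in order, are then: (i) isotope the top stage symplectic basis curves of $G_n$ onto the $2^n$ parallel copies of the $n$th stage double point loops, dragging $G_n$ along the annuli of Proposition~\ref{proposition:grope-in-casson-tower} (whose interiors are disjoint from $G_n$, so this is a genuine ambient isotopy fixing the lower stages and $C(T)$); (ii) for each such curve, attach the corresponding parallel copy of the immersed core disc of the height $1$ tower glued on at that double point loop, taking parallel copies to be disjoint from one another and from parallel copies attached at other double point loops; (iii) check framings: the symplectic basis curves get the induced framing from $G_n$, which by Proposition~\ref{proposition:grope-in-casson-tower} agrees with the preferred double point loop framing, which in turn is the restriction of the unique framing of the core disc of the plumbed handle, so the caps are framed discs bounded by their boundary curves; (iv) verify properness in the sense of Definition~\ref{definition:proper-immerssed-capped-grope}: the body $G_n$ is embedded; the caps are immersed discs whose only singularities are the self-intersections of the plumbed handle cores and intersections between cores of distinct top stage handles — these are exactly the self-plumbings and cap-cap plumbings allowed for a proper immersion; and the caps are disjoint from the body $G_n$ since the $(n+1)$st stage plumbed handles are glued along $\partial_+$ of the $n$th stage and lie outside the subtower $T_{1\ed n}$ that contains $G_n$, except at the attaching curves which are the symplectic basis curves themselves.

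The main obstacle is step (iv), specifically confirming that the core discs of the top stage plumbed handles meet $G_n$ only along their common boundary circles and do not re-enter the interior of the grope $G_n \subset T_{1\ed n}$. This follows because the $(n+1)$st stage material of $T$ is attached to $T_{1\ed n}$ only along tubular neighbourhoods of the $n$th stage double point loops inside $\partial_+$ of the $n$th stage plumbed handles, so the core discs live in the collar region added on top and touch $T_{1\ed n}$ only in that attaching region; since we have arranged the top stage basis curves of $G_n$ to lie precisely on the double point loops, the caps meet $G_n$ exactly in those curves. A minor secondary point is to ensure the $2^n$ parallel copies of each core disc, and copies attached at different double point loops, can be made mutually disjoint except for the shared plumbing singularities inherited from the tower — this is routine since parallel push-offs of an immersed disc, taken in a thin enough normal direction, only reproduce the original double points (now as intersections between copies), which is permitted for a proper immersion of a capped grope.
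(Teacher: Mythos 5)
Your overall strategy is the same as the paper's: apply Proposition~\ref{proposition:grope-in-casson-tower} to $T_{1\ed n}$, then cap the top stage symplectic basis curves with parallel copies of the core discs of the stage $n+1$ plumbed handles, using the framed annuli to transfer between basis curves and double point loops; your discussion of disjointness from the body and of the intersections created by parallel push-offs is fine and matches what a proper immersion (Definition~\ref{definition:proper-immerssed-capped-grope}) allows. However, your framing verification in step (iii) contains a genuine gap. You assert that the preferred framing of a double point loop ``is the restriction of the unique framing of the core disc of the plumbed handle'' attached there. By Definition~\ref{definition:casson-tower}, the stage $n+1$ handle is glued so that the preferred framing of the double point loop matches the preferred framing of its attaching circle, and the latter is the framing of $D^2\times 0$ \emph{before} plumbing; it differs from the framing induced by the actual immersed core disc by $k_--k_+$ twists, where $k_\pm$ are the numbers of positive and negative self-plumbings of that top stage core disc. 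So unless each stage $n+1$ core disc has signed self-plumbing count zero, your ``caps'' are twisted rather than framed, and the output is not a properly immersed \emph{framed} capped grope as the lemma requires (twisted caps need the much more delicate treatment of Lemma~\ref{lemma:big-small-caps}).

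The missing step, which is exactly the point the paper's proof isolates as ``the only issue,'' is to first modify each top stage core disc by locally introducing additional self-plumbings of the appropriate sign so that its signed count of self-plumbings becomes zero. These extra interior kinks are harmless, since self-plumbings of caps are permitted in a proper immersion, and after this correction the framing induced by the core disc on the double point loop agrees with the preferred framing; combined with the framing statement in Proposition~\ref{proposition:grope-in-casson-tower}, the parallel copies then are honestly framed caps. Note that the alternative fix of boundary twisting would not do here, since it would force the cap to intersect the top stage surface of the body, destroying properness. With this correction inserted before your steps (ii)--(iv), your argument goes through and coincides with the paper's proof.
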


\begin{proof}
  Let $G_n$ be the framed embedded grope in~$T_{1\ed n}$ obtained by
  applying Proposition~\ref{proposition:grope-in-casson-tower}
  to~$T_{1\ed n}$.  We will attach caps to $G_n$, constructed from
  parallel copies of the core discs of the top stage plumbed handles
  of $T$ (together with parallels of the annuli given in
  Proposition~\ref{proposition:grope-in-casson-tower}).  The only
  issue is that the caps should be framed.  For this purpose, we
  arrange that the top stage core discs of $T$ induce the preferred
  framing on the double point loops of the $n$th stage plumbed handles
  to which they are attached.  That is, each core disc of a stage
  $n+1$ plumbed handle should have the signed count of its
  self-plumbings equal to zero.  We achieve this by locally
  introducing the requisite number of self-plumbings of appropriate
  sign.  Now, from this and from the framing property in
  Proposition~\ref{proposition:grope-in-casson-tower}, it follows that
  we obtain framed caps.  Thus we have a properly immersed framed
  capped grope $G_n^c$ extending~$G_n$, inside~$T$.
\end{proof}

We also need the following lemma on a fundamental group arising from
the construction in
Proposition~\ref{proposition:grope-in-casson-tower}.

\begin{lemma}\label{lemma:pi-1-T_1-1-setminus-G_1-1}
  Let $T$ be a Casson tower and let $\Sigma=\Sigma(T)$.  Then
  \[
  \pi_1(T_{1\ed 1} \sm \Sigma) \cong \la \mu,a_1,\dots,a_k \, | \,
  [\mu,a_i],\, i=1,\dots, k \ra
  \]
  where $\mu$ is a meridian to $\Sigma$ and the $a_i$ are the double
  point loops of $T_{1\ed 1}$.
\end{lemma}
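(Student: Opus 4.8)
The plan is to realise the lemma as a presentation of $\pi_1$ of the exterior of a tubular neighbourhood of $\Sigma:=\Sigma(T)=G_{1\ed 1}$ in $T_{1\ed 1}$, computed from Ray's construction of $\Sigma$ and organised by van Kampen's theorem, with a homology count to control the result.

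First I would reduce to that exterior. As the bottom surface stage of the framed grope of Proposition~\ref{proposition:grope-in-casson-tower}, $\Sigma$ has a framed tubular neighbourhood $\nu\Sigma\cong\Sigma\times D^2$ in $T_{1\ed 1}$ meeting $\partial T_{1\ed 1}$ exactly in $\partial\Sigma\times D^2$; hence $T_{1\ed 1}\sm\Sigma$ deformation retracts onto $X:=T_{1\ed 1}\sm\Int(\nu\Sigma)$, and it suffices to present $\pi_1(X)$. I would then note two facts. First, $T_{1\ed 1}$, being a regular neighbourhood of its immersed core disc $\Delta$, deformation retracts onto $\Delta$, which is homotopy equivalent to a wedge of $k$ circles carried by the double point loops; so $\pi_1(T_{1\ed 1})\cong\la a_1,\dots,a_k\ra$ is free. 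Second, applying van Kampen's theorem to $T_{1\ed 1}=X\cup_{\Sigma\times S^1}\nu\Sigma$: the map $\pi_1(\Sigma\times S^1)\to\pi_1(\nu\Sigma)\cong\pi_1(\Sigma)$ is the projection, which is onto with central cyclic kernel generated by the meridian $\mu$ of $\Sigma$, so the amalgamated product collapses and we get $\pi_1(X)/N\cong\pi_1(T_{1\ed 1})\cong\la a_1,\dots,a_k\ra$, where $N\le\pi_1(X)$ is the normal closure of $\mu$.

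The core of the proof is to promote this to a presentation of $\pi_1(X)$ itself, and for that I would cut $X$ along tubular neighbourhoods $Z_1,\dots,Z_k\cong S^1\times D^3$ of the double point loops of $T_{1\ed 1}$, which are disjoint by Definition~\ref{definition:casson-tower}. By Ray's construction, $\Sigma$ agrees with $\Delta$ outside $\bigcup_i Z_i$, where $\Delta$ restricts to an embedded planar surface, while inside $Z_i$ the surface $\Sigma$ is a standard genus-one piece obtained by resolving the double point along a pushoff of the $i$th double point loop. Deleting $\nu\Sigma$ from the outside piece leaves a product of a planar surface with $S^1$, with fundamental group free on pushoffs of the double point loops times the central $\mu$; deleting $\nu\Sigma\cap Z_i$ from $Z_i$ is an explicit local computation in $S^1\times D^3$; and reassembling by van Kampen, the relations contributed by each $Z_i$ identify the two pushoffs of $a_i$ while keeping $\mu$ central, producing a surjection $\la\mu,a_1,\dots,a_k\mid[\mu,a_i]\ra\to\pi_1(X)$ and, I expect, an isomorphism. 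To confirm injectivity cleanly: its kernel is trivial modulo $N$ by the previous paragraph, so it lies in the central subgroup $\la\mu\ra\cong\Z$ of the source and equals $\la\mu^n\ra$ for some $n\ge0$; were $n\ge1$ we would get $H_1(X)\cong\Z^{k}\oplus\Z/n\Z$ of rank $k$, but Mayer--Vietoris for $T_{1\ed 1}=X\cup\nu\Sigma$, using $H_2(T_{1\ed 1};\Q)=0$, gives $\dim_\Q H_1(X;\Q)=(2k+1)+k-2k=k+1$, so $n=0$. As a further sanity check one sees directly from Proposition~\ref{proposition:grope-in-casson-tower} that each $a_i$ is represented up to conjugacy by a loop on $\Sigma$, so that the boundary torus of an annular neighbourhood of that loop pushed into $\nu\Sigma$ maps to $X$ with coordinate circles $\mu$ and a pushoff of $a_i$, re-proving $[\mu,a_i]=1$.

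The step I expect to be the main obstacle is the local analysis inside the $Z_i$ and the ensuing van Kampen bookkeeping: one must extract from Ray's construction the precise position of $\Sigma$ inside $Z_i\cong S^1\times D^3$, compute $\pi_1$ of its exterior there, and then track carefully which relations survive the amalgamation over the $2k$ outside generators, so that exactly the commutators $[\mu,a_i]$ remain, with no generator lost and with basepoints and conjugations chosen consistently. By contrast the reduction to $X$, the van Kampen collapse, and the homological rank count are routine. As an alternative to the $Z_i$-decomposition one could instead read $\pi_1(X)$ off a Kirby diagram of the pair $(T_{1\ed 1},\Sigma)$ drawn from the standard diagram of a plumbed handle in Figure~\ref{fig:plumbed-handle-diagram}, though the bookkeeping there appears no lighter.
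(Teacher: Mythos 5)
There is a genuine gap, and it sits exactly where you flag ``the main obstacle'': the surjectivity step, i.e.\ that $\mu$ and the $a_i$ generate $\pi_1(T_{1\ed 1}\sm\Sigma)$ with only the commutator relations surviving, is never carried out --- you reduce it to ``an explicit local computation in $S^1\times D^3$'' plus van Kampen bookkeeping and leave both undone. Worse, the geometric premise for that decomposition is not correct for the surface the lemma is about. You describe $\Sigma$ as agreeing with the immersed core disc $\Delta$ away from the double point loops, with the genus arising by ``resolving the double point''; that is a description of the resolution surface $F$ of Lemma~\ref{lemma:resolving-double-points} (whose curves $\alpha_i$ bound $\pm1$-twisted caps), not of Ray's surface $\Sigma(T)$. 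By Proposition~\ref{proposition:grope-in-casson-tower} (case $n=1$), \emph{both} curves of each dual pair on $\Sigma(T)$ are parallel to the double point loop $a_i$; $\Sigma(T)$ is the pushed-in surface of Figure~\ref{figure:ray-surface}, not a perturbation of the core disc, and it does not sit inside neighbourhoods of the double point loops in the way your $Z_i$-decomposition assumes. So the local model you would need to compute is not yet even correctly identified, and without it the map $\la\mu,a_1,\dots,a_k\mid[\mu,a_i]\ra\to\pi_1(T_{1\ed 1}\sm\Sigma)$ is not known to be onto; your homology count only pins down injectivity \emph{given} surjectivity.

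The parts you do carry out are fine and compatible with the paper: the reduction to the exterior, the observation that killing the normal closure of $\mu$ gives the free group on the $a_i$, the torus giving $[\mu,a_i]=1$, and the Mayer--Vietoris rank count. But the paper's proof supplies precisely the missing content by a different route: it pushes $\Sigma$ into a collar $\partial T_{1\ed1}\times I$, uses a Morse function on $\Sigma$ (two 1-handles, one 2-handle) so that the collar complement is $(\partial T_{1\ed1}\sm\partial\Sigma)\times I$ with two 2-handles and a 3-handle attached, computes $\pi_1(\partial T_{1\ed1}\sm\partial\Sigma)$ as a Dehn filling of the Whitehead link exterior via a Wirtinger presentation, adds the filling and 2-handle relators, and then --- the key trick your approach lacks --- turns the 0-/1-handle decomposition of the plumbed handle upside down so that passing from the collar complement to all of $T_{1\ed1}\sm\Sigma$ only adds 3- and 4-handles, which do not change $\pi_1$; the general $k$ is handled by connect-summing copies of the picture and Seifert--Van Kampen. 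If you want to salvage your outline, you would need to replace the $Z_i$ local model with an honest description of $\Sigma(T)$ (e.g.\ from Figure~\ref{figure:ray-surface}) and actually perform the resulting presentation computation, at which point you are essentially reconstructing the paper's argument.
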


\begin{proof}
  For convenience, we assume that the first stage of $T$ has one
  (negative) double point.  We will indicate along the way how to
  adapt the proof for the general case.

  We recall Ray's construction from \cite[Proof of
  Proposition~3.1]{Ray-2013-1}.  The first stage surface
  $\Sigma$ is shown in Figure~\ref{figure:ray-surface}, where the
  plumbed handle $T_{1\ed1}$ is described as a Kirby diagram.  More
  precisely, $\Sigma$ is obtained by pushing the interior of the
  surface in Figure~\ref{figure:ray-surface} slightly into the
  interior of~$T_{1\ed1}$.  The curve $a_1$ is the double point loop,
  which is the attaching circle for the next stage of the Casson tower
  $T$.

  \begin{figure}[ht]
    \labellist\small
    \pinlabel{$C(T)$} at 0 53
    \pinlabel{$\Sigma$} at 168 140
    \pinlabel{$a_1$} at 170 20
    \endlabellist
    \includegraphics[scale=.7]{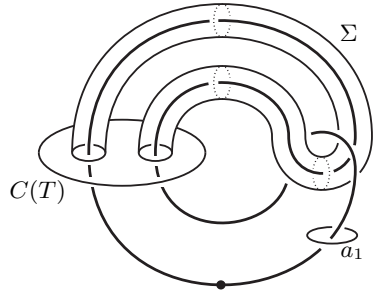}
    \caption{Kirby diagram of a plumbed handle together with Ray's genus
      one surface.}
    \label{figure:ray-surface}
  \end{figure}

  We remark that the commutator relation $[\mu,a_1]$ can be seen as
  follows: the normal circle bundle of $\Sigma$ restricted on one of
  the dotted circles in Figure~\ref{figure:ray-surface} is a torus
  disjoint from $\Sigma$, whose symplectic basis curves are (isotopic
  to) a meridian of $\Sigma$ and the curve~$a_1$.  In what follows we
  will prove that this commutator relation suffices to present
  $\pi_1(T_{1\ed 1} \sm \Sigma)$.

  Consider a collar neighbourhood $\partial T_{1\ed1} \times I$ of
  $\partial T_{1\ed1}$.  We may assume that the height function
  $\partial T_{1\ed1} \times I \to I$ restricts to a Morse function
  for $\Sigma$ with 3 critical points, corresponding to two 1-handles
  and one 2-handle of~$\Sigma$; the 1-handles are shown as dashed
  lines in Figure~\ref{figure:surface-handle-decomp}.

  \begin{figure}[ht]
    \includegraphics[scale=.9]{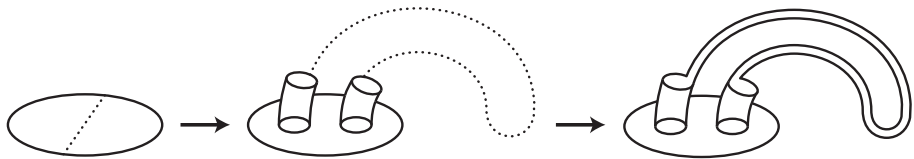}
    \caption{A handle decomposition of the surface~$\Sigma$.}
    \label{figure:surface-handle-decomp}
  \end{figure}

  The handle decomposition of $\Sigma$ gives rise to a handle
  decomposition of the exterior of~$\Sigma$ in the collar
  neighbourhood $\partial T_{1\ed1}\times I$ of $\partial T_{1\ed1}$:
  \[
  (\partial T_{1\ed1}\times I) \sm \Sigma = (\partial T_{1\ed1}
  \sm \partial \Sigma) \times I \cup (\text{two 2-handles}) \cup
  (\text{one 3-handle}).
  \]
  An $i$-handle in a handle decomposition of a surface embedded in a
  4-manifold corresponds to an $(i+1)$-handle in a handle
  decomposition of the exterior of the surface (for a proof, see for
  example~\cite[Proposition~6.2.1]{Gompf-stipsicz-book}).  The handle
  attachments are shown in Figure~\ref{figure:exterior-handle-decomp},
  where the attaching circles and spheres are drawn with dashed lines.  The dashed lines showing the 1-handle attachments in the left and middle diagrams of Figure~\ref{figure:surface-handle-decomp} correspond to the 2-handle attachments in the construction of the exterior of the surface shown in the top left and top centre diagrams of Figure~\ref{figure:exterior-handle-decomp}.

  \begin{figure}[ht]
    \labellist\small
    \pinlabel{$x$} at -5 163
    \pinlabel{$y$} at 32 180
    \pinlabel{$p$} at 65 217
    \pinlabel{$s$} at 60 179
    \pinlabel{$r$} at 60 148
    \pinlabel{$q$} at 65 110
    \endlabellist
    \includegraphics[scale=.85]{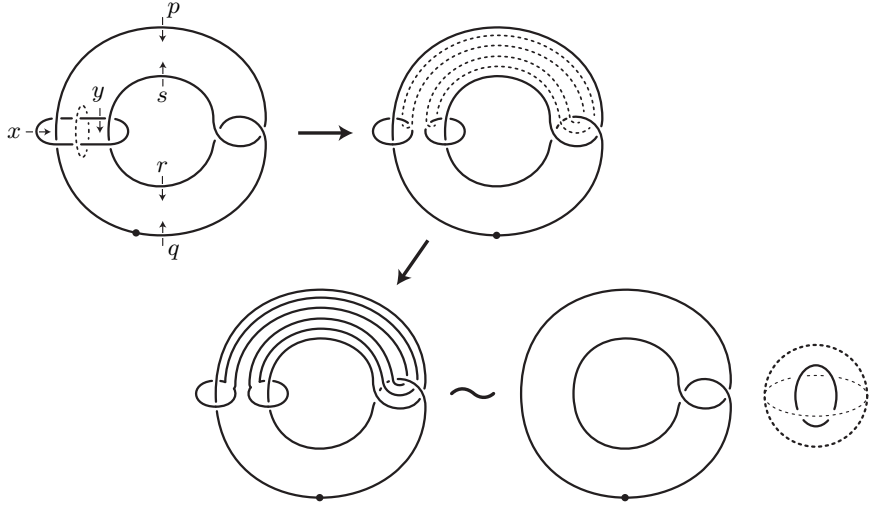}
    \caption{A handle decomposition of $(\partial T_{1\ed1}\times I)
      \sm \Sigma$.  An arrow indicates the attaching of a handle,
      and the $\sim$ symbol indicates an isotopy.  It is much easier
      to draw the attaching 2-sphere for the 3-handle after the
      isotopy.}
    \label{figure:exterior-handle-decomp}
  \end{figure}

  As shown in Figure~\ref{figure:ray-surface}, $\partial
  T_{1\ed1}\sm \partial \Sigma$ is the exterior of the Whitehead
  link, with a boundary component (corresponding to the dotted circle)
  filled in with a solid torus along the zero framing.  Therefore,
  starting with the Wirtinger presentation
  \[
  \langle x, y, p, q, r, s \mid y=p^{-1}xp,\, q = xpx^{-1},\, r=sqs^{-1},\,
  s=q^{-1}pq,\, r = xsx^{-1} \rangle,
  \]
  of the Whitehead link, where the generators are those shown in the
  first diagram in Figure~\ref{figure:exterior-handle-decomp}, and
  then adding three more relators
  \[
  x^{-1}s^{-1}xq^{-1}p^2,\, xy^{-1},\, xqx^{-1}q^{-1}
  \]
  which are from the Dehn filling and the 2-handle attachments
  respectively, we obtain a presentation of $\pi_1((\partial
  T_{1\ed1}\times I) \sm \Sigma)$.  Simplifying the presentation,
  we obtain:
  \[
  \pi_1((\partial T_{1\ed1}\times I) \sm \Sigma) \cong \langle x, q \mid
  [x,q] \rangle.
  \]
  Observe that $x=\mu$ and $q=a_1$.  In the case that the clasp is a
  positive clasp, the relators $r=sqs^{-1}$ and $s=p^{-1}qp$ are
  replaced by $s=rpr^{-1}$ and $r=p^{-1}qp$.  It is not too hard to
  check that the computation above has the same outcome with these
  alterations.  The relator corresponding to Dehn filling also changes
  to $x^{-1}pxrp^{-2}$, but this relator is superfluous to simplifying
  the presentation in both cases.

  Turn the handle decomposition of $T_{1\ed1}$, into a 0-handle and a
  1-handle, given by the Kirby diagram in
  Figure~\ref{figure:ray-surface} upside down. We see that
  $T_{1\ed1}\sm \Sigma$ is obtained by attaching a 3-handle and a
  4-handle to $(\partial T_{1\ed1}\times I) \sm \Sigma$.  In general,
  turning a handle decomposition
  \[
  \bigcup h_0 \cup \dots\cup \bigcup h_3
  \]
  of a connected $4$-manifold with nonempty boundary $(M,\partial M)$
  upside down gives us a decomposition rel.\ boundary
  \[
  \partial M \times I \cup \bigcup h_3^* \cup \dots\cup \bigcup h_0^*,
  \]
  where $h_i^*$ is the $(4-i)$-handle dual to the $i$-handle~$h_i$.
  Since neither a 3-handle nor a 4-handle affect the fundamental
  group,
  \[
  \pi_1(T_{1\ed1} \sm \Sigma) \cong \pi_1((\partial
  T_{1\ed1}\times I) \sm \Sigma) \cong \langle \mu, a_1 \mid
  [\mu,a_1] \rangle.
  \]
  For $k>1$ double points, take $k$ copies of
  Figure~\ref{figure:ray-surface}, with the crossings in the clasp
  switched where appropriate, and connect sum the $C(T)$ curves
  together.  This performs a boundary connect sum operation on the
  surfaces.  Similarly, take multiple copies of the first three
  diagrams of Figure~\ref{figure:exterior-handle-decomp}, and connect
  sum the $C(T)$ curves together i.e.\ the copies of the curve with
  meridians $x$ and $y$ in the first diagram of
  Figure~\ref{figure:exterior-handle-decomp}.  This composite curve
  represents $\partial \Sigma$.  There is still only one 2-handle of
  $\Sigma$, therefore only one 3-handle of $(\partial T_{1\ed 1}\times
  I)\sm \Sigma$.  So in the ramified case the analogue of the final
  diagram of Figure~\ref{figure:exterior-handle-decomp} will still
  have just a single dashed 2-sphere.  The Seifert-Van Kampen theorem
  applies to show that the effect of the connect sum operations is to
  identify the meridians labelled $x$ in all the copies of the
  diagrams from Figure~\ref{figure:exterior-handle-decomp}; all of
  these become the meridian $\mu$.  Indeed this is the only effect. By
  the computation above, $\mu$ commutes with all the double point
  loops.  We therefore have the presentation
  \[
  \pi_1(T_{1\ed 1} \sm \Sigma) \cong \la \mu,a_1,\dots,a_k \, |
  \, [\mu,a_i],\, i=1,\dots, k \ra. \qedhere
  \]
\end{proof}

\subsection{Casson towers of height four and three}
\label{section:casson-towers-height-four-three}

In this subsection we will prove Theorems~\ref{theorem:main}
and~\ref{theorem:main-height-3} from the introduction.

\begin{definition}\label{definition:distorted-tower}
  Take a Casson tower of height~$n$ and introduce any number of
  plumbings between any top stage handle and any other handle in stage
  two or higher.  A $4$-manifold with boundary, together with a framed
  embedded circle $C$ in its boundary, that is obtained in this way,
  is called a \emph{distorted Casson tower of height $n$}.
\end{definition}

Note that a Casson tower of height $n$ is a distorted Casson tower of
height~$n$.  As another example, a distorted tower of height $n$ may
arise if we have a height~$n-1$ tower $T$ embedded in a 4-manifold
$(M,\partial M)$ with $\partial_- \subset \partial M$, and the double
point loops of the top stage are null-homotopic in the complement in
$M$ of the first stage $T_{1\ed 1}$ of~$T$.  Then a neighbourhood of
the union of the height~$n-1$ tower and the null-homotopies of the
double point loops gives rise to a distorted height~$n$ Casson tower.
Some care is needed to frame the null-homotopies.  Null-homotopies of
different double point loops may intersect each other, and stages two
or higher of the height~$n-1$ Casson tower.

Theorem~\ref{theorem:main} says that a distorted Casson tower $T$ of height 4
contains a disc bounded by~$C(T)$.
We note that we do not make any
assumptions about embedding a distorted Casson tower of height 4; the
distorted Casson tower itself is considered as the ambient manifold.

\begin{proof}[Proof of Theorem~\ref{theorem:main}]
  Let $T$ be a distorted Casson tower of height~4.  First, we apply
  Lemma~\ref{lemma:capped-grope-in-casson-tower} to $T_{1\ed 3}$ to
  obtain a properly immersed capped grope $G^c_2$ in $T_{1\ed 3}$,
  which is bounded by the framed circle~$C(T)$.

  Recall that a plumbed handle is diffeomorphic to a 4-ball with
  1-handles attached, and that the fundamental group is generated by
  the double point loops.  By induction, the
  fundamental group of a Casson tower is generated by the top stage
  double point loops.  Applying this to our case, we see that the
  inclusion induced map $\pi_1(T_{2\ed 3}) \to \pi_1(T)$ is trivial,
  since the 4th stage discs give null-homotopies for the double point
  loops of the 3rd stage plumbed handles.  By
  Lemma~\ref{lemma:pi-1-T_1-1-setminus-G_1-1} and a straightforward
  Seifert-Van Kampen theorem computation for $T_{1\ed 3} \sm G_{1\ed
    1} = (T_{1\ed 1} \sm G_{1\ed 1})\cup T_{2\ed 3}$, it follows that
  the image of $\pi_1(T_{1\ed 3} \sm G_{1\ed 1})$ in $\pi_1(T\sm
  G_{1\ed 1})$ under the inclusion induced map is isomorphic to $\Z$,
  generated by a meridian of~$G_{1\ed 1}$.  From this it also follows
  that the image of $\pi_1(\nu G^c_2 \sm G_{1\ed 1})$ in $\pi_1(T\sm
  G_{1\ed 1})$ is the same~$\Z$.  An infinite cyclic group has
  subexponential growth and is therefore good by
  Theorem~\ref{theorem:krushkal-quinn}, so the hypothesis of
  Theorem~\ref{theorem:grope-to-disc-main} is satisfied (in our case,
  $G_2^c$ is connected and therefore so is $\nu G^c_2 \sm G_{1\ed 1}$).
  Applying Theorem~\ref{theorem:grope-to-disc-main} we can find a
  flat embedded disc inside $T$ as claimed.
\end{proof}

In order for a Casson tower of height 3 to suffice for the existence
of an embedded disc, we will need to embed the tower into a
4-manifold, with a fairly strong assumption on fundamental groups.
Recall that for a Casson tower $T$, there is a surface $\Sigma(T)$
contained in the first stage $T_{1\ed 1}$, by
Proposition~\ref{proposition:grope-in-casson-tower} (see also
Figure~\ref{figure:ray-surface}).  Theorem~\ref{theorem:main-height-3}
says the following: \emph{Let $W$ be a $4$-manifold with boundary and
  suppose that $T=\bigsqcup T_i$ is a collection of disjoint Casson
  towers $T_i$ of height~$3$ in $W$ such that
  $\partial_-(T)\subset \partial W$ and the image of
  $\pi_1(T_i \sm \Sigma(T_i)) \to \pi_1(W \sm\Sigma(T))$ is a good
  group for each~$i$.  Then the framed link $C(T) \subset \partial W$
  is slice in~$W$.}

We note that even if $W$ is simply connected, it is quite possible
that the image of this fundamental group in $\pi_1(W\sm G_{1\ed 1})$
will not satisfy the good property hypothesis. For example if $T$ has
more than one component, $\pi_1(W\sm G_{1\ed 1})$ might contain a
non-abelian free group generated by meridians to the connected
components of $G_{1\ed 1}$.

\begin{proof}[Proof of Theorem~\ref{theorem:main-height-3}]

  The proof begins the same as the proof of
  Theorem~\ref{theorem:main}\@.  Let $G_2^c$ be the height 2 capped
  grope in $T$, constructed in the same way as in that proof.  Note that now $G_2^c$
  may not be connected.  A component of $\nu G_2^c$, say $V$, lies in
  some component $T_i$ of~$T$.  The inclusion induced
  homomorphisms on fundamental groups factor as
  \[
  \pi_1(V \sm G_{1\ed 1}, *) \to \pi_1(T_i \sm G_{1\ed 1}, *) \to
  \pi_1(W \sm G_{1\ed 1}, *)
  \]
  for any choice of basepoint $*$ in~$V$.  The hypothesis of the
  theorem, that the image of the second map is good, implies that the
  image of this composite homomorphism is also good.  This holds
  because the good property is closed under taking subgroups.  Thus
  we can apply Theorem~\ref{theorem:grope-to-disc-main} to obtain the
  flat embedded discs that we seek.
\end{proof}

We remark that, as seen from the above proofs,
Theorem~\ref{theorem:main} is indeed a consequence of
Theorem~\ref{theorem:main-height-3}.  We point out that so far we only
used the Disc Embedding Theorem~\ref{theorem:grope-to-disc-main} for a
height 2 capped group, although it holds for any height $\ge 1.5$.

\subsection{Casson towers of height two}
\label{section:casson-towers-height-two}

This section contains our strongest conclusions in terms of the height
of Casson towers, using the strongest assumption, namely triviality,
on fundamental groups.  Height 2 Casson towers seem to be the most
useful for slicing knots and links in $D^4$, since in practice,
contrivances notwithstanding, it is often difficult to construct tall
Casson towers.  Applications will be given in
Section~\ref{section:slice-knots}.

Our arguments for height 2 Casson towers also involve capped gropes.
For this case, we need the full power, in terms of height, of
Theorem~\ref{theorem:grope-to-disc-main}. That is, we apply the
theorem to a height 1.5 capped grope.  Also, we need a new
construction of a capped grope from an \emph{embedded} Casson tower,
which is given below.  The construction relies upon the properties of
the embedding of the tower, and will not work without an ambient
manifold.

\begin{proposition}[Capped grope from an embedded Casson tower]
  \label{proposition:capped-grope-from-embedded-casson-tower}
    Suppose $n>0$, $(T,\partial_-)$ is a
  Casson tower of height $n+1$ embedded in a 4-manifold $(M,\partial
  M)$, and the double point loops of the top stage of $T$ are
  null-homotopic in $M \sm T_{1\ed n}$.  Then there is a properly
  immersed capped grope $G_{n.5}^c$ of height $n.5$ in $M$, which
  extends the grope $G_n$ for the subtower $T_{1\ed n}$ from
  Proposition~\ref{proposition:grope-in-casson-tower}.  In particular,
  the first stage surface of $G_{n.5}^c$ is $\Sigma(T)$, and the
  attaching circle of $G_{n.5}^c$ and $C(T)$ are equal as framed
  circles.
\end{proposition}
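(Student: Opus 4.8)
The plan is to assemble $G_{n.5}^c$ out of three kinds of pieces: Ray's gropes inside plumbed handles (Proposition~\ref{proposition:grope-in-casson-tower}), the core discs of the top stage plumbed handles as the caps on the ``short'' side (as in Lemma~\ref{lemma:capped-grope-in-casson-tower}), and the hypothesised null homotopies as the caps on the ``long'' side. The genuinely new work is not this assembly but arranging that the long side caps are disjoint from the body; that is the step I expect to be the main obstacle.

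First I would set up the pieces. Applying Proposition~\ref{proposition:grope-in-casson-tower} to $T_{1\ed n}$ produces the embedded framed grope $G_n\subset T_{1\ed n}$, whose top stage symplectic basis curves are parallel copies of the stage $n$ double point loops $d^{(j)}$ of $T$, joined to the top stage surfaces by disjointly embedded framed annuli with interior disjoint from $G_n$. The first stage surface of $G_n$ is $\Sigma(T)$, since the first stage of a Casson tower, and hence Ray's first stage surface, does not depend on the height. At each $d^{(j)}$ sits the stage $(n+1)$ plumbed handle $T_{n+1}^{(j)}$ of $T$, whose core disc is an immersed disc bounded by $d^{(j)}$ with self-intersection loops the top stage double point loops $a_{j,\ell}$; by hypothesis these are null homotopic in $M\sm T_{1\ed n}$. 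I then fix a symplectic basis of $\Sigma(T)$ and divide its dual pairs into a $+$ side and a $-$ side, the goal being a height $n$ capped grope above the $+$ curves and a height $n-1$ capped grope above the $-$ curves.

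On the $-$ side I would simply cap the top stage surfaces of $G_n$ with parallel copies of the core discs of the relevant handles $T_{n+1}^{(j)}$, glued along the annuli of Proposition~\ref{proposition:grope-in-casson-tower}, introducing local self-plumbings of the correct sign to make the induced framing right, exactly as in the proof of Lemma~\ref{lemma:capped-grope-in-casson-tower}; since distinct core discs lie in distinct plumbed handles inside $T\sm T_{1\ed n}$, they are disjoint from the body of $G_n$ and from each other's handles. On the $+$ side I would, for each top stage symplectic basis curve $\gamma$ of $G_n$ lying near some $d^{(j)}$, apply Proposition~\ref{proposition:grope-in-casson-tower} to the height one tower $T_{n+1}^{(j)}$ to obtain an embedded surface $\Sigma'_\gamma\subset T_{n+1}^{(j)}$ bounded by $d^{(j)}$ whose symplectic basis curves are parallel copies of the $a_{j,\ell}$, glue $\Sigma'_\gamma$ onto $\gamma$ along the annulus, and take parallel copies of the null homotopies of the $a_{j,\ell}$ (glued on along annuli, framings corrected by boundary twists) as caps on $\Sigma'_\gamma$. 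Each $\Sigma'_\gamma$ is embedded and, lying inside a top stage plumbed handle, is disjoint from the rest of the body, so the underlying $2$-complex is a bona fide height $n.5$ capped grope; its attaching circle is $C(T)$ and its first stage surface is $\Sigma(T)$ by construction.

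The remaining, and main, obstacle is that the long side caps need not be disjoint from the body. They lie in $M\sm T_{1\ed n}$, so they miss $G_n$ and everything inside $T_{1\ed n}$ automatically; the difficulty is precisely that the newly added stage $(n+1)$ surfaces $\Sigma'_\gamma$ sit in the top stage plumbed handles, which lie in $M\sm T_{1\ed n}$ but are not forbidden to the null homotopies, so a cap typically meets some $\Sigma'_\gamma$ (and the boundary twists above add further such intersections). After making the caps transverse to the body I would remove each such intersection point by pushing it down through the body grope --- across the successive surface stages, using the geometric duals supplied by the grope structure --- to the first stage surface $\Sigma(T)$ and then off its boundary $C(T)$, while keeping careful track of the new intersections this creates, both cap--cap intersections (which are harmless for a proper immersion) and intersections with higher body surfaces (which must themselves be pushed down), and performing whatever further moves within the top stage handles are needed when a direct push-down is obstructed. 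Making this process terminate with every cap disjoint from the body, with only cap--cap intersections surviving, is the delicate combinatorial heart of the argument; once it is carried out the resulting object is the desired properly immersed capped grope $G_{n.5}^c$ in $M$ extending $G_n$.
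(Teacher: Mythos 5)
Your assembly of the underlying $2$-complex is reasonable, and you have correctly identified where the real work lies: making the caps disjoint from the body. But the mechanism you propose for that step does not work, and you leave it uncompleted (``making this process terminate \dots is the delicate combinatorial heart''), so the proof has a genuine gap exactly at the point the proposition is about. Pushing a cap--body intersection down the grope is not a free move: each push-down trades one intersection with a stage-$k$ surface for intersections with the stage-$(k-1)$ surface, so iterating it deposits (exponentially many) intersections of the caps with the bottom stage surface $\Sigma(T)$, and there is no move that pushes these ``off the boundary $C(T)$'' --- that circle lies in the attaching region in $\partial M$, and the bottom stage must remain embedded and disjoint from the caps for the object to be a properly immersed capped grope. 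The standard way to remove cap--body intersections is to tube them into transverse spheres (geometric duals) for the offending body surfaces, and your construction provides no source of such duals: your new top-stage surfaces $\Sigma'_\gamma$ are capped on \emph{both} dual curves by parallel copies of the uncontrolled null homotopies. A further unaddressed problem of the same kind: your $-$ side caps (parallel copies of core discs) and your $+$ side stage-$(n+1)$ surfaces (parallel copies of Ray's surface) live in the \emph{same} top-stage plumbed handles --- indeed the $2^n$ parallel copies of a given top-stage double point loop occur above both the $+$ and the $-$ first-stage curves --- and you give no argument that these core-disc caps miss the new body surfaces; this is another family of cap--body intersections your outline never mentions.

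The paper's proof supplies precisely the missing ingredient, and it is worth seeing why the height drops to $n.5$. Instead of Ray's surface, it uses the surface obtained by resolving the double points of each top-stage core disc (Lemma~\ref{lemma:resolving-double-points}): there one curve $\beta_i$ of each dual pair is parallel to a double point loop (and gets a ``big'' cap from the null homotopy), while the dual curve $\alpha_i$ bounds a $\pm1$ \emph{twisted} cap whose interior is disjoint from everything --- this twisted ``small'' cap is the crucial extra piece of data your construction discards. After attaching $2^n$ push-offs to $G_n$, the top two stages form a height-$2$ capped grope satisfying the hypotheses of Lemma~\ref{lemma:big-small-caps}: on the $-$ side one forgets the big caps, boundary-twists the small caps to frame them, and does asymmetric surgery on the $-$ side top surfaces to turn them into new $-$ caps; parallel copies of these furnish transverse spheres for the $+$ side top surfaces; finally every $+$ side cap--body intersection is removed by tubing into a transverse sphere. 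Sacrificing the $-$ side top stage in the asymmetric surgery is exactly what converts height $n+1$ into height $n.5$, whereas your outline tries to keep a full extra surface stage on the $+$ side of the first stage and pay nothing for the cleanup --- which is why, without duals, the cleanup cannot be completed by push-downs alone.
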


In fact, we only need the $n=1$ case of
Proposition~\ref{proposition:capped-grope-from-embedded-casson-tower}
in this paper, but we state and prove it for general $n>0$ for
possible later use, since this does not require any additional
complication.

For the proof of Proposition
\ref{proposition:capped-grope-from-embedded-casson-tower}, we begin
with a couple of lemmata.  The following lemma cleans up a certain
type of naturally occurring capped grope to a properly immersed
height~2 framed capped grope.  We need to use the notion of a twisted
cap, defined as follows. Suppose $G$ is a framed grope embedded in a
4-manifold $W$.  An immersed disc $D$ in $W$ bounded by a symplectic
basis curve of a top stage surface $\Sigma$ of $G$ is called a
\emph{$\pm1$ twisted cap} if the interior of a collar neighbourhood of
$\partial D \subset D$ is disjoint from $G$, and a push-off of
$\partial D$ along $\Sigma$ induces a section of the normal bundle of
$D$ with relative Euler number $\pm1$. That is, the push-off gives
rise to the framing on $\partial D$ obtained by twisting the
restriction of the unique framing on~$D$ once (either positively or
negatively).

\begin{lemma}\label{lemma:big-small-caps}
  Suppose we are given a height~2 capped grope~$G^c$ which is immersed
  (not properly) in a 4-manifold~$M$, satisfying the following.  The
  surface stages are disjointly embedded and framed.  Each dual pair
  of curves on a second stage surface has two caps, one $\pm
  1$~twisted cap which is disjoint from the body of the grope, and one
  framed cap which potentially intersects other caps \emph{and} second
  stage surfaces.  Then there is a properly immersed height~$1.5$
  framed capped grope in a neighbourhood of $G^c$ with the same first
  stage surface.
\end{lemma}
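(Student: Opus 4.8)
The plan is to convert the given (not properly) immersed height~$2$ capped grope $G^c$ into a properly immersed height~$1.5$ capped grope with the \emph{same} bottom stage surface, in three movements: (i)~promote each $\pm1$-twisted cap to a sphere which is geometrically transverse to the second stage surface it is attached to; (ii)~use those transverse spheres to tube the framed caps off the body; and (iii)~convert the twisted caps into framed, self-plumbed caps, keep one second stage surface from each dual pair of the first stage surface, and symmetrically contract the other second stage surface of each pair down to an immersed disc.

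Write $\Sigma := G_{1\ed 1}$ for the first stage surface, with symplectic basis $\{\alpha_j,\beta_j\}$, and let $\Sigma_{\alpha_j}$ and $\Sigma_{\beta_j}$ be the second stage surfaces attached to $\alpha_j$ and $\beta_j$; by hypothesis the second stage surfaces are disjointly embedded and framed, and on every dual pair of curves of a second stage surface there is a clean $\pm1$-twisted cap and a framed cap which may meet other caps and second stage surfaces.  The key observation for the first movement is that a clean $\pm1$-twisted cap $T$ attached to a symplectic basis curve $\gamma$ of a second stage surface $F$ yields an immersed sphere $S_F$ meeting the body of $G^c$ transversally in a single point, lying on $F$: take two parallel copies of $T$, together with the annulus in $F$ cobounding their boundary circles, and push that annulus off $F$.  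Since $T$ is clean, $S_F$ is disjoint from the body apart from $F$; since the twisting is exactly $\pm1$, $S_F$ cannot be pushed off $F$ completely but retains exactly one transverse intersection with it (were $T$ framed, this construction would yield a small sphere near $T$ that could be pushed off the body entirely).  Performing this for one dual pair on each second stage surface produces a transverse sphere $S_F$ for every second stage surface $F$, while leaving all the original caps in place, since only parallel copies of the twisted caps are used.

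Next, for each intersection point of a framed cap with a second stage surface $F$, tube that cap into an appropriately oriented parallel copy of $S_F$; this removes the intersection point at the cost only of intersections of the transplanted copy of $S_F$ with caps, and self-intersections of that copy, all of which are permitted in a proper immersion.  (When cleaning the framed caps on $F$ itself one uses an $S_F$ built from a dual pair disjoint from those caps, or applies a small preliminary isotopy; this is routine.)  Now every cap is disjoint from the body, though some are still $\pm1$-twisted.  Convert each $\pm1$-twisted cap into a framed immersed cap by the standard boundary-twist trade, which exchanges a unit twist for one self-plumbing of the cap; since self-plumbings of caps are permitted, each second stage surface, with its now framed and body-disjoint caps, is the body of a properly immersed height~$1$ capped grope.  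Keep $\Sigma_{\alpha_j}$ as the height~$1$ capped grope over $\alpha_j$.  Symmetrically contract $\Sigma_{\beta_j}$ together with its caps: as $\beta_j$ is a single circle the result is a genus zero immersed surface, that is, an immersed framed disc $D_{\beta_j}$ bounded by $\beta_j$ and built from pieces of the embedded surface $\Sigma_{\beta_j}$ and parallel copies of clean caps, hence disjoint from $\Sigma$ and from every other second stage surface, with only acceptable self- and cap-intersections.  Taking $\Sigma$ as the bottom stage, the capped surfaces over the $\alpha_j$ as the height~$1$ capped gropes, and the discs $D_{\beta_j}$ as the caps over the $\beta_j$, assembles a properly immersed height~$1.5$ framed capped grope with the same first stage surface, lying in a neighbourhood of $G^c$.

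I expect the hard part to be the first movement: extracting a genuine geometrically transverse sphere from a $\pm1$-twisted cap and verifying that it meets the body in exactly one point while meeting the rest of $G^c$ only in caps, together with the bookkeeping in the second movement that keeps the tubing operations from ever creating cap--body intersections.  The hypothesis that the twisting is exactly $\pm1$, rather than some other integer, is used essentially in constructing the transverse spheres.
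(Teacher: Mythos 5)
There is a genuine gap, and it sits at the heart of your first movement. Doubling a clean $\pm1$-twisted cap $T$ attached to a curve $\gamma$ on a second stage surface $F$ does \emph{not} produce a sphere meeting $F$ transversally in one point. Work in a collar of $\gamma$: the normal directions split into one direction along $F$ and two normal to $F$; pushing the two copies of $T$ off in the two directions normal to $F$ and joining them by an annulus in the normal circle bundle of $F$ over $\gamma$ (going around the side away from $T$) produces a sphere that is \emph{disjoint} from $F$ altogether. The $\pm1$ twist does not obstruct this; its only effect is that the two parallel copies of the twisted disc must intersect each other, i.e.\ the doubled sphere acquires a self-intersection, not an intersection with $F$. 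Indeed, within your recipe the algebraic count $S_F\cdot F$ is zero, so no perturbation can yield exactly one transverse point. The general principle is that a geometric dual for a surface comes from data attached to the \emph{dual curve one stage down}: the double of an object attached along $\gamma\subset F$ is transverse to things attached along the curve dual to $\gamma$ (here, the big cap), never to $F$ itself. This is exactly why the paper's proof first sacrifices one side: it boundary-twists the $-$ side small caps (accepting an intersection with the $-$ surface), asymmetrically surgers the $-$ side surfaces along them so that they become immersed caps attached along the first stage curves, and only then doubles these new $-$ caps to obtain transverse spheres for the $+$ side second stage surfaces --- the single intersection point being forced by the duality of the attaching curves on the first stage. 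Without correct transverse spheres your second movement (tubing the big caps off the body) has nothing to tube into, so the construction does not get off the ground.

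A second, related error: boundary twisting does not ``exchange a unit twist for one self-plumbing of the cap''; it trades the twist for one intersection point between the cap and the surface it is attached to (see the paper's proof, or \cite[Section~1.3]{Freedman-Quinn:1990-1}). In the paper this is harmless on the $-$ side only because that surface is immediately surgered into a cap, converting the new intersection into a self-intersection of the resulting cap; on the $+$ side, which survives as surface stages, the post-twist cap--surface intersections must afterwards be removed by tubing into the transverse spheres. In your ordering (tube first, then boundary twist) the $\alpha$-side small caps end up intersecting the $\alpha$-side surfaces with no mechanism left to clean them, so the final object is not properly immersed even granting movement (i). Your final assembly step (keep the capped $\alpha$-side, contract the $\beta$-side to discs, yielding height $1.5$) is fine in principle, but the two preceding steps need to be replaced by the cap-separation-style argument the paper uses.
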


\begin{proof}
  Divide the second stage surfaces and caps into two sides, the $+$
  and $-$ sides, as described just prior to
  Lemma~\ref{lemma:cap-separation}.
  % We will improve the $+$ side first and then the $-$ side.
  There are two problems to be dealt with, namely the twisted caps
  need to be framed and their dual caps need to be made disjoint from
  the body of the grope.  We have the freedom to reduce height by one
  on the $-$ side.  We will modify the $-$ side first and then improve
  the $+$ side.

  Call the $\pm 1$ twisted caps the \emph{small} caps, and the other
  caps, which can intersect second stage grope surfaces as well as
  each other, the \emph{big} caps.

  Forget the $-$ side big caps and apply the boundary twisting
  operation \cite[Section~1.3]{Freedman-Quinn:1990-1} to the $-$ side
  small caps, so that they are framed with respect to the $-$ side
  surfaces.  This introduces an intersection of each $-$ side small
  cap with the $-$ side surface to which it is attached.  Then use the $-$ side
  small caps to perform \emph{asymmetric surgery} on the $-$ side
  surfaces, changing them to immersed discs, which are the new $-$
  caps.  The intersection of the small caps with the second stage
  surface introduced by the boundary twisting gives rise to
  self-intersections of the new $-$ caps.  These new $-$ caps may also
  intersect other caps, but that is permitted.

  Now create transverse spheres to the $+$ side surface stages using
  two parallel copies of the new $-$ side caps, and the annulus in the
  normal circle bundle to the attaching circle of the $-$ cap, as we
  have done in several other instances in this paper (see the first
  paragraph of the proof of the Cap Separation
  Lemma~\ref{lemma:cap-separation} and Step 3 of the proof of
  Theorem~\ref{theorem:grope-to-disc-main}).  The transverse spheres
  we construct are immersed and may intersect $+$ and $-$ side caps.

  Boundary twist the $+$ side small caps to frame them with respect to
  the $+$ side surfaces.  This creates intersections of the $+$ side
  small caps with the $+$ side second stage surfaces.  Now we just
  have to remove intersections of the $+$ side caps, both big and
  small, with the $+$ side surfaces.  To achieve this, tube anything
  that intersects a $+$ side surface into a transverse sphere.  We
  obtain a properly immersed height~$1.5$ framed capped grope as
  required.
\end{proof}

\begin{lemma}
  \label{lemma:resolving-double-points}
  In a plumbed handle $T$ with $k$ plumbings, there is a genus $k$
  framed surface $F$ with $\partial F=C(T)$ as framed circles, which
  has symplectic basis curves
  $\alpha_1,\beta_1,\ldots,\alpha_k,\beta_k$ satisfying the following.
  Each $\alpha_i$ is dual to $\beta_i$, each $\alpha_i$ bounds a
  $\pm1$ twisted cap whose interior is disjoint from $F$. The union
  $\bigsqcup_i \beta_i$ is parallel to the union of the double point
  loops of $T$ on $\partial_+T$, via $k$ framed annuli disjointly
  embedded in $T$, whose interior is disjoint from $F$ and from the
  $\pm1$ twisted caps.
\end{lemma}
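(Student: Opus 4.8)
The plan is to carry out, one self-plumbing at a time, the standard \emph{resolution of a double point} in the explicit local model of a plumbed handle, and then to track the framings by hand. Write $T = D^2\times D^2$ with core disc $D^2\times 0$ immersed by $k$ self-plumbings performed at disjoint discs in $\Int(D^2\times 0)$, and recall that the double point loops are disjoint. All of the modifications below are then supported in disjoint neighbourhoods of the $k$ double points $p_1,\dots,p_k$, each disjoint from $\partial_- T = \partial D^2\times D^2$, so it suffices to describe the construction near a single $p_i$ and take the union.

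Near $p_i$ the core disc is the image of two sheets, identified by $(z,w)\sim(w,z)$ or $(z,w)\sim(\bar w,\bar z)$ according to the sign of the plumbing, so that on a small $3$-sphere about $p_i$ the two sheets cut out a Hopf link $\ell_1^{(i)}\sqcup\ell_2^{(i)}$, whose clasp has the sign of the plumbing. Let $\delta_1^{(i)},\delta_2^{(i)}$ be the small sheet discs they bound. To resolve $p_i$, I would delete $\mathring\delta_1^{(i)}$ and $\mathring\delta_2^{(i)}$ from the core disc and glue in a Hopf band $A_i$ cobounded by $\ell_1^{(i)}$ and $\ell_2^{(i)}$ inside that $3$-sphere. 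Since the small balls are disjoint, each $A_i$ is embedded, and the core disc meets each small ball in exactly the two sheets, doing this at every $p_i$ converts the immersed core disc into an embedded, connected surface $F$; each resolution raises the genus by one, so $F$ has genus $k$. Because the resolutions avoid a neighbourhood of $\partial(D^2\times 0)$, we get $\partial F = C(T)$, and near $\partial F$ the surface $F$ agrees with the unplumbed disc $D^2\times 0$, whose induced framing on its boundary is by definition the framing of $C(T)$; hence $\partial F = C(T)$ as framed circles.

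Next I would read off the remaining data. For each $i$ put $\alpha_i := \ell_1^{(i)} = \partial\delta_1^{(i)}$, and let $\beta_i$ be the $i$-th double point loop rerouted through $A_i$ instead of through $p_i$; then $\alpha_i$ meets $\beta_i$ exactly once (in $A_i$), the pairs for different $i$ are disjoint, and the $\beta_i$ are disjoint because the double point loops are, so $\{\alpha_i,\beta_i\}$ is a symplectic basis. The deleted disc $\delta_1^{(i)}$ is an embedded disc bounded by $\alpha_i$ whose interior is disjoint from $F$ (its only possible intersection with $F$, the point $p_i$ lying on the other sheet, was removed with $\delta_2^{(i)}$), and the $\delta_1^{(i)}$ can be taken mutually disjoint. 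It is a $\pm1$ twisted cap: a push-off of $\alpha_i$ along $F$ lies on $A_i$ and so realises the Seifert framing of $\ell_1^{(i)}$ coming from the Hopf band, which differs from the $0$-framing of the embedded disc $\delta_1^{(i)}$ by $\operatorname{lk}(\ell_1^{(i)},\ell_2^{(i)}) = \pm1$, with sign the sign of the plumbing. Finally, since $\beta_i$ agrees with the $i$-th double point loop of the core disc away from the ball about $p_i$, the isotopy pushing that double point loop out to $\partial_+ T$ sweeps out an embedded annulus $E_i$ in $T$ from $\beta_i$ to a double point loop on $\partial_+ T$; taking $E_i$ in a normal neighbourhood of $F$ keeps its interior off $F$, shrinking the $\delta_1^{(j)}$ keeps it off the twisted caps, and the $E_i$ are mutually disjoint since the double point loops are.

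The main obstacle is the framing bookkeeping, which appears in three places that must be made simultaneously consistent: that $\partial F = C(T)$ \emph{as framed circles} (immediate once one notes that $F$ agrees near the boundary with the unplumbed core disc); that each $\delta_1^{(i)}$ is $\pm1$ \emph{twisted} with the sign forced by the plumbing (the Hopf-band linking computation above, which is essentially \cite[Lemma~2 of Lecture~I]{Casson-1986-towers}); and that each annulus $E_i$ carries the $F$-induced framing of $\beta_i$ to the \emph{preferred} framing of the double point loop from Definition~\ref{definition:casson-tower}. The last point can be verified directly in the local model, or deduced from Ray's proof of Proposition~\ref{proposition:grope-in-casson-tower}, of which this lemma is essentially the height-one case, enhanced by the observation that one curve of each genus-one handle bounds the sheet disc that was deleted.
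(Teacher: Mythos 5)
Your geometric construction is exactly the paper's first step: the paper also obtains $F$ by resolving each double point of the core disc with a twisted annulus bounded by the Hopf link in the local model $\partial(D^2\times D^2)$. But the paper then says explicitly that the content of the lemma is the framing assertions, and verifies them by isotoping the standard Kirby diagram of Figure~\ref{fig:plumbed-handle-diagram} to Figure~\ref{figure:plumbed-handle-diagram-2}, where $F$ appears as a visible Seifert surface for the unknotted $C(T)$ (a band sum of untwisted annuli with twisted 1-handles), so that $F$ induces the zero framing on $C(T)$ and on the $\beta_i$, the annuli to the double point loops are visibly $0$-framed, and the zero framings are the preferred framings by Definition~\ref{definition:casson-tower}. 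It is precisely this verification that your write-up does not actually carry out.

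The concrete gap is your claim that $\partial F = C(T)$ \emph{as framed circles} is ``immediate once one notes that $F$ agrees near the boundary with the unplumbed core disc.'' For a genus~$k$ surface with one boundary component, every normal framing of $F$ restricts to the same framing of $\partial F$ (the restriction $H^1(F)\to H^1(\partial F)$ vanishes), so the framing $F$ induces on $C(T)$ is a \emph{global} invariant of the surface, not of its germ along the boundary: it is measured by a relative Euler number and it changes when you modify $F$ in its interior. Indeed, each Hopf-annulus resolution shifts this induced framing by $\pm1$ (the same $\pm1$ you correctly compute for the twisted caps), and the preferred framing of $C(T)$ is itself offset from the immersed core disc's framing by $k_--k_+$ twists in Definition~\ref{definition:casson-tower}; the lemma is true only because these contributions cancel, and that cancellation has to be checked, not read off from the collar. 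The same issue recurs in your final sentence: that the annuli $E_i$ carry the $F$-induced framing of $\beta_i$ to the \emph{preferred} framing of the double point loops is exactly the other half of the bookkeeping, and you defer it (``can be verified directly\ldots or deduced from Ray's proof'') rather than prove it. The quickest repair is the paper's: exhibit your resolved surface, the caps, and the annuli in the isotoped Kirby diagram of Figure~\ref{figure:plumbed-handle-diagram-2}, where $F$ is a pushed-in Seifert surface, so all the framings in question are visibly the zero framings of the diagram and hence the preferred ones.
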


\begin{proof}
  The surface $F$ is obtained from the core disc of $T$ by a standard
  construction that resolves singularities by increasing the genus:
  replace the standard local model $(D^2\times D^2, D^2\times 0 \cup
  0\times D^2)$ of an intersection point by a twisted annulus in $S^3
  = \partial(D^2\times D^2) \subset D^2\times D^2$ bounded by the Hopf
  link $S^1\times 0 \cup 0\times S^1$.  To verify the framing
  assertions, we use a Kirby diagram argument as follows.

  The Kirby diagram of a plumbed handle with $k$ plumbings in
  Figure~\ref{fig:plumbed-handle-diagram} is isotopic to the diagram
  in Figure~\ref{figure:plumbed-handle-diagram-2}.  Observe that
  $C(T)$ bounds a surface $F$ which is a band sum of $k$ untwisted
  annuli with $k$ twisted 1-handles attached; see
  Figure~\ref{figure:plumbed-handle-diagram-2}.  Let $\alpha_i$ be the
  core circle of the $i$th twisted 1-handle, and $\beta_i$ be the core
  of the $i$th untwisted annulus; see
  Figure~\ref{figure:plumbed-handle-diagram-2} again.  The curves
  $\alpha_i$ and $\beta_i$ form a symplectic basis.

  \begin{figure}[t]
    \labellist
    \small
    \pinlabel {$C(T)$} at 37 7
    \pinlabel {$a_1$} at 38 110
    \pinlabel {$a_k$} at 167 110
    \pinlabel {$F$} at 132 20
    \pinlabel {$\alpha_1$} at -5 38
    \pinlabel {$\alpha_k$} at 124 38
    \pinlabel {$\beta_1$} at 100 30
    \pinlabel {$\beta_k$} at 230 30
    \endlabellist
    \includegraphics{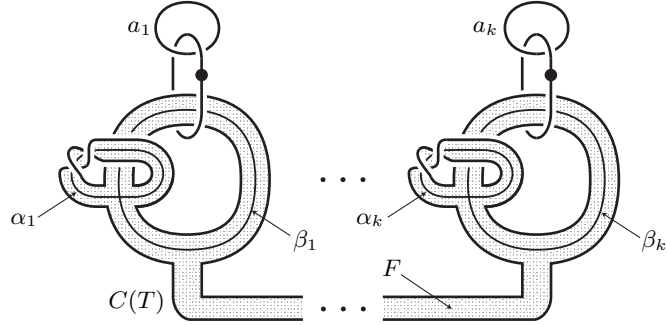}
    \caption{A Kirby diagram of a plumbed handle $T$ with $k$ plumbings,
      and a genus $k$ surface $F$ bounded by $C(T)$.}
    \label{figure:plumbed-handle-diagram-2}
  \end{figure}

  From Figure~\ref{figure:plumbed-handle-diagram-2}, we see that $F$
  induces the zero framing on the $\beta_i$, and $\beta_i$ is parallel
  to the double point loop $a_i$ via a framed annulus inducing the
  0-framing on both, as desired.  Also, $\alpha_i$ bounds a $\pm1$
  twisted cap whose interior lies in the interior of $D^4$ and so is
  disjoint from everything else.  Since $F$ is a Seifert surface for
  $C(T)$, $F$ induces the zero framing on $C(T)$, which is the
  preferred framing by Definition~\ref{definition:casson-tower}.
\end{proof}

\begin{proof}[Proof of
  Proposition~\ref{proposition:capped-grope-from-embedded-casson-tower}]

  First apply Proposition~\ref{proposition:grope-in-casson-tower}
  to find a framed grope $G_n$ in $T_{1\ed n}$ with framed
  boundary~$C(T)$.  Consider the top stage of $T$ in
  $V:=\overline{M-T_{1\ed n}}$.  By
  Lemma~\ref{lemma:resolving-double-points}, in each stage $n+1$
  plumbed handle of $T$, we have a framed surface bounded by its
  attaching circle, with $\pm1$ twisted caps which we call a
  \emph{small cap}, and annuli cobounded by the dual basis curve and
  the double point loop.  By hypothesis, the double point loop is
  null-homotopic in~$V$.  Attach a null-homotopy to each annulus to
  obtain a cap dual to the $\pm1$ twisted cap; we call it a \emph{big
    cap}.  This terminology was already used in the proof of
  Lemma~\ref{lemma:big-small-caps}.  We may assume that each big cap
  is framed, by applying boundary twist if necessary.  This gives a
  capped grope of height 1, which is immersed in $V$ but not properly
  immersed in general; the big caps may intersect other surfaces and
  caps.

  Take $2^n$ push-offs of each of these height 1 capped grope and
  attach them to $G_n$ to obtain a height $n+1$ capped grope
  $G_{n+1}^c$.  The body of $G_{n+1}^c$ and the big caps are
  compatibly framed, while the small caps are twisted.  Now the big
  caps may intersect stage $n+1$ surfaces and other caps of
  $G_{n+1}^c$, but are disjoint from $G_{1\ed n}$, since the big caps
  lie in~$V$.  Note that now there are intersections between the small
  caps, which were introduced when we took push-offs, since the small
  caps are twisted.  However the small caps are disjoint from the body
  of~$G_{n+1}^c$.

  Consider $G_{n\ed (n+1)}^c$, the top two surface stages of
  $G_{n+1}^c$ together with all the small and big caps.  Since
  $G_{n\ed (n+1)}^c$ is a height $2$ capped grope satisfying the
  hypotheses of Lemma~\ref{lemma:big-small-caps}, there is a properly
  immersed height 1.5 capped grope with the same base surfaces, which
  lies in a neighbourhood of~$G_{n\ed(n+1)}^c$.  Replace $G_{n\ed
    (n+1)}^c$ in $G_{n+1}^c$ with this height 1.5 capped grope to
  obtain a properly immersed capped grope~$G_{n.5}^c$ of height~$n.5$
  as desired.
\end{proof}

We are now ready to give the proof of
Theorem~\ref{theorem:main-height-2} from the introduction, which says:
\emph{let $W$ be a $4$-manifold with boundary and suppose $T$ is a
  Casson tower of height~$2$ embedded in $W$ such that the second
  stage $T_{2\ed 2}$ of $T$ lies in a codimension zero simply
  connected submanifold $V \subseteq \overline{W \sm
    T_{1\ed 1}}$.  Then the knot $C(T)\subset \partial W$ is
  slice in~$W$.}

\begin{proof}[Proof of Theorem~\ref{theorem:main-height-2}]
  Apply Proposition~\ref{proposition:capped-grope-from-embedded-casson-tower}
  to the given Casson tower $T$ of height~2, with $M:=T\cup V$, to
  obtain a properly immersed capped grope~$G_{1.5}^c$ in~$T\cup V$.

  Observe that
  $(T\cup V) \sm G_{1\ed 1} = V\cup (T_{1{\ed 1}} \sm G_{1\ed 1})$,
  where $V$ and $(T_{1{\ed 1}} \sm G_{1\ed 1})$ are glued along
  neighbourhoods of the attaching curves for $T_{2\ed 2}$.  By
  Lemma~\ref{lemma:pi-1-T_1-1-setminus-G_1-1} and a straightforward
  application of the Seifert-Van Kampen theorem, it follows that
  $\pi_1(V\cup (T_{1{\ed 1}} \sm G_{1\ed 1})) \cong \Z$, which is good
  by Theorem~\ref{theorem:krushkal-quinn}.  Apply
  Theorem~\ref{theorem:grope-to-disc-main} to $G_{1.5}^c$ in
  $M:= T\cup V$, to yield a flat embedded disc in $T\cup V$ bounded
  by~$C(T)$.
\end{proof}

\begin{remark}
  In the above proof of Theorem~\ref{theorem:main-height-2}, we have
  shown that $C(T)$ is slice in the submanifold $T\cup V\subset W$.
\end{remark}

\section{Slice knots}
\label{section:slice-knots}

In this section we apply the results on Casson towers of
Section~\ref{section:casson-towers} to produce a new family of slice
knots in~$S^3$.

\subsection{Band sums of Whitehead doubles}
\label{subsection:band-sum-whitehead-double}

In this subsection, as promised in the introduction, we use
Theorem~\ref{theorem:main-height-2} to give a proof of
Theorem~\ref{theorem:main-slice-knots}, which we state here again for
the reader's convenience: \emph{suppose $L$ is an $m$-component
  homotopically trivial link, and $K$ is a knot obtained from $\Wh(L)$
  by applying $m-1$ band sum operations.  Then $K$ is slice.}

% Recall that a \emph{band sum operation} on a link $L$ in $S^3$ is
% performed as follows.  A \emph{band} joining components $J_0$ and
% $J_1$ of $L$ is an embedding $f\colon [0,1]\times[0,1] \to S^3$ such
% that $f(t\times [0,1]) \subset J_t$ for $t=0,1$ and
% $f((0,1)\times[0,1])$ is disjoint from~$L$.  The link obtained by the
% band sum along the band $f$ is defined to be $\big(L\sm
% f(\{0,1\}\times[0,1])\big) \cup f([0,1]\times \{0,1\})$.

We begin, in Lemma~\ref{lemma:Wh-plumbed-handles}, with a well-known
observation on Whitehead doubles and plumbed handles.  To state it we
recall the definition of the (untwisted) Whitehead double of a framed
link in a general 3-manifold.  Let $\Wh\subset S^1\times D^2$ be the
standard untwisted Whitehead knot, that is, it is obtained by taking
the exterior of a component of a Whitehead link and then identifying
it with $S^1\times D^2$ under the zero framing.  (There are two
possibilities, $\Wh_+$ and $\Wh_-$, depending on the sign of the
clasp.)  The zero framing on the Whitehead link induces a framing on
$\Wh\subset S^1\times D^2$ which we call the zero framing.  For a
framed link $L$ in a general 3-manifold $M$, form an untwisted
Whitehead double $\Wh(L)$ of~$L$, which is a framed link, by replacing
a tubular neighbourhood of each component of $L$ with $(S^1\times
D^2,\Wh)$ under the framing of~$L$.  We also recall that the attaching
circle and double point loops of a plumbed handle are framed as in
Definition~\ref{definition:casson-tower}.

\begin{lemma}[Plumbed handles for Whitehead doubles]
  \label{lemma:Wh-plumbed-handles}
  Suppose $L$ is an $m$-component framed link in a 3-manifold~$M$.
  Then there exist plumbed handles $T_i$
  \textup{(}$i=1,\ldots,m$\textup{)} disjointly embedded in
  $M\times[0,1]$ such that each $T_i$ has exactly one self-plumbing
  with double point loop $\alpha_i$, $\bigsqcup_i C(T_i) =
  \Wh(L)\times 0$, $\bigsqcup_i \alpha_i = L\times 1$ as framed links,
  $T_i\cap (M\times 0) = \partial_-(T_i)$, and $T_i\cap (M\times 1)$
  is a tubular neighbourhood of $\alpha_i \subset \partial_+(T_i)$.
\end{lemma}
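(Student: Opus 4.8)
The plan is to construct each $T_i$ inside a disjoint product region $\nu(L_i)\times[0,1]$, where $\nu(L_i)\subset M$ is a tubular neighbourhood of $L_i$ identified with $S^1\times D^2$ via the framing of $L_i$. Since $\Wh(L)$ is by definition obtained by replacing each $\nu(L_i)$ with $(S^1\times D^2,\Wh)$ and the $\nu(L_i)$ are disjoint, the whole statement reduces to the single model case: $M=S^1\times D^2$, $L=S^1\times 0$ with the zero framing, and $\Wh(L)=\Wh$ the standard Whitehead knot in $S^1\times D^2$. In this case one must produce a plumbed handle $T\subset (S^1\times D^2)\times[0,1]$ with exactly one self-plumbing whose double point loop $\alpha$ equals $(S^1\times 0)\times 1$ as framed circles, with $C(T)=\Wh\times 0$, with $T\cap\big((S^1\times D^2)\times 0\big)=\partial_-(T)$, and with $T\cap\big((S^1\times D^2)\times 1\big)$ a tubular neighbourhood of $\alpha$; the general case then follows by taking the union of such models over the regions $\nu(L_i)\times[0,1]$.

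To build $T$ in the model I would start from the standard ``clasped disc'' bounded by $\Wh$: the Whitehead pattern is the boundary of a band that runs once along the solid torus and returns alongside itself, with a single clasp where the band turns around, and resolving that clasp yields an immersed disc $\Delta_0\subset S^1\times D^2$ with exactly one transverse double point, whose double point loop runs once around the solid torus. Push the interior of $\Delta_0$ into the collar direction, and then isotope, rel boundary and within $(S^1\times D^2)\times[0,1]$, so that an annular neighbourhood of the double point loop is carried to the top level $(S^1\times D^2)\times 1$ as a parallel copy of $(S^1\times 0)\times 1$, while the rest of the disc hangs down into the interior. Call the result $\Delta$ and let $T$ be a narrow closed tubular neighbourhood of $\Delta$ in $(S^1\times D^2)\times[0,1]$; since the normal bundle of a disc in a $4$-manifold is trivial, $T\cong D^2\times D^2$, and the single double point of $\Delta$ becomes a single self-plumbing, so $T$ is a plumbed handle as required. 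With the tubular neighbourhood chosen narrowly, $T$ meets the bottom level only in $\partial_-(T)=\nu(\partial\Delta)=\nu(\Wh\times 0)$ and the top level only in a tubular neighbourhood of the double point loop $\alpha\subset\partial_+(T)$, which supplies all the claimed intersection conditions.

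The one substantive point is to verify the two framing assertions, and this is where I expect the only real difficulty. By Definition~\ref{definition:casson-tower} the attaching circle of a plumbed handle is framed by the restriction of the unique framing on the core disc \emph{before} plumbing, while the double point loop carries the preferred framing characterised there. Since $\Delta$ is a pushed-in Seifert surface for $\Wh$ with a single clasp, and the top part of $\Delta$ is a parallel of the core of the solid torus, these two framings should come out to be exactly the zero framing of $\Wh\subset S^1\times D^2$ (hence the framing induced from the zero framing of $L$) and the zero framing of $(S^1\times 0)\times 1$ (hence $L\times 1$), respectively. One must reconcile the ``twist by $k_--k_+$'' normalisation of Definition~\ref{definition:casson-tower} with the zero-framing conventions built into the definitions of $\Wh\subset S^1\times D^2$ and of the Whitehead double of a framed link, and confirm that the single clasp contributes precisely the framing correction that makes both $C(T)$ and $\alpha$ pick up the zero framings; untwistedness of the double is essential here, since a $t$-twisted double would give $\alpha$ framing twisted by $t$ relative to $L\times 1$. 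This is most safely carried out by matching the model with the standard Kirby diagram of Figure~\ref{fig:plumbed-handle-diagram} for $k=1$ --- in which $C$ is the zero-framed unknot, $a_1$ is the zero-framed double point loop, and $C\cup a_1$ is a zero-framed Whitehead link --- and invoking \cite[Lemma~2 of Lecture~I]{Casson-1986-towers}; everything else is an explicit isotopy inside $(S^1\times D^2)\times[0,1]$.
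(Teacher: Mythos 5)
Your proposal is correct and takes essentially the same route as the paper: the paper likewise first describes the clasp-resolution ``movie picture'' construction of the immersed core discs with double point loops pushed to $L\times 1$, and then makes the framing assertions rigorous exactly as you suggest, by identifying the one-self-plumbing model $T\cong S^1\times D^3=S^1\times D^2\times I$ through its standard Kirby diagram (where $C=\Wh\times 0$ and $a_1=S^1\times 0\times 1$ are zero-framed, citing Casson) and inserting a copy into each framed neighbourhood $\nu(L_i)\times[0,1]$. The only difference is cosmetic: the paper's rigorous argument dispenses with the explicit clasped disc and reads all the conclusions off the straightened Kirby-diagram model, which is precisely the verification you defer to in your final paragraph.
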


We call the plumbed handles in Lemma~\ref{lemma:Wh-plumbed-handles} the
\emph{standard plumbed handles} between $\Wh(L)$ and~$L$.

\begin{proof}
  The best geometric way to understand our plumbed handles $T_i$ is to
  construct the core discs directly: undo the clasp of the
  Whitehead doubling operations on $L$ via a regular homotopy, and
  cap off the resulting trivial link with disjoint discs. We
  obtain an immersion of $m$~discs in $M\times [0,1]$ bounded by
  $\Wh(L)\times 0$, and then by thickening this, we obtain the plumbed
  handles~$T_i$.  Furthermore, in this construction, by regarding the
  regular isotopy as a movie picture of the core discs, it can be seen
  that the double point loops on the core discs can be pushed to
  $L\times 1$ along embedded annuli.  It follows that we may thicken
  the core discs in such a way that $T_i\cap(M\times 1)$ is a tubular
  neighbourhood of the double point loop
  $\alpha_i\subset \partial_+(T_i)$.  The framing condition can also
  be verified by investigating the movie picture carefully.

  The above assertions can be verified rigorously by the following
  alternative description.  Recall that a plumbed handle $T$ with one
  self-plumbing, together with the attaching circle $C$ and the double
  point loop $a_1$, is described by the standard Kirby diagram in
  Figure~\ref{fig:plumbed-handle-diagram} ($k=1$ for now), where $C$
  and $a_1$ are zero framed by
  Definition~\ref{definition:casson-tower}.  In particular
  $T\cong S^1\times D^3$.  By straightening the dotted circle in the
  Kirby diagram, it follows that if we write
  $T=S^1\times D^2\times I$, then we may assume
  $C = \Wh\times 0 \subset S^1\times D^2\times 0$ as framed circles,
  and $a_1 = S^1\times 0 \times 1 \subset S^1\times D^2\times 1$,
  framed by the product structure.  Now, the framing of $L$ gives us
  an identification of a tubular neighbourhood of
  $L\times [0,1]\subset M\times[0,1]$ with
  $L\times D^2\times I = \bigsqcup^m (S^1\times D^2\times I) =
  \bigsqcup^m T$,
  $m$ disjoint plumbed handles.  By the above and by the definition of
  $\Wh(L)$, it follows that the attaching circles of these plumbed
  handles form the framed link $\Wh(L)\times 0$, and the double point
  loops form the framed link $L\times 1$.
\end{proof}

\begin{proof}[Proof of Theorem~\ref{theorem:main-slice-knots}]

  Attach the bands used in the band sum operations for
  $\Wh(L)=\Wh(L)\times 0$ to the annuli
  $\Wh(L)\times[0,\frac12]\subset S^3\times[0,\frac12]$ and push them
  slightly, to obtain a planar surface with $m+1$ boundary components in $S^3\times [0,\frac12]$
  cobounded by $K\times 0$ and~$\Wh(L)\times\frac12$.  The zero
  framings on $\Wh(L)$ and $K$ extend to a framing of the planar
  surface.  Thicken the planar surface in $S^3\times[0,\frac12]$ and
  attach the standard plumbed handles in $S^3\times[\frac12,1]$
  between $\Wh(L)$ and $L$ given by
  Lemma~\ref{lemma:Wh-plumbed-handles}. This constructs a single
  plumbed handle $T_{1\ed1}$ embedded in $S^3\times[0,1]$.  It has
  $K\times 0$ as the attaching circle and $L\times 1$ as the double
  points, by Lemma~\ref{lemma:Wh-plumbed-handles}.  We will use
  $T_{1\ed1}$ as the first stage of a Casson tower.

  Next, view $S^3\times [0,1]$ as a collar neighbourhood of the
  boundary of $D^4 = S^3\times [0,1] \cup_{S^3\times 1} (\text{smaller
  } D^4)$.  Since $L$ is homotopically trivial, there are disjoint
  immersed discs in the smaller $D^4$, which can be thickened to
  plumbed handles whose attaching circles form $L\times 1$.  We want
  to use these plumbed handles for the second stage.  Observe the
  following general fact, which follows from
  Definition~\ref{definition:casson-tower}: the preferred framing of
  the attaching circle of a plumbed handle embedded in $D^4$ with
  $\partial_-\subset S^3$ is the zero framing in~$S^3$.  Apply this to
  our case: since the double point loops $L\times 1$ of~$T_{1\ed 1}$
  are zero framed by Lemma~\ref{lemma:Wh-plumbed-handles}, it follows
  that we can attach these plumbed handles to $T_{1\ed1}$, in the
  smaller $D^4$, to yield a height 2 Casson tower, say~$T$.

  By construction, $C(T)=K$, the second stage $T_{2\ed2}$ of $T$
  lies in the smaller~$D^4$, and the first stage $T_{1\ed 1}$ lies in
  the collar $S^3\times[0,1]$ of the boundary of the bigger
  4-ball~$D^4$.  Since the smaller $D^4$ is simply connected, we can
  apply Theorem~\ref{theorem:main-height-2} to obtain a flat
  embedded disc bounded by $K$ as claimed.
\end{proof}

The following is an immediate corollary.  Recall that a link $L$ in
$S^3$ is \emph{weakly slice} if it bounds a flat embedding of a planar
surface in~$D^4$.

\begin{corollary}\label{cor:wh-K-slice}
  The Whitehead double of any homotopically trivial link is weakly
  slice.
\end{corollary}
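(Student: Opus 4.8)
The plan is to deduce this directly from Theorem~\ref{theorem:main-slice-knots}, or rather from the mechanism underlying its proof, observing that the band sum operations were the only thing that prevented the Whitehead double of a link from being slice as a link. Let $L$ be an $m$-component homotopically trivial link in $S^3$. I would run the first two paragraphs of the proof of Theorem~\ref{theorem:main-slice-knots} essentially verbatim, but \emph{without} performing any band sums: attach the standard plumbed handles of Lemma~\ref{lemma:Wh-plumbed-handles} between $\Wh(L)$ and $L$ inside $S^3 \times [0,1]$, giving $m$ disjoint plumbed handles $T^{(i)}_{1\ed 1}$ with attaching circles $\Wh(L) \times 0$ and double point loops $L \times 1$, and then, using that $L$ is homotopically trivial, cap off $L \times 1$ by disjoint immersed discs in a smaller $D^4$, thickened to plumbed handles with the correct (zero) framing by the same framing observation used in that proof. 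This produces $m$ disjoint height~$2$ Casson towers $T^{(i)}$, with $\bigsqcup_i C(T^{(i)}) = \Wh(L)$, each of whose second stage lies in the simply connected smaller $D^4 \subseteq \overline{D^4 \sm T_{1\ed 1}}$.

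The next step is to apply a link version of Theorem~\ref{theorem:main-height-2}. The theorem as stated is for a single Casson tower, but its proof goes through Theorem~\ref{theorem:grope-to-disc-main}, which already handles multi-component gropes and concludes with \emph{disjoint} flat embedded discs; one simply needs the fundamental group hypothesis to hold for each component. Here the relevant group is the image of $\pi_1(V^{(i)} \cup (T^{(i)}_{1\ed 1} \sm G_{1\ed 1}))$, and since each second stage sits in the simply connected smaller $D^4$ and the first stage contributes only a meridian by Lemma~\ref{lemma:pi-1-T_1-1-setminus-G_1-1}, each such image is $\Z$, which is good by Theorem~\ref{theorem:krushkal-quinn}. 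Thus we obtain disjoint flat embedded discs in $D^4$ bounded by the components of $\Wh(L)$, which is precisely the statement that $\Wh(L)$ is slice (hence in particular weakly slice, bounding the disjoint union of discs as a degenerate planar surface).

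I do not expect any serious obstacle here; the corollary is genuinely immediate once one notes that the band sums in Theorem~\ref{theorem:main-slice-knots} were cosmetic from the point of view of the Casson tower construction. The one point requiring a sentence of care is that we are now slicing a \emph{link} rather than a knot, so we should either invoke the multi-component conclusion of Theorem~\ref{theorem:grope-to-disc-main} directly (the cleanest route), or remark that the $m$ towers $T^{(i)}$ are disjoint and the discs produced for distinct towers lie in disjoint neighbourhoods and are therefore automatically disjoint. Either way the disjointness of the resulting slice discs is built into the construction, so no Whitney-trick-style separation argument is needed.
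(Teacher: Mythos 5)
There is a genuine gap, and it sits exactly where you wave your hands: the claim that a ``link version'' of Theorem~\ref{theorem:main-height-2} applies and that the constructions for distinct towers stay in disjoint neighbourhoods. Your $m$ disjoint height~$2$ towers $T^{(i)}$ do exist, but they are not enough data to run Theorem~\ref{theorem:grope-to-disc-main}. To build the capped gropes of Proposition~\ref{proposition:capped-grope-from-embedded-casson-tower} you must also choose, for each $i$, null homotopies in the smaller $D^4$ of the double point loops of the second-stage discs of $T^{(i)}$ (these become the big caps). Simple connectivity of the smaller $D^4$ supplies such null homotopies, but it gives you no control allowing them to miss the second stages and caps of the \emph{other} towers; generically they pierce them. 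If you leave those intersections, the union of the $m$ capped gropes is not a proper immersion (caps meet body components of other gropes); if you remove them by tubing into transverse spheres as in Lemma~\ref{lemma:big-small-caps}, you create intersections between caps belonging to different components. Either way the thickenings of the $G^{c,(i)}$ cannot be assumed disjoint, the components of $\nu G^c \sm G_{1\ed 1}$ merge, and the image of $\pi_1(\nu G^c\sm G_{1\ed 1})$ in $\pi_1(M\sm G_{1\ed 1})$ is no longer contained in the $\Z$ generated by a single meridian: here $\pi_1(M\sm G_{1\ed 1})$ is free on the $m$ meridians of the bottom stage surfaces, and the relevant image is in general a nonabelian free group, which is not known to be good. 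This is precisely the failure mode the paper flags in the remark before the proof of Theorem~\ref{theorem:main-height-3}, and it is why Theorem~\ref{theorem:main-height-2} is stated for a single tower. The band sums in Theorem~\ref{theorem:main-slice-knots} are therefore not cosmetic: they fuse the $m$ bottom stages into one plumbed handle with one bottom surface and one meridian, which is exactly what forces the relevant image to be infinite cyclic.

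A decisive sanity check: your intermediate conclusion---disjoint slice discs for all components of $\Wh(L)$ whenever $L$ is homotopically trivial---is the ``if'' direction of the Topological Whitehead Double Conjecture~\ref{conjecture:topological-whitehead-double}, which the paper states is open (the best known result, due to Freedman and Teichner, requires $L$ to be homotopically trivial${}^{+}$), so no argument of this shape can be ``genuinely immediate''. The paper's proof of Corollary~\ref{cor:wh-K-slice} deliberately avoids all of this and proves only the weak statement: band $\Wh(L)$ together into a knot $K$, which is slice by Theorem~\ref{theorem:main-slice-knots}, then reattach the $m-1$ bands, pushed slightly into $D^4$, to a slice disc for $K$; the result is a flat embedded planar surface with $m$ boundary components bounded by $\Wh(L)$, which is exactly weak sliceness. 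You should argue this way instead.
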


\begin{proof}
  If $L$ is an $m$-component homotopically trivial link, then a knot
  $K$ obtained by $m-1$ bands sum operations on $\Wh(L)$ is slice by
  Theorem~\ref{theorem:main-slice-knots}.  Attaching the bands to a
  slicing disc and pushing them slightly, we obtain a punctured disc
  bounded by $\Wh(L)$.
\end{proof}

As another consequence of Theorem~\ref{theorem:main-slice-knots}, we
prove Corollary~\ref{corollary:main-whitehead-ribbon-slice}, which
says the following: \emph{suppose $L$ is an $m$-component
  homotopically trivial link, and~$R$ is a ribbon knot.  Consider a
  split union $\Wh(L)\sqcup R$ in $S^3$, and choose $m$ disjoint bands
  which join each component of $\Wh(L)$ to~$R$, such that in addition
  the bands are disjoint from an immersed ribbon disc for $R$ in $S^3$
  and are disjoint from Seifert surfaces for $\Wh(L)$.  Then the knot
  $K$ obtained from $\Wh(L)\sqcup R$ by these band sum operations
  along the arcs is slice.  }

\begin{proof}[Proof of
  Corollary~\ref{corollary:main-whitehead-ribbon-slice}]

  We first observe that a ribbon knot can be viewed as the result of
  band sum operations performed on a trivial link. Given a ribbon
  immersion $D^2 \looparrowright S^3$, by removing an
  $\epsilon$-neighbourhood of the singularities meeting the boundary
  of $D^2$, we obtain disjoint embedded discs, which are bounded by a
  trivial link.  This is indeed undoing band sum operations, since
  each removed $\epsilon$-neighborhood can be replaced as a band.

  Now, choose a ribbon embedding bounded by the given ribbon knot $R$.
  We may assume that the feet of the bands used to produce $K$ from
  $\Wh(L)\sqcup R$ are disjoint from the ribbon singularities.  Undo
  the band sum operations, to obtain $\Wh(L)\sqcup R$ from $K$, and
  then undo the band sum operations for the ribbon knot $R$ as in the
  previous paragraph, to transform $\Wh(L)\sqcup R$ into a split union
  of $\Wh(L)$ and a trivial link.  A trivial link is the Whitehead
  double of a trivial link, say $L_0$, from which it follows that our
  knot $K$ is obtained by band sum operations from the Whitehead
  double of the split union $L \sqcup L_0$.  Since both $L$ and $L_0$
  are homotopically trivial, $L\sqcup L_0$ is homotopically trivial.
  By Theorem~\ref{theorem:main-slice-knots}, $K$ is a slice knot.
\end{proof}

\subsection{Attempts to apply a surgery method}
\label{subsection:properties-of-slice-knots}

A standard surgery theoretic slicing process for a given knot $K$ with
zero-surgery manifold $M_K$ starts with an epimorphism $\pi_1(M_K)\to
G$ onto an appropriate ribbon group~$G$; here a ribbon group is the
fundamental group of the complement of a slicing disc in $D^4$
obtained by resolving singularities of a ribbon immersion
$D^2\looparrowright S^3$.  Then one applies topological surgery over
the group $G$, to obtain a slice disc exterior whose fundamental group
is~$G$. (For implementations of this strategy for knots, see for example~\cite{Freedman-Quinn:1990-1, Friedl-Teichner:2005-1}, while for links see~\cite{Cochran-Friedl-Teichner:2006-1}.)  With
current knowledge the surgery strategy can only be completed for
certain special cases, since it is unknown whether surgery works for
all ribbon groups.  There are only two ribbon groups for which surgery
is known to work: $\Z$ and $G_{6_1}:=\langle a, t\mid
ta^2t^{-1}=a\rangle$, a ribbon group for the Stevedore knot~$6_1$.
They were used in~\cite{Freedman-Quinn:1990-1, Friedl-Teichner:2005-1}
respectively.  We will show why this surgery approach fails to find slice disc exteriors for our knots from Corollary~\ref{corollary:main-whitehead-ribbon-slice} in many cases.

%In most cases (including
%Figure~\ref{figure:whitehead-ribbon-construction}),
%Corollary~\ref{corollary:main-whitehead-ribbon-slice} produces a knot
%$K$ admitting a natural epimorphism $\pi_1(M_K) \to G$ onto a ribbon
%group $G$ for the ribbon knot $R$ used in the construction; see
%Proposition~\ref{proposition:properties-of-slice-knots}.  When $G$ is
%neither $\Z$ nor $G_{6_1}$, the slice disc exterior strategy cannot be
%carried out, at least for now, for this epimorphism, since we do not
%know whether $G$ is a good group.

\begin{proposition}
  \label{proposition:properties-of-slice-knots}
  Suppose $K$ is a slice knot obtained by band sum operations on
  $\Wh(L)\sqcup R$ as in
  Corollary~\ref{corollary:main-whitehead-ribbon-slice}.
  \begin{enumerate}
 \item\label{item:slice-knot-alexander-poly} The knots $K$ and $R$
    have $S$-equivalent Seifert matrices.  Consequently, $\Delta_K(t)
    = \Delta_R(t)$.
 \item \label{item:slice-knot-epimorphism} There is
    an epimorphism of $\pi_1(M_K)$ onto $\pi_1(M_R)$ which takes a
    meridian to a meridian.
  % \item\label{item:slice-knot-ext} For the ribbon group $G$ of a
  %   ribbon disc of $R$, we have $\Ext^1_{\Z[G]} (H_1(M_K;\Z[G]),
  %   \Z[G]) \cong \Ext^1_{\Z[G]} (H_1(M_R;\Z[G]), \Z[G])$.
 \end{enumerate}
\end{proposition}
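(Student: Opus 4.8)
The plan is to prove both statements by unwinding the band-sum description of $K$ furnished by Corollary~\ref{corollary:main-whitehead-ribbon-slice}: for \ref{item:slice-knot-alexander-poly} one exhibits a Seifert surface for $K$ assembled from Seifert surfaces for the pieces, and for \ref{item:slice-knot-epimorphism} one builds a meridian-preserving epimorphism of knot groups and checks that it descends to the zero-surgery manifolds. For \ref{item:slice-knot-alexander-poly} I would first record disjoint Seifert surfaces for the pieces being banded together. Since $\Wh(L)\sqcup R$ is split and $L$ is homotopically trivial, its components have pairwise linking number zero, so we may put $R$ in a ball $B_R$ and each component $U_i$ of $\Wh(L)$ in the solid torus $N_i$ neighbouring the corresponding component of $L$, with $B_R$ and the $N_i$ pairwise unlinked. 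Each $U_i$ bounds the standard genus-one Seifert surface $F_i\subset N_i$ of an untwisted Whitehead double, with Seifert matrix $\left(\begin{smallmatrix}0&1\\0&\pm1\end{smallmatrix}\right)$, while $R$ bounds a Seifert surface $F_R$ disjoint from the bands (built from the given immersed ribbon disc for $R$, which the bands avoid). After a small isotopy the regions $N_i$, $B_R$ may also be taken disjoint from the bands, so the $m$ bands realise a boundary connect sum $\widehat F=F_1\,\natural\cdots\natural\, F_m\,\natural\, F_R$, a Seifert surface for $K$, with symplectic basis obtained by juxtaposing symplectic bases of the summands. As each basis curve is supported in one of the pairwise-unlinked regions $N_i$ or $B_R$, every linking number between basis curves of distinct summands vanishes, so the Seifert matrix of $\widehat F$ is the block sum $V_{F_1}\oplus\cdots\oplus V_{F_m}\oplus V_{F_R}$. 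Since $\left(\begin{smallmatrix}0&1\\0&\pm1\end{smallmatrix}\right)$ is congruent to $\left(\begin{smallmatrix}0&1\\0&0\end{smallmatrix}\right)$ and hence $S$-equivalent to the empty matrix (equivalently, the Alexander module of $U_i$ is trivial), this block sum is $S$-equivalent to $V_{F_R}$; thus $K$ and $R$ have $S$-equivalent Seifert matrices, and $\Delta_K(t)\doteq\Delta_R(t)$ follows because the Alexander polynomial is an $S$-equivalence invariant.

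For \ref{item:slice-knot-epimorphism} I would build $K$ from $R$ one band at a time, writing $R=K_0,K_1,\dots,K_m=K$ with $K_j=K_{j-1}\#_{b_j}U_j$, and compose meridian-preserving epimorphisms $\pi_1(M_{K_j})\twoheadrightarrow\pi_1(M_{K_{j-1}})$. The key geometric input is that at the $j$-th stage one may take the solid torus $N_j$ carrying $U_j$ small enough, and isotope the remaining bands slightly, so that $N_j$ is disjoint from $K_{j-1}$; since $U_j$ has winding number zero in $N_j$ it bounds an immersed disc $\delta_j\subset N_j$, which is therefore disjoint from $K_{j-1}$. Using $\delta_j$ to collapse the detour created by $b_j$ and $U_j$ produces a degree-one map $(S^3,K_j)\to(S^3,K_{j-1})$ carrying a meridian of $K_j$ to a meridian of $K_{j-1}$; the remaining point is to see that it carries the $0$-framed longitude $\lambda_{K_j}$ to $\lambda_{K_{j-1}}$, so that it extends over the surgery solid tori to a map $M_{K_j}\to M_{K_{j-1}}$. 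This framing compatibility reduces to the fact that the closed-up detour loop is null-homotopic in $S^3\sm K_{j-1}$, which follows from $\delta_j$ together with the disjointness of $N_j$ from $K_{j-1}$, so that the detour contributes nothing to the $0$-framing. Composing the maps $M_{K_m}\to M_{K_{m-1}}\to\cdots\to M_{K_0}$ then yields the epimorphism $\pi_1(M_K)\twoheadrightarrow\pi_1(M_R)$ taking a meridian to a meridian.

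I expect the main obstacle to be precisely this framing bookkeeping in \ref{item:slice-knot-epimorphism}: verifying at each stage that the $0$-framed longitude of $K_j$ is sent to the $0$-framed longitude of $K_{j-1}$, which requires keeping careful track of how the band $b_j$ is twisted and linked relative to $N_j$ and of the precise image of $\lambda_{K_j}$ under the collapse. By contrast \ref{item:slice-knot-alexander-poly} is essentially formal once one has arranged the surfaces and bands in the disjointness pattern that the hypotheses of Corollary~\ref{corollary:main-whitehead-ribbon-slice} are designed to permit.
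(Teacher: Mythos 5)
Your argument for part~(\ref{item:slice-knot-alexander-poly}) is essentially the paper's: one checks that the Seifert matrix of the banded-together surface is the block sum of $m$ copies of $\left(\begin{smallmatrix}0&1\\0&\pm1\end{smallmatrix}\right)$ with a Seifert matrix of $R$, using that all pairwise linking numbers vanish, and this block sum is an elementary $S$-equivalence enlargement of $V_R$. (Your phrase ``pairwise unlinked regions'' is stronger than what is true---for $L$ the Whitehead link the solid tori $N_i$ are geometrically linked---but the vanishing of cross-linking numbers only needs winding numbers and $\operatorname{lk}(L_i,L_j)=0$, so this part is fine.)

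Part~(\ref{item:slice-knot-epimorphism}) has a genuine gap, in two places. First, the geometric setup you assume is not granted by the hypotheses of Corollary~\ref{corollary:main-whitehead-ribbon-slice}: the bands are only required to miss the \emph{Seifert surfaces} for $\Wh(L)$ (and an immersed ribbon disc for $R$), not the solid tori $N_i$. A band $b_i$ with $i\neq j$ may run through $N_j$---the genus-one Seifert surface does not separate the solid torus---and may do so essentially, e.g.\ winding in the core direction or hooking around strands of $U_j$, so it cannot in general be pushed off $N_j$ by a ``slight isotopy'' while staying disjoint from the link and the surfaces. Consequently you cannot assume $N_j$ (or an immersed null-homotopy $\delta_j\subset N_j$ of $U_j$) is disjoint from $K_{j-1}$. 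Second, and more seriously, even granting that disjointness, ``using $\delta_j$ to collapse the detour'' is not a construction: an \emph{immersed} disc cannot be collapsed, and there is no evident map of pairs $(S^3,K_j)\to(S^3,K_{j-1})$, degree one or otherwise, produced by this data (the classical collapse exists only when $U_j$ together with the band lies in a ball meeting $K_{j-1}$ in a trivial arc, i.e.\ for genuine connected sums). The epimorphism asserted in part~(\ref{item:slice-knot-epimorphism}) is not induced by any map $M_K\to M_R$ in the paper's argument; it comes from a surgery/cobordism argument (Lemma~\ref{lemma:epimorphism-for-band-sum}): $M_K$ is zero-framed surgery on $M_{\Wh(L)\sqcup R}$ along circles $\alpha_i$ dual to the bands, the Pontryagin--Thom map of the disjoint Seifert surfaces for $\Wh(L)$ (this is exactly where the band-misses-Seifert-surface hypothesis enters) sends $\alpha_i$ to a word $x_i\zeta_i$ with $\zeta_i\in\pi_1(M_R)$, and the $2$-handle cobordism $W$ gives $\pi_1(M_K)\twoheadrightarrow\pi_1(W)\twoheadrightarrow (F*\pi_1(M_R))/\langle\langle x_i\zeta_i\rangle\rangle\cong\pi_1(M_R)$, meridian to meridian. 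Relatedly, your worry about longitudes is not the real difficulty, and ``the detour loop is null-homotopic in $S^3\sm K_{j-1}$'' is in any case not the condition that controls zero-framings (that is a homological, linking-number condition). To repair your inductive scheme you would essentially have to replace the collapse step by the one-band case of Lemma~\ref{lemma:epimorphism-for-band-sum}, at which point you have reproduced the paper's proof.
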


%Proposition~\ref{proposition:properties-of-slice-knots} indeed follows
%from more general statements stated below as
%Lemmas~\ref{lemma:band-sum-seifert-matrix}
%and~\ref{lemma:epimorphism-for-band-sum}.
Before proving
Proposition~\ref{proposition:properties-of-slice-knots}, we discuss
some of its consequences.  First, from
Proposition~\ref{proposition:properties-of-slice-knots}~(\ref{item:slice-knot-epimorphism}),
it follows that for any ribbon group $G$ for $R$, there is an
epimorphism of $\pi_1(S^3 \sm K)$ onto~$G$, as mentioned in the
introduction.  The second consequence is that if $\Delta_R(t)$ is not
one and is not divisible by $t-\frac12$ (over~$\Q$), then the
topological surgery method to construct a slice disc exterior, which
we discussed in the introduction, does not work for~$K$.  In fact, to
apply topological surgery, one needs to start with an epimorphism of
$\pi_1(S^3\sm K)$ onto a ribbon group $G$ for which surgery is known
to work; the only such ribbon groups are $\Z$ and $G_{6_1}:=\langle a,
t\mid ta^2t^{-1}=a\rangle$, a ribbon group for the Stevedore knot
$6_1$.  For the $G=\Z$ case, it is known that the surgery programme
slices a knot if and only if $\Delta_K(t)=1$, essentially because
defining a surgery problem requires a degree one normal map with
target $S^1 \times D^3$ which restricts to a $\Z[\Z]$ homology
equivalence on the boundary.  By
Proposition~\ref{proposition:properties-of-slice-knots}~(\ref{item:slice-knot-alexander-poly}),
it follows that surgery cannot be carried out if $\Delta_R(t)\ne 1$.
Also, for $G=G_{6_1}$, if there were an epimorphism $\pi_1(S^3\sm K)
\to G_{6_1}$, then it would imply that $\Delta_K(t)$ is divisible by
the ``Alexander polynomial'' $\Delta_{G_{6_1}}(t)$ of $G_{6_1}$, which
is defined to be the order of the module
$(G_{6_1}'/G_{6_1}'')\otimes\Q \cong H_1(G_{6_1}';\Q)$ over the PID
$\Q[t^{\pm1}]$ as usual.  Indeed, $G_{6_1}$ is isomorphic to the
Baumslag-Solitar group $\big(\Z[t^{\pm1}] / (t-\frac12)\big) \rtimes
\langle t \rangle$, and we have $\Delta_{G_{6_1}}(t)=t-\frac12$.  From
this it follows that if $\Delta_R(t)$ is not divisible by $t-\frac12$,
then there is no epimorphism of $\pi_1(S^3\sm K)$ onto~$G_{6_1}$.  It
is conceivable that $K$ is smoothly concordant to a knot $J$ with
$\Delta_J = \Delta_{6_1}$, such that $J$ can be sliced using
\cite{Friedl-Teichner:2005-1}.  In this eventuality the resulting
slice disc would \emph{not} be homotopy ribbon.

Next we prove Proposition~\ref{proposition:properties-of-slice-knots}.

\begin{proof}[Proof of
  Proposition~\ref{proposition:properties-of-slice-knots}]

  First we prove part~(\ref{item:slice-knot-alexander-poly}).  The
  standard genus 1 Seifert surface of each component of $\Wh(L)$ has
  Seifert matrix
  $\left[\begin{smallmatrix}0&1\\0&1\end{smallmatrix}\right]$.  Since
  $L$ is homotopically trivial, each pairwise linking number of $L$ is
  zero.  It follows that the diagonal block sum of $m$ copies of
  $\left[\begin{smallmatrix}0&1\\0&1\end{smallmatrix}\right]$ and a
  Seifert matrix of $R$ is a Seifert matrix of $K$.  Consequently, $K$
  has a Seifert matrix $S$-equivalent to that of~$R$.

  The proof of part~(\ref{item:slice-knot-epimorphism}) follows
  immediately from Lemma~\ref{lemma:epimorphism-for-band-sum} below.
\end{proof}

\begin{lemma}
  \label{lemma:epimorphism-for-band-sum}
  Suppose $L$ is an $m$-component boundary link with Seifert surface
  $V$, and $J$ is a knot.  Suppose $K$ is a knot obtained from the
  split union $L\sqcup J$ by $m$ band sum operations along bands which
  join a component of $L$ to $J$ and whose interior is disjoint
  from~$V$.  Then there is an epimorphism $\pi_1(M_K)\to \pi_1(M_J)$
  which takes a meridian to a meridian.
\end{lemma}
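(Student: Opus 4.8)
The goal is an epimorphism $\pi_1(M_K)\twoheadrightarrow\pi_1(M_J)$ carrying a meridian to a meridian, so the plan is to build a degree-one map $M_K\to M_J$ which is the standard identification on the surgery solid tori; since a degree-one map of closed oriented $3$-manifolds is automatically $\pi_1$-surjective, this suffices. First I would fix a convenient model: realise the split union $L\sqcup J$ with $J$ in a ball disjoint from $L$, with the $L_i$ bounding the disjoint Seifert surfaces $V_i$ of $V=\bigsqcup_i V_i$, and with the $m$ bands $b_i$ (one per component of $L$, so that the band sum is connected) joining $L_i$ to $J$ and having interiors disjoint from $V$. Note that $K$ lies in an arbitrarily small regular neighbourhood of the $2$-complex $G=L\cup b_1\cup\cdots\cup b_m\cup J$, so $S^3\smallsetminus\nu G\subseteq S^3\smallsetminus\nu K$.

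The core of the argument is to produce a degree-one map $\rho\colon(S^3,K)\to(S^3,J)$ which restricts to the standard identification near the boundaries of the tubular neighbourhoods, built in two moves. First, a \emph{Seifert surface collapse}: the disjoint surfaces $V_i$ yield a degree-one map of pairs $(S^3,L)\to(S^3,U)$ onto an $m$-component unlink $U=\bigsqcup U_i$, standard near $\partial\nu L$, performed componentwise (the one-component case is the classical construction of a degree-one map from a knot exterior onto a solid torus by cutting along a Seifert surface and collapsing its handles); because the interiors of the $b_i$ avoid $V$, one can arrange this collapse to drag each band $b_i$ to a band $b_i'$ from $U_i$ to $J$ whose interior is disjoint from the spanning disc $D_i$ of $U_i$ produced by the collapse. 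Applying this to $K$ carries it to $K_0$, the band sum of $U$ onto $J$ along the $b_i'$. Second, each $U_i$ is now a split unknot joined to $J$ by a band missing $D_i$, hence a trivial appendage that can be removed by an ambient isotopy; so $K_0$ is isotopic to $J$, and composing gives $\rho$. Since $\rho$ restricts to a degree-one map of exteriors taking a meridian of $K$ to a meridian of $J$, it must take the $0$-framed longitude of $K$ to that of $J$, so $\rho$ extends over the zero-surgeries to the required degree-one map $M_K\to M_J$ sending a meridian to a meridian.

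The formal steps (extending a boundary-standard map of knot exteriors over the surgery tori, and degree-one implies $\pi_1$-epi) are routine, and the second move above is elementary; I expect the Seifert surface collapse — making it genuinely degree one, standard on the boundary, and compatible with the bands — to be the main obstacle, and it is where the hypotheses that $L$ is a boundary link and that the bands miss $V$ are really used. As a sanity check on the mechanism, one can see algebraically why $\rho_*$ should kill the longitude of $K$: the $0$-framed longitude $\lambda_{L_i}$ of each component of the boundary link is disjoint from every $V_j$ and hence reads the trivial word under the meridional map $\pi_1(S^3\smallsetminus L)\to F_m$ determined by the surfaces, while the longitude of $K$ is, up to conjugation, a product of $\lambda_J$ with conjugates of the $\lambda_{L_i}$; its image therefore lies in the normal closure of $\lambda_J$, which is exactly what is needed for the map to descend to the zero-surgeries.
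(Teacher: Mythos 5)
Your overall plan---build a proper degree-one map of exteriors $E_K\to E_J$ that is standard on the boundary, glue in the surgery solid tori, and use that degree-one maps are $\pi_1$-surjective---would indeed prove the lemma (in fact something stronger), but the key construction has a genuine gap exactly where you flagged it, and the flag does not discharge it. In the Seifert surface collapse, the map is only prescribed on $\nu L$ and on a bicollar $V\times[-1,1]$; everything else, namely $X=E_L\sm (V\times(-1,1))$, is sent to the complementary $3$-ball $B_0$ of the unlink picture by an arbitrary extension, which exists only because $B_0$ is contractible. You can certainly prescribe the map on $\nu J$ and on the bands to be an embedding (fixing the minor issue that the image of $K$ under a non-injective map need not be embedded), but the region $X$ contains far more of $E_K$ than the bands and $J$, and the uncontrolled extension over the rest of $X$ will in general sweep across $\nu J'$. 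Hence there is no reason that $\rho^{-1}(\nu J)=\nu K$, so $\rho$ does \emph{not} restrict to a map $E_K\to E_J$, the phrase ``$\rho$ restricts to a degree-one map of exteriors'' is unjustified, and the extension over the zero-surgery tori is unavailable. Repairing this means extending the boundary data over $X\sm(\nu J\cup\nu(\mathrm{bands}))$ with image in the (aspherical, non-simply-connected) exterior of $J'\cup b'$ inside $B_0$; that is an extension problem with genuine $\pi_1$-obstructions, and verifying it is essentially the whole content of the lemma, not a routine step. Note also that the existence of a degree-one map $M_K\to M_J$ is a strictly stronger assertion than the lemma's meridian-preserving $\pi_1$-epimorphism, so you are committed to proving more than is needed without the extra tools to do so.

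For comparison, the paper's proof stays entirely at the level of $\pi_1$ and avoids constructing any map of manifolds towards $M_J$: it uses the Kirby-calculus fact that $M_K$ is obtained from $M_{L\sqcup J}\cong M_L\# M_J$ by zero-framed surgery on $m$ curves $\alpha_i$ (meridians of the bands), applies the Pontryagin--Thom map of the Seifert surface $V$ to get $\phi\colon\pi_1(M_L)\twoheadrightarrow F$, and observes that since the band cores miss $V$ one has $(\phi*\id)(\alpha_i)=x_i\zeta_i$ with $\zeta_i\in\pi_1(M_J)$; attaching $2$-handles along the $\alpha_i$ gives a cobordism $W$ with $\pi_1(M_K)\twoheadrightarrow\pi_1(W)\twoheadrightarrow (F*\pi_1(M_J))/\langle\langle x_i\zeta_i\rangle\rangle\cong\pi_1(M_J)$. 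Your ``algebraic sanity check'' about the longitude is in the right spirit---it is essentially where the boundary-link and band hypotheses enter in both arguments---but on its own it only constrains a hypothetical map; it does not build one.
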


\begin{proof}
  Let $\gamma_i$ be the core arc of the band joining $J$ and the $i$th
  component $L_i$ of~$L$, for $i=1,\ldots,m$.  Using Kirby calculus it
  is not too hard to see (see e.g.\
  \cite[Proof~of~Theorem~4.1]{Cochran-Orr-Teichner:2002-1}) that the
  3-manifold $M_K$ is obtained from $M_{L\sqcup J}$ by zero-framed
  surgery along $m$ curves, say~$\alpha_i$, each of which bounds an
  embedded 2-disc that meets $L\sqcup J$ at two transverse
  intersection points, contains $\gamma_i$, and induces the framing
  of~$\gamma_i$.  See Figure~\ref{figure:band-sum-surgery}.

  \begin{figure}[H]
    \labellist\small
    \pinlabel{$\gamma_i$} at 12 28
    \pinlabel{$\alpha_i$} at 75 40
    \pinlabel{$L_i$} at 84 63
    \pinlabel{$J$} at 97 12
    \endlabellist
    \includegraphics{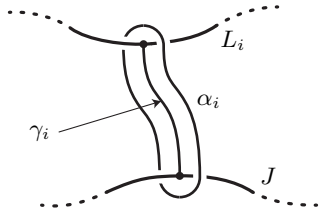}
    \caption{Band sum and surgery.}
    \label{figure:band-sum-surgery}
  \end{figure}

  The standard Pontryagin-Thom construction applied to the
  $m$-component Seifert surface $V$ gives a map
  \[
  S^3\sm \nu(L) \to \bigvee^m S^1
  \]
  which takes $S^3\sm \nu(V)$ to the wedge point; this induces an
  epimorphism $\phi\colon \pi_1(M_L) \to F :=$ free group of rank $m$,
  which takes a meridian of $L_i$ to the $i$th generator $x_i\in F$.

  Note that since $M_{L \sqcup J} \cong M_L \# M_J$, we have that
  $\pi_1(M_{L\sqcup J}) = \pi_1(M_{L})*\pi_1(M_J)$.  Furthermore, from
  the hypothesis that the interior of the arc $\gamma_i$ is disjoint
  from the Seifert surface $V$, it follows that
  $(\phi*\id)(\alpha_i) \in F * \pi_1(M_J)$ is of the form
  $x_i \zeta_i$ where $\zeta_i \in \pi_1(M_J)$.  Here $\phi*\id \colon \pi_1(M_{L})*\pi_1(M_J) \to F * \pi_1(M_J)$ is the map defined by sending elements of $\pi_1(M_J)$ in the free product to their image under~$\phi$.

  Let $W$ be the cobordism between $M_K$ and $M_{L\sqcup J}$
  obtained by attaching $m$ 2-handles along the curves $\alpha_i$ to
  the product $M_{L\sqcup J}\times[0,1]$.  We have an epimorphism
  \begin{equation}\label{equation:epimorphism-of-cobordism}
    \pi_1(W) \cong \frac{\pi_1(M_{L\sqcup
        J})}{\langle\alpha_1,\ldots,\alpha_m\rangle} =
    \frac{\pi_1(M_{L})*\pi_1(M_J)}{\langle\alpha_1,\ldots,\alpha_m\rangle}
    \xrightarrow{\phi*\id} \frac{F * \pi_1(M_J)}{\langle
      x_1\cdot \zeta_1, \ldots, x_m\cdot\zeta_m\rangle} \cong \pi_1(M_J).
  \end{equation}
  Also, turning $W$ upside down, $W$ is obtained by attaching
  2-handles to $M_K\times[0,1]$.  It follows that the inclusion
  induces an epimorphism $\pi_1(M_K) \to \pi_1(W)$.  Composing this
  with \eqref{equation:epimorphism-of-cobordism}, we obtain the desired
  epimorphism $\pi_1(M_K) \to \pi_1(M_J)$.  By construction, this
  takes a meridian to a meridian.
\end{proof}

Friedl and Teichner proposed conjectures related to necessary and
sufficient conditions for being homotopy ribbon, in \cite[Conjectures
1.6 and~1.8]{Friedl-Teichner:2005-1}.  It is an interesting question
whether the slice knots produced by
Corollary~\ref{corollary:main-whitehead-ribbon-slice} satisfy their
proposed conditions, that is, the Ext condition in
\cite[Conjecture~1.6]{Friedl-Teichner:2005-1} and the Poincar\'e
duality condition in \cite[Conjecture~1.8]{Friedl-Teichner:2005-1}.

\section{Slice links}
\label{section:slice-links}

In this section we present new slice links in $S^3$, using
Theorem~\ref{theorem:main} on distorted Casson towers.  We focus on a
constructions of links involving iterated Whitehead doubling, which is
naturally related to Casson towers.

\subsection{Kirby diagrams for distorted Casson towers}
\label{subsection:kirby-diagram-for-distorted-casson-tower}

In this subsection we discuss Kirby diagrams for arbitrary distorted
Casson towers, prior to their application in our construction of
another family of new slice links, presented in
subsection~\ref{subsection:distorted-iterated-ramified-whitehead-double}.

First we recall that a plumbing operation between two 2-handles in a
Kirby diagram gives us a new 1-handle and a clasp (whose sign is equal
to the sign of the plumbing) between the attaching circles of the
2-handles, as shown in Figure~\ref{figure:plumbing-kirby-diagram}.  As
a reference, see for
instance~\cite[Example~6.1.3]{Gompf-stipsicz-book}.

\begin{figure}[H]
  \labellist
  \small
  \pinlabel{$h$} at 10 5
  \pinlabel{$h'$} at 52 5
  \pinlabel{$h$} at 140 5
  \pinlabel{$h'$} at 182 5
  \endlabellist
  \includegraphics{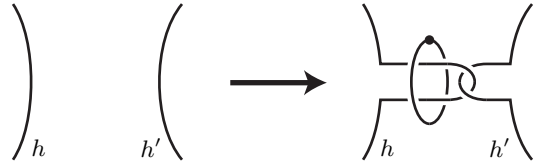}
  \caption{Plumbing of 2-handles in a Kirby diagram.}
  \label{figure:plumbing-kirby-diagram}
\end{figure}

Start with a standard Kirby diagram of a Casson tower. For example,
see Figure~\ref{figure:casson-tower-diagram}, which is a Casson tower
of height~4.  (It is a good exercise, for those not familiar with this
diagram, to build it using the above plumbing operation; or see
\cite{Casson-1986-towers}, \cite[Section~2]{Freedman:1982-1},
\cite[Chapter~12]{Freedman-Quinn:1990-1},
\cite[Example~6.1.3]{Gompf-stipsicz-book}.)  So far all the plumbing
operations are self-plumbings.  As our temporary convention, a circle
without a dot or a label designates a zero-framed attaching circle of
a 2-handle.

\begin{figure}[H]
  \labellist
  \footnotesize
  \pinlabel{$C(T)$} at -5 123
  \endlabellist
  \includegraphics[scale=.95]{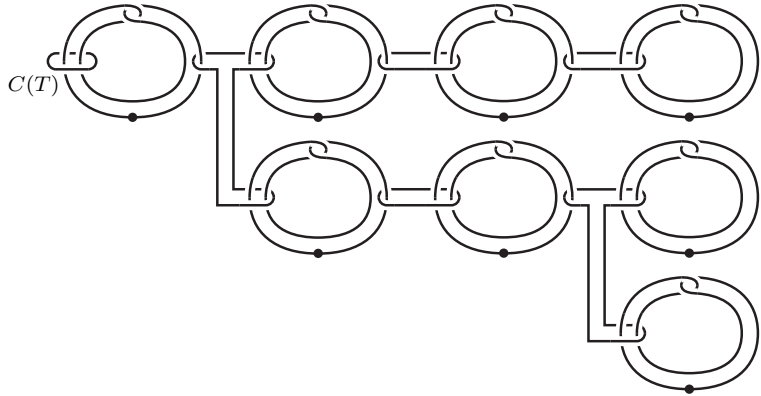}
  \caption{A Casson tower of height~4.  Unlabelled circles without a
    dot are zero-framed.}
  \label{figure:casson-tower-diagram}
\end{figure}

By applying the above plumbing operation for 2-handles, we can plumb a
stage 4 disc to another disc of stage two or higher.  For instance, by
plumbing a stage 4 disc in Figure~\ref{figure:casson-tower-diagram} to
a stage 2 disc and by plumbing another stage 4 disc to a stage 3 disc,
we obtain a distorted Casson tower described by the Kirby diagram in
Figure~\ref{figure:distorted-casson-tower-diagram}.

\begin{figure}[H]
  \labellist
  \footnotesize
  \pinlabel{$C(T)$} at -5 146
  \endlabellist
  \includegraphics[scale=.95]{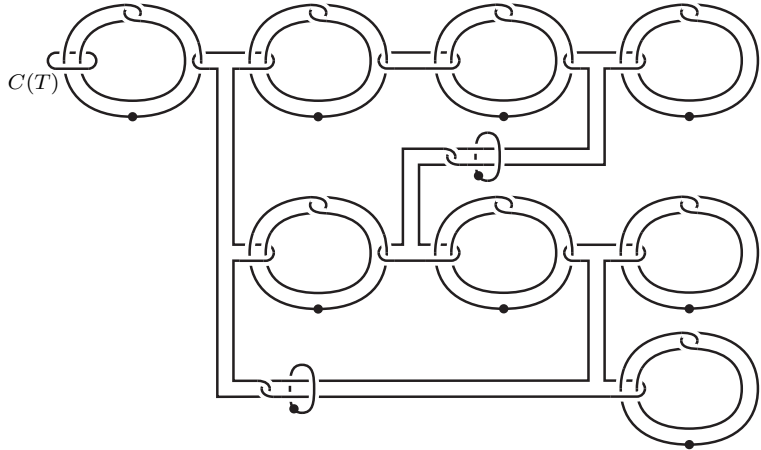}
  \caption{A Kirby diagram of a distorted Casson tower of height~4.}
  \label{figure:distorted-casson-tower-diagram}
\end{figure}

\subsubsection*{Elimination of all 2-handles}

In case of a (non-distorted) Casson tower, it is well known that one
can eliminate 1- and 2-handles in pairs to obtain a Kirby diagram
without 2-handles.  A standard procedure, which is a ``top-to-bottom''
elimination, is as follows.  Start with a diagram such as
Figure~\ref{figure:casson-tower-diagram} (for which this procedure
will be ``right-to-left'').  Slide each rightmost 1-handle, which is
associated to a self-intersection of the top stage disc, under the
adjacent 1-handle on its left.  Then eliminate the 1-handle under
which it slid, together with the linking 2-handle.  Iterate this, to
obtain a Kirby diagram with $k$ 1-handles, where $k$ is the number of
self-intersections of the final stage discs.  In fact $C(T)$ remains
as an unknotted circle, and the 1-handles form a ramified iterated
Whitehead double of a meridian of~$C(T)$.

For a distorted Casson tower diagram, the plumbings performed between
2-handles as in Figure~\ref{figure:plumbing-kirby-diagram} may prevent
the elimination of a 1-handle and a linking 2-handle in the above
procedure.  Instead, in order to obtain a Kirby diagram of a distorted
Casson tower without 2-handles, we will perform the ``bottom-to-top''
elimination discussed below, which is ``left-to-right'' in case
of Figure~\ref{figure:distorted-casson-tower-diagram}.

For this elimination the (well known) modification shown in
Figure~\ref{figure:whitehead-slide-move} is useful; $R$ denotes an
arbitrary tangle diagram, and the hatched band designates parallel
strands.  The box with label $-2$ designates two negative full twists.
(If we had a negative clasp in the first picture, we would have $+2$
instead.)  The first step in Figure~\ref{figure:whitehead-slide-move}
is an isotopy which straightens the 1-handle, and the second step is
handle sliding and cancellation.

\begin{figure}[H]
  \labellist
  \footnotesize
  \pinlabel{$0$} at 100 119
  \pinlabel{$0$} at 255 118
  \pinlabel{$-2$} at 188 49
  \pinlabel{$-2$} at 188 118
  \normalsize
  \pinlabel{$R$} at 14 96
  \pinlabel{$R$} at 157 96
  \pinlabel{$R$} at 157 27
  \endlabellist
  \includegraphics[scale=.95]{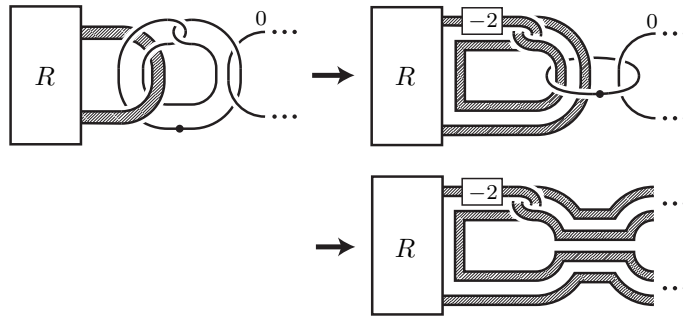}
  \caption{A modification of a Kirby diagram.}
  \label{figure:whitehead-slide-move}
\end{figure}

Now start with a Kirby diagram drawn as in
Figure~\ref{figure:distorted-casson-tower-diagram}.  First apply the
move in Figure~\ref{figure:whitehead-slide-move} to the leftmost part
to eliminate the leftmost 1-handle, and a 2-handle linking it.  By
this $C(T)$ becomes the Whitehead double of the attaching circle of
the eliminated 2-handle; see
Figure~\ref{figure:elimination-in-casson-tower-diagram}.  Repeatedly
apply the move in Figure~\ref{figure:whitehead-slide-move} to
eliminate the next leftmost 1-handles and 2-handles in pairs.
Eventually we obtain a Kirby diagram with $k$ 1-handles and no
2-handles, where $k$ is the number of intersections of the top stage
discs and stage $\ge 2$ discs.  For instance, if we start with
Figure~\ref{figure:distorted-casson-tower-diagram}, we have $k=5$
since there are 3 self-intersections of stage 4 discs, and 2
``distorting'' intersections between stage 4 and lower stage discs.
See Figures~\ref{figure:elimination-in-casson-tower-diagram-2}
and~\ref{figure:elimination-in-casson-tower-diagram-3}.

A consequence of this is that a distorted Casson handle is
diffeomorphic to the boundary connected sum of $k$ copies of
$S^1\times D^3$.

\begin{figure}[H]
  \labellist
  \footnotesize
  \pinlabel{$C(T)$} at -2 78
  \endlabellist
  \includegraphics[scale=.95]{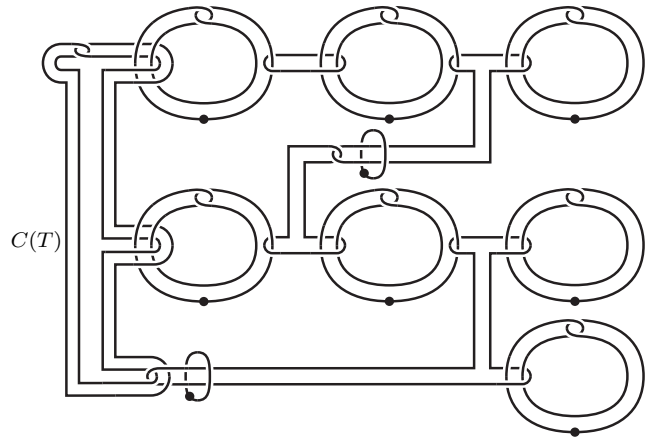}
  \caption{A distorted Casson tower diagram with a 1-handle and a
    2-handle eliminated.}
  \label{figure:elimination-in-casson-tower-diagram}
\end{figure}

\begin{figure}[H]
  \labellist
  \footnotesize
  \pinlabel{$C(T)$} at 5 9
  \pinlabel{$-2$} at 36 206
  \pinlabel{$-2$} at 35 152
 \endlabellist
  \includegraphics{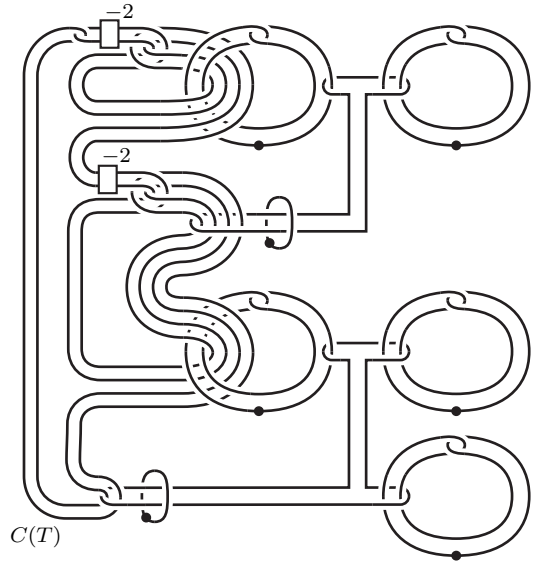}
  \caption{Further elimination of handles in a distorted Casson tower
    diagram.}
  \label{figure:elimination-in-casson-tower-diagram-2}
\end{figure}

\begin{figure}[H]
  \labellist
  \footnotesize
  \pinlabel{$C(T)$} at 5 46
  \pinlabel{$-2$} at 43 285
  \pinlabel{$-2$} at 131 285
  \pinlabel{$-2$} at 35 166
  \pinlabel{$-2$} at 131 166
  \endlabellist
  \includegraphics[scale=.9]{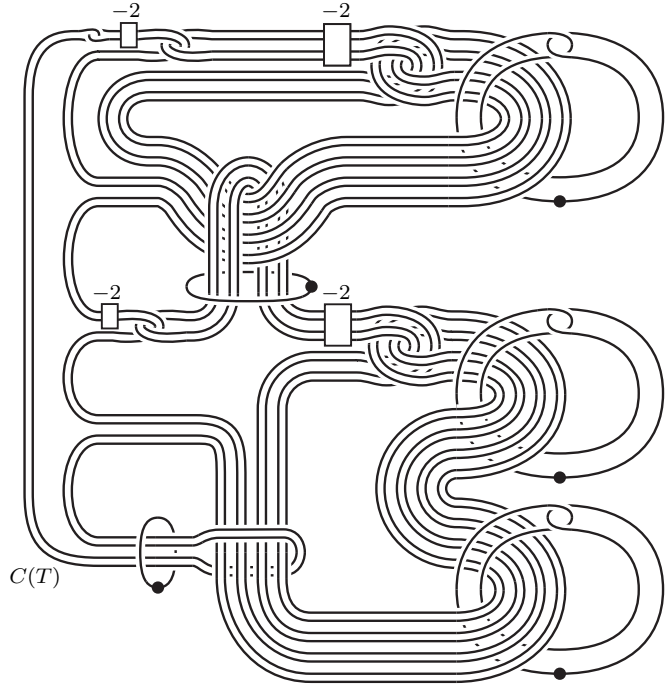}
  \caption{A distorted Casson tower diagram with all 2-handles
    eliminated.}
  \label{figure:elimination-in-casson-tower-diagram-3}
\end{figure}

\subsection{Distorted 4-fold iterated ramified Whitehead doubles}
\label{subsection:distorted-iterated-ramified-whitehead-double}

In this subsection we give a new family of slice links.  The main
ingredients are Theorem~\ref{theorem:main} on distorted Casson towers
of height~4 and the Kirby diagrams we obtained in
Section~\ref{subsection:kirby-diagram-for-distorted-casson-tower}.
First we begin with a general construction of links, without requiring
a distorted Casson tower.  Then we will relate such a link to a
distorted Casson tower, by connecting combinatorial choices involved
in the construction of the link to intersection data of the
corresponding distorted Casson tower.

\subsubsection*{Construction of links}

We start with the split union of arbitrary number of Hopf
links.  Choose one component from each Hopf link, and denote the union
of the chosen components by~$L_1$.  Denote the union of the other
components by~$L_2$.  In what follows,
Whitehead doubles and parallels are always untwisted, and taken in a
tubular neighbourhood which is thin enough to be disjoint from anything
we have considered previously.  Also, a band sum is always assumed to
be between components of split sublinks along a ``straight'' band;
more precisely, whenever two components $J$ and $J'$ of a link are
joined by a band, there is a separating 2-sphere $S$ in $S^3$ disjoint
from the link, and the band passes through $S$ exactly once and is
disjoint from anything we have considered previously.  This determines
the result of the band sum uniquely up to isotopy.  Now the
construction is described below.

\begin{enumerate}
\item\label{step1-link-construction} Replace each component of $L_2$
  with~$\Wh(L_2)$.  Perform some band sum operations to combine
  distinct components of~$L_1$ and call the result $L_1'$.  Remember a
  meridian of each component of $L_1'$ for later use, without adding
  it to the link.
\item\label{step2-link-construction} Replace $L_1'$ with $\Wh(L_1')$,
  perform some band sum operations to combine distinct components of
  $\Wh(L_1')$, and call the result~$L_1''$.  The sublink $\Wh(L_2)$ is
  left unchanged.  Remember a meridian of~$L_1''$.
\item\label{step3-link-construction} Perform (2) once again for
  $L_1''$ in place of $L_1'$ and call the result~$L_1'''$.  Remember a
  meridian of each component of $L_1'''$ for later use.
\item\label{step4-link-construction} Perform (2) once again for
  $L_1'''$ in place of~$L_1'$.  This time we perform band sum
  operations on $\Wh(L_1''')$ until we obtain a knot, say~$J$.
\item\label{step5-link-construction} Perform the following operation
  some number of times: choose a remembered meridian of a component of
  $L_1'$ and a remembered meridian of a component of either $L_1'$,
  $L_1''$ or~$L_1'''$.  Band sum them, add a meridional circle of the
  band to our link, and modify $J$ by performing $\pm1$ surgery on the
  banded together meridians, then $\mp 1$ surgery on each of the
  meridians individually.  This introduces a clasp between strands
  enclosed by the meridians.  If the same meridian is chosen more than
  once during the iteration, use a parallel copy.
\end{enumerate}

The final outcome is the union of $\Wh(L_2)$ and $J$ modified in
Step~$($\ref{step5-link-construction}$)$.  Remembered meridians are
not included.

Maybe our construction is best understood by an example: see
Figure~\ref{figure:slice-link-construction}.

\begin{figure}[H]
  \labellist
  \small
  \pinlabel{$L_1$} at 5 378
  \pinlabel{$L_2$} at 48 378
  \pinlabel{(1)} at 66 354
  \pinlabel{$L_1'$} at 98 378
  \pinlabel{$\Wh(L_2)$} at 147 408
  \pinlabel{(2)} at 163 354
  \pinlabel{$L_1''$} at 194 378
  \pinlabel{$\Wh(L_2)$} at 256 412
  \pinlabel{(3)} at 277 354
  \pinlabel{$L_1'''$} at 308 390
  \pinlabel{$\Wh(L_2)$} at 405 389
  \pinlabel{(4)} at 245 268
  \pinlabel{$J$} at 10 220
  \pinlabel{$\Wh(L_2)$} at 170 184
  \pinlabel{$\mu_1$} at 38 197
  \pinlabel{$\mu_2$} at 38 175
  \pinlabel{$\mu_3$} at 97 16
  \pinlabel{$\mu_4$} at 40 20
  \pinlabel{(5)} at 208 131
  \endlabellist
  \includegraphics[scale=.85]{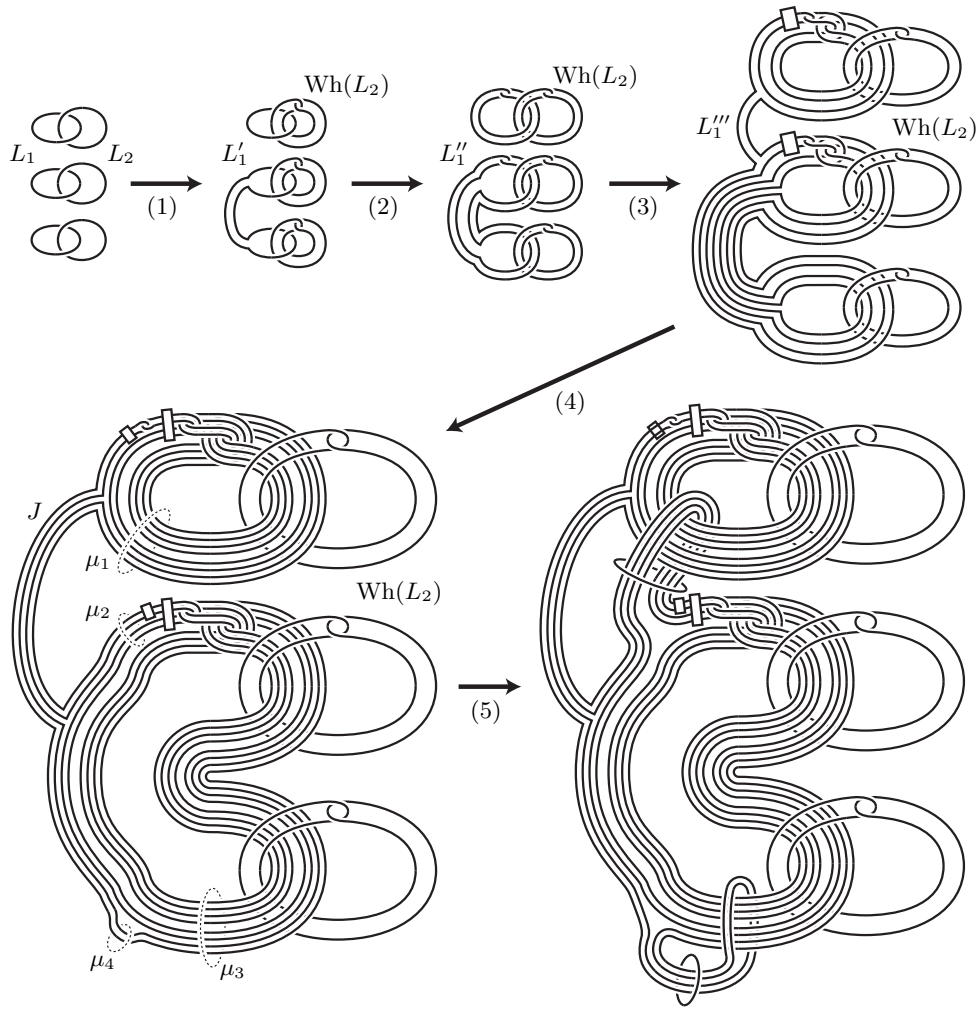}
  \caption{A construction of a slice link.  Each box designates $-2$
    full twists.  The meridians $\mu_1$, $\mu_2$, $\mu_3$, and $\mu_4$
    are those of $L_1'$, $L_1''$, $L_1'$, and $L_1'''$, respectively,
    and Step (5) is performed for the pairs $(\mu_1, \mu_2)$ and
    $(\mu_3, \mu_4)$ to obtain the last link as the final outcome.}
  \label{figure:slice-link-construction}
\end{figure}

\begin{theorem}
  \label{theorem:distorted-casson-tower-slicing}
  Any link constructed as above is slice.
\end{theorem}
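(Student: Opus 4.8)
The plan is to realise the constructed link as the boundary of a system of disjoint flat discs in $D^4$ built around a distorted Casson tower of height~$4$. Write the link as the union of $\Wh(L_2)$, the knot $J'$ obtained from $J$ by the modifications of Step~(\ref{step5-link-construction}), and the meridional circles added in Step~(\ref{step5-link-construction}). First I would exhibit $J'$ as the attaching circle $C(T)$ of a distorted Casson tower $T$ of height~$4$ embedded in $D^4$ with $\partial_-(T)\subset S^3$; then Theorem~\ref{theorem:main} produces a flat embedded disc inside $T\subset D^4$ bounded by $J'$ as a framed manifold (the preferred framing of $C(T)$ agreeing with the $0$-framing of $J'$ in $S^3$, as in the proof of Theorem~\ref{theorem:main-slice-knots}). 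It then remains to slice the other components by disjoint flat discs in $D^4\sm T$, which are automatically disjoint from the disc for $J'$.

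To build $T$, view $D^4$ as a union of nested collars $S^3\times[0,1]$ together with a small $D^4$, and construct the tower one stage at a time as in the proofs of Lemma~\ref{lemma:Wh-plumbed-handles} and Theorem~\ref{theorem:main-slice-knots}. The Whitehead doubling in Step~(\ref{step4-link-construction}) is realised by a stage~$1$ plumbed handle whose core disc is bounded by $J$ and whose double point loops are $L_1'''$, the band sums being absorbed into the core surface; iterating with Lemma~\ref{lemma:Wh-plumbed-handles} for Steps~(\ref{step3-link-construction}) and~(\ref{step2-link-construction}) gives stages~$2$ and~$3$, with double point loops $L_1''$ and then $L_1'$. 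Since $L_1'$ is obtained from the unlink $L_1$ by band sums along straight bands between split sublinks, it is again an unlink and so bounds disjoint embedded discs; push these into the small $D^4$ as a trivial stage~$4$. Finally each iteration of Step~(\ref{step5-link-construction}) band-sums a remembered meridian of a component of $L_1'$ to a remembered meridian of a component of $L_1'$, $L_1''$ or $L_1'''$ and performs the prescribed $\pm1$ and $\mp1$ surgeries. By the translation ``a plumbing $=$ a new $1$-handle together with a clasp'' recorded in Section~\ref{subsection:kirby-diagram-for-distorted-casson-tower} (Figures~\ref{figure:plumbing-kirby-diagram} and~\ref{figure:whitehead-slide-move}), this is precisely the Kirby-diagram effect of introducing a plumbing between the top (stage~$4$) disc bounded by that component of $L_1'$ and the stage~$4$, $3$ or $2$ disc bounded by the corresponding component of $L_1'$, $L_1''$ or $L_1'''$; the meridional circle added to the link is the new $1$-handle of that plumbing. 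Since every such plumbing joins the top stage to a stage of index~$\geq 2$, the result $T$ is a distorted Casson tower of height~$4$ in the sense of Definition~\ref{definition:distorted-tower}.

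For the remaining components, note that $L_2$ consists of one component of each of the split Hopf links and is therefore an unlink, so $\Wh(L_2)$ is also an unlink and the meridional circles added in Step~(\ref{step5-link-construction}) are unknots; each accordingly bounds a flat embedded disc in $D^4$. Moreover, because of the way $T$ sits in $D^4$---the components of $\Wh(L_2)$ are untwisted Whitehead doubles of meridians of the embedded top-stage discs of $T$, and the added meridians are meridians of the distorting-plumbing regions---these discs can be chosen disjoint from $T$, hence from the disc for $J'$ produced by Theorem~\ref{theorem:main} (which lies inside $T$). This yields the required system of disjoint flat slice discs for the link.

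The main obstacle is the Kirby-calculus bookkeeping behind the second and third paragraphs. One must run the handle manipulations of Section~\ref{subsection:kirby-diagram-for-distorted-casson-tower} in reverse and check carefully that the surgeries prescribed in Step~(\ref{step5-link-construction})---including the signs of the clasps and the ``$-2$'' full-twist boxes appearing in Figure~\ref{figure:whitehead-slide-move}---match distorting plumbings with the correct framings, so that $T$ genuinely satisfies Definition~\ref{definition:distorted-tower}, and one must organise the nested-collar construction so that $T$ together with the auxiliary discs for $\Wh(L_2)$ and for the added meridians is embedded disjointly in $D^4$. Once this is done, Theorem~\ref{theorem:main} supplies the disc for $J'$ and completes the proof.
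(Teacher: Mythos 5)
There is a genuine gap, and it is not the Kirby-calculus bookkeeping you flag at the end, but the third paragraph. In your construction you cap the tower off with \emph{embedded} stage-4 discs bounded by the unlink $L_1'$, so the components of $\Wh(L_2)$ play no role in $T$ and must then be sliced ``by hand'' in the complement, together with the added meridians. Your only justification is that these curves are Whitehead doubles of meridians of the embedded top-stage discs (resp.\ meridians of the plumbing regions) and hence ``can be chosen disjoint from $T$''. That assertion is unproven, and it is exactly where the entire difficulty of the theorem lives: the general principle it relies on is false, since the untwisted Whitehead double of a meridian of a slice disc need not bound a flat embedded disc in the complement of that disc --- otherwise the Whitehead link (unknot together with the double of its meridian, sliced using the standard disc) would be topologically slice, which it is not. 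Similarly, a meridian of a distorting plumbing is a $1$-handle curve of $T$; that it bounds a flat disc in the closed complement of \emph{your} embedded copy of $T$, disjointly from the $\Wh(L_2)$-discs and from each other, is precisely the kind of statement that needs the identification you defer to ``bookkeeping''. Since the disc for $J'=C(T)$ produced by Theorem~\ref{theorem:main} is an uncontrolled disc somewhere inside $T$, disjointness really does force you to find these auxiliary discs in $D^4\sm T$, and no argument is given.

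The paper avoids this entirely by a different bookkeeping of roles: the components of $\Wh(L_2)$ are \emph{not} extra curves to be sliced in the complement, but correspond bijectively to self-plumbings of the stage-4 discs (one pre-stage-4 disc with a single self-plumbing per component of $\Wh(L_2)$), and the added meridians correspond to the distorting plumbings. After the bottom-to-top elimination of Section~\ref{subsection:kirby-diagram-for-distorted-casson-tower}, the whole constructed link is identified with $C(T)$ together with the dotted $1$-handle circles of a $2$-handle-free Kirby diagram of a distorted height-$4$ tower $T$. Then, as in Lemma~\ref{lemma:distorted-casson-tower-slicing}, $T$ embeds in $D^4$ as the exterior of the \emph{standard} discs bounded by the dotted circles, so those components are sliced for free and disjointness from the Freedman disc for $C(T)$ inside $T$ is automatic. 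Your proposal matches the paper's identification of Step~(5) with distorting plumbings and of the lower stages with Lemma~\ref{lemma:Wh-plumbed-handles}, but by taking a trivial embedded top stage you discard the self-plumbings that encode $\Wh(L_2)$, and the resulting obligation to slice $\Wh(L_2)$ and the meridians away from $T$ is a step that would fail as argued, or at best is as hard as the theorem itself.
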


By Theorem~\ref{theorem:distorted-casson-tower-slicing}, the last link
in Figure~\ref{figure:slice-link-construction} is slice.  As another
example which is simpler, the link in
Figure~\ref{figure:distorted-tower-slice-link} is slice.  Thus
Theorem~\ref{theorem:main-distorted-casson-tower-link-example} is a
consequence of Theorem~\ref{theorem:distorted-casson-tower-slicing}.
Indeed, the link in Figure~\ref{figure:distorted-tower-slice-link} is
obtained by applying the above construction to a distorted Casson
tower of height 4 which has one plumbed handle with one self-plumbing
at each stage and has one distorting intersection between the stage 4
and stage 2 discs.

\begin{proof}[Proof of Theorem~\ref{theorem:distorted-casson-tower-slicing}]

  We claim that a link $L$ obtained by the above construction is the
  union of the curve $C(T)$ and the dotted circles representing
  1-handles in a Kirby diagram of a height 4 distorted Casson tower
  $T$ without 2-handles.  For instance, observe that
  Figure~\ref{figure:elimination-in-casson-tower-diagram-3} and the
  final picture in Figure~\ref{figure:slice-link-construction} are
  identical.  In fact, our construction of $L$ corresponds to a
  top-to-bottom construction of a distorted Casson tower, as follows.
  For each component of $\Wh(L_2)$ in
  Step~(\ref{step1-link-construction}), take a disc with a single
  local self-plumbing, which we call a pre-stage-4 disc.  Whenever we
  perform band sum of components of $L_1$ in Step
  (\ref{step1-link-construction}), take a boundary connected sum of
  the associated pre-stage-4 discs.  The resulting discs with
  (multi-)self-plumbings are the stage 4 discs of our tower $T$.  In
  Step (\ref{step2-link-construction}), whenever we take a Whitehead
  double of a component, take a disc with a single local
  self-plumbing, which we call a pre-stage-3 disc, and attach the
  associated stage 4 disc to the pre-stage-3 disc along the double
  point loop.  Again, whenever we perform a band sum, take the
  boundary connected sum of the pre-stage-3 discs.  The result is
  stage 3 discs with the stage 4 discs attached.  Continue in the same
  way for steps (\ref{step3-link-construction}) and
  (\ref{step4-link-construction}) to produce stages 2 and~1.  We
  arrive at a non-distorted Casson tower of height~4.  Finally, for
  each triple of $\pm1$ surgeries occurring in
  step~(\ref{step5-link-construction}), plumb a stage 4 disc to a
  stage 4, 3 or 2 disc, where the choice of the meridians determines
  which discs to plumb.  The 2-handle elimination procedure described
  in Section~\ref{subsection:kirby-diagram-for-distorted-casson-tower}
  applies to the standard Kirby diagram of the resulting distorted
  Casson tower, from which the claim follows.  The lemma stated below
  now completes the proof.
\end{proof}

\begin{lemma}
  \label{lemma:distorted-casson-tower-slicing}
  Let $L$ be the union of the curve $C(T)$ and the dotted circles
  representing 1-handles in a Kirby diagram of a height 4 distorted
  Casson tower $T$ without 2-handles.  Then, as a link in $S^3$, $L$
  is slice.
\end{lemma}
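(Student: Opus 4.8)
The plan is to read off disjoint slice discs for the components of $L$ directly from the handle picture, using Theorem~\ref{theorem:main} to supply the disc bounded by $C(T)$. The first thing to pin down is how the $2$-handle-free diagram exhibits $T$ inside the $4$-ball. The diagram produced in Section~\ref{subsection:kirby-diagram-for-distorted-casson-tower} has only $1$-handles, drawn as dotted circles $\gamma_1,\dots,\gamma_k$, together with the marked curve $C(T)$, all lying in $S^3=\partial D^4$ and with $C(T)$ disjoint from the $\gamma_i$. By the meaning of the dotted-circle notation, $T$ is recovered from $D^4$ by deleting an open tubular neighbourhood $\nu(\Delta)$ of a collection $\Delta=\Delta_1\sqcup\cdots\sqcup\Delta_k$ of pairwise disjoint, properly and flatly (indeed smoothly) embedded discs in $D^4$ with $\partial\Delta_i=\gamma_i$, so that $T=D^4\sm\bigsqcup_i\Int\nu(\Delta_i)$; this is consistent with the observation at the end of Section~\ref{subsection:kirby-diagram-for-distorted-casson-tower} that such a diagram presents $\natural^k(S^1\times D^3)$. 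In this description $T\subset D^4$, the attaching region $\partial_-(T)\supset C(T)$ sits in $S^3=\partial D^4$, and the interior of each $\Delta_i$ is disjoint from $T$.

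With this in hand I would simply invoke Theorem~\ref{theorem:main}. Since $T$ is a distorted Casson tower of height~$4$, that theorem produces a flat topologically embedded disc $D$ in $T$ with $\partial D=C(T)$; after a boundary-fixing isotopy we may assume $\Int D\subset\Int T$, so, viewing $T\subset D^4$, the disc $D$ is properly and flatly embedded in $D^4$ with $\partial D=C(T)\subset S^3$. Crucially, $D\subset T$ forces $D$ to be disjoint from every $\Delta_i$, while in $S^3$ the circle $\gamma_i=\partial\Delta_i$ is disjoint from $C(T)=\partial D$. Hence $D\sqcup\Delta_1\sqcup\cdots\sqcup\Delta_k$ is a system of pairwise disjoint, properly and flatly embedded discs in $D^4$ with total boundary $C(T)\sqcup\gamma_1\sqcup\cdots\sqcup\gamma_k=L$, which is precisely the statement that $L$ is slice. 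Since the conclusion concerns the underlying link in $S^3$, the framings of $C(T)$ and of the dotted circles play no role, and the framed refinement in Theorem~\ref{theorem:main} is not needed here.

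I do not anticipate a real difficulty: the content of the argument is entirely in Theorem~\ref{theorem:main}. The one step that needs care is the dictionary in the first paragraph between the $2$-handle-free Kirby diagram and the embedding $T\hookrightarrow D^4$ as the complement of the disc system $\Delta$ bounded by the $1$-handle dotted circles --- in particular that the $\Delta_i$ may be chosen pairwise disjoint, properly embedded and disjoint from $T$ --- but this is standard Kirby calculus and is already implicit in the ``bottom-to-top'' handle elimination of Section~\ref{subsection:kirby-diagram-for-distorted-casson-tower} used to produce the diagram. One should also check that the curve labelled $C(T)$ in the eliminated diagram really is (isotopic in $S^3$ to) the attaching circle of the distorted Casson tower $T$, so that Theorem~\ref{theorem:main} genuinely applies; this is clear because the handle cancellations performed there are diffeomorphisms of $T$ fixing $\partial_-(T)$ up to isotopy.
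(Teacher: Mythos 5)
Your proposal is correct and is essentially the paper's own argument: the paper likewise reads the $2$-handle-free diagram as exhibiting $T$ as the exterior in $D^4$ of the standard slicing discs bounded by the dotted circles, invokes Theorem~\ref{theorem:main} to produce a flat disc in $T$ bounded by $C(T)$, and notes that this disc is automatically disjoint from the $\Delta_i$. Your write-up just spells out the Kirby-calculus dictionary and the framing remark in more detail than the paper's three-sentence proof.
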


\begin{proof}
  The Kirby diagram without 2-handles determines an embedding of $T$
  into the 4-ball, to wit, $T$ is the exterior of the standard slicing
  discs $\Delta_i$ bounded by the dotted circles (which form a trivial
  link).  By Theorem~\ref{theorem:main}, the curve $C(T)$ bounds a
  flat disc $\Delta$ in~$T$.  As $\Delta$ is disjoint from the discs
  $\Delta_i$, $L$ is slice.
\end{proof}

The above lemma also applies to give another family of slice links:
recall that Theorem~\ref{theorem:main-ramified-wh4} in the
introduction states that \emph{any ramified $\Wh_n$ link is slice for
  $n\ge 4$}.

\begin{proof}[Proof of Theorem~\ref{theorem:main-ramified-wh4}]
  It suffices to show that any ramified $\Wh_4$ link is slice.  Recall
  that a ramified $\Wh_4$ link $L$ is the union of $C(T)$, and the
  dotted circles in a Kirby diagram obtained by a top-to-bottom
  elimination of 2-handles applied to the standard Kirby diagram of a
  (non-distorted) Casson tower of height 4; here $C(T)$ remains as an
  unknotted circle and the other components form a 4th iterated
  ramified Whitehead double of a meridian of~$C(T)$.  By
  Lemma~\ref{lemma:distorted-casson-tower-slicing}, $L$ is slice.
\end{proof}

\section{The grope filtration of knots and Casson towers of height~3}
\label{section:casson-tower-3-infinite-grope}

In this section we make the observation that we can use the improved
initial hypothesis in the Grope Height Raising Lemma to slightly
extend results from~\cite{Ray-2013-1} on the grope filtration of the
knot concordance group. The grope filtration first appeared in the
literature in~\cite{Cochran-Teichner:2003-1}, although it was already
implicit in~\cite{Cochran-Orr-Teichner:1999-1}.  By definition a knot
in $S^3$ lies in the $n$th term $\mathcal{G}^{(n)}$ of the filtration
if it bounds an embedded framed grope $G_n$ in $D^4$.  Ray shows that
a knot in $S^3$ which bounds a Casson tower of height~3 is
$(n)$-solvable for all $n$.  This follows from the corollary below,
by~\cite[Theorem~8.11]{Cochran-Orr-Teichner:1999-1}.  She also shows
that a knot which bounds a Casson tower of height $n$ bounds a grope
of height $n$~\cite[Theorem~A~(i)]{Ray-2013-1}.  In fact, a height~3
Casson tower is enough to obtain this conclusion for all $n$.

\begin{corollary}\label{cor:grope-filtration}
  A Casson tower $T_3$ of height 3 contains an embedded framed grope
  $G_n$ of height $n$, with the same attaching circle as the Casson
  tower, for all $n$.
\end{corollary}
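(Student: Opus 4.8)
The plan is to produce a properly immersed \emph{capped} grope inside $T_3$, raise its height arbitrarily using the sharpened Grope Height Raising Lemma, and then simply forget the caps. First I would apply Lemma~\ref{lemma:capped-grope-in-casson-tower} to the height $3$ Casson tower $T_3$; this produces a properly immersed capped grope $G_2^c$ of height $2$ inside $T_3$ whose base circle equals $C(T_3)$ as framed circles, and whose body is the embedded framed grope $G_2$ for the first two stages of $T_3$ furnished by Proposition~\ref{proposition:grope-in-casson-tower}.

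Next I would raise the height. For any prescribed $n$, Remark~\ref{remark:grope-height-raising-from-n} allows the Grope Height Raising Lemma~\ref{lemma:grope-height-raising} to be applied directly to the height $2$ capped grope $G_2^c$ (equivalently, one may first contract $G_2^c$ to a properly immersed capped grope of height $1.5$ and invoke Lemma~\ref{lemma:grope-height-raising} as stated). The output is a properly immersed capped grope $G_n^c$ of height $n$ lying inside a regular neighbourhood of $G_2^c$, hence inside $T_3$, and having the same framed boundary $C(T_3)$. Finally I would take the body of $G_n^c$: since the immersion is proper, the body is an embedded grope of height $n$ carrying the canonical framing coming from its $\R^4$-model (Definition~\ref{definition:framing-of-gropes}), with the induced framing on its base circle equal to that of $C(T_3)$. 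This is the desired embedded framed grope $G_n$ of height $n$ in $T_3$ with attaching circle $C(T_3)$. For small values $n \le 3$ one can alternatively apply Proposition~\ref{proposition:grope-in-casson-tower} to $T_3$ directly and then contract.

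I do not expect any genuine obstacle here: all the substantive work has already been carried out in Lemma~\ref{lemma:capped-grope-in-casson-tower} and, crucially, in the Grope Height Raising Lemma~\ref{lemma:grope-height-raising}, whose optimal hypothesis---height $1.5$ rather than the higher thresholds available in~\cite{Freedman-Quinn:1990-1}---is exactly what makes the argument succeed starting from a height $3$ Casson tower, whereas Ray's construction from a height $3$ tower yields only a height $3$ grope. The only point requiring a little care is tracking framings through the height-raising process, but this is handled automatically by the framed capped grope formalism of Section~\ref{section:definitions}.
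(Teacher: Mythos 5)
Your proposal is correct and follows essentially the same route as the paper: build a properly immersed height~2 capped grope inside the height~3 tower (via Ray's grope plus capping by the third-stage core discs, as in the proof of Theorem~\ref{theorem:main}), apply the Grope Height Raising Lemma~\ref{lemma:grope-height-raising}, and then discard the caps, noting that a properly immersed capped grope has embedded body. The framing bookkeeping you mention is handled exactly as in the paper's framed capped grope formalism, so there is nothing further to add.
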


\begin{proof}
  Apply Proposition~\ref{proposition:grope-in-casson-tower} to
  construct a properly immersed framed capped grope of height~2 inside
  $T_3$, as described in the beginning of the proof of
  Theorem~\ref{theorem:main}\@.  Apply grope height raising, as in
  Lemma~\ref{lemma:grope-height-raising}, to obtain a properly
  immersed framed capped grope of height~$n$, and then ignore the
  caps.
\end{proof}

It is interesting to contrast Corollary~\ref{cor:grope-filtration}
with Theorem~\ref{theorem:main-height-3}.

As mentioned above, Ray showed in \cite{Ray-2013-1} that a link in
$S^3$ which bounds a Casson tower of height~3 in~$D^4$ is
$(n)$-solvable, in the sense of \cite{Cochran-Orr-Teichner:1999-1},
for all~$n$.  Corollary~\ref{cor:grope-filtration} shows that the link
also lies in the intersection of the grope filtration.  A link which
bounds a grope of height $n+2$ is
$(n)$-solvable~\cite[Theorem~8.11]{Cochran-Orr-Teichner:1999-1}, but
by~\cite[Corollary~6.8]{Otto:2014-1} the converse does not hold for
links of at least $2^{n+2}$ components. So for finite terms of the
filtrations, having a height $n+2$ embedded grope is stronger than
having an $(n)$-solution.  As observed in \cite{Ray-2013-1}, we can
deduce the existence of an $(n)$-solution from the existence of an
immersed grope of height $n+2$ with the bottom two stages embedded.
It is not known whether the infinite intersections of the filtrations
coincide.

\bibliographystyle{amsalpha}
\def\MR#1{}
\bibliography{research}

\end{document}